\documentclass{amsart}
\usepackage{amsmath}
\usepackage{amssymb}
\usepackage{latexsym}
\usepackage{verbatim}
\usepackage{graphics}
\usepackage{amsfonts}
\usepackage{graphicx}
\usepackage{epstopdf}
\DeclareGraphicsRule{.tif}{png}{.png}{`convert #1 `basename #1 .tif`.png}
\theoremstyle{plain}

\newtheorem{prop}{Proposition}
\newtheorem{lem}{Lemma}

\newtheorem{claim}{Claim}

\def\lk{{\rm lk}}

\theoremstyle{remark}

\newtheorem*{question}{Question}
\newtheorem*{rem}{Remark}

\setlength{\textwidth}{6.5in} \setlength{\oddsidemargin}{0pt}
\setlength{\evensidemargin}{0pt} \setlength{\topmargin}{0pt}
\setlength{\textheight}{8.5in}

    \setcounter{topnumber}{2}
    \setcounter{bottomnumber}{2}
    \setcounter{totalnumber}{4}     
    \setcounter{dbltopnumber}{2}    


\begin{document}

\title{Counting links and knots in complete graphs}
\date{\today}

\author{Loren Abrams}
\email{loren.abrams@gmail.com}

\author{Blake Mellor}
\address{Mathematics Department\\
   	Loyola Marymount University\\
   	Los Angeles, CA  90045-2659}
\email{blake.mellor@lmu.edu}

\author{Lowell Trott}
\address{Department of Computer Science\\
	Bren School of Information and Computer Sciences\\
	University of California, Irvine 92697}
\email{ltrott@uci.edu}

\keywords{intrinsically linked graphs, intrinsically knotted graphs, spatial graphs}
\subjclass[2000]{05C10; 57M25}

\thanks{This research was supported in part by NSF grant DMS - 0905687.}

\begin{abstract}

We investigate the minimal number of links and knots in embeddings of complete partite graphs in $S^3$.  We provide exact values or bounds on the minimal number of links for all complete partite graphs with all but 4 vertices in one partition, or with 9 vertices in total.  In particular, we find that the minimal number of links in an embedding of $K_{4,4,1}$ is 74.  We also provide exact values or bounds on the minimal number of knots for all complete partite graphs with 8 vertices.

\end{abstract}

\maketitle



\section{Introduction} \label{S:intro}

The study of links and knots in spatial graphs began with Conway and Gordon's seminal result that every embedding of $K_6$ in $S^3$ contains a non-trivial link and every embedding of $K_7$ in $S^3$ contains a non-trivial knot \cite{cg}.  Their result sparked considerable interest in {\it intrinsically linked} and {\it intrinsically knotted} graphs -- graphs with the property that every embedding in $\mathbb{R}^3$ contains a pair of linked cycles (respectively, a knotted cycle).  Robertson, Seymour and Thomas \cite{rst} gave a Kuratowski-type classification of intrinsically linked graphs, showing that every such graph contains one of the graphs in the {\it Petersen family} as a minor (see Figure~\ref{F:petersen}).  There is, as yet, no such classification for intrinsically knotted graphs; and since there are dozens of known minor-minimal intrinsically knotted graphs (see \cite{fo2, fo3, ks}), any such classification will be far more complex.

    \begin{figure} [htpb]
    $$\includegraphics{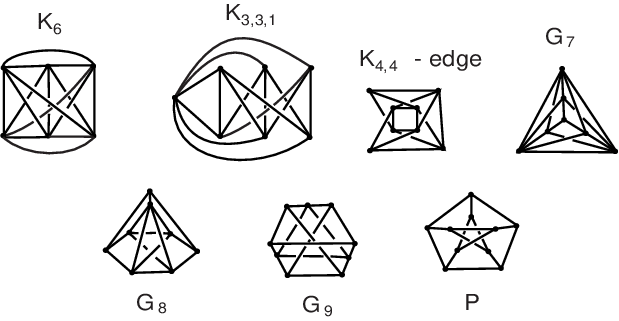}$$
    \caption{The Petersen family of graphs} \label{F:petersen}
    \end{figure} 

However, while Robertson, Seymour and Thomas answered the question of {\it which} graphs are intrinsically linked, they did not address {\it how} they are linked, and how complicated the linking must be.  In this paper, we measure the ``complexity" of a graph with respect to intrinsic linking (respectively, intrinsic knotting) by the minimal number of links (respectively, knots) in any embedding of the graph (denoted $mnl(G)$ or $mnk(G)$).

This is not the only possible measure of complexity.  Rather than counting the number of links or knots, one could focus instead on the complexity of the individual links or knots.  Flapan \cite{fl} has given examples of graphs which must contain links with large linking numbers and knots with large $a_2$ (the second coefficient of the Conway polynomial), and Flapan, Foisy, Naimi and Pommersheim \cite{ffnp} constructed graphs whose embeddings must contains links with many components.  Recently, the second author, with Flapan and Naimi \cite{fmn}, has generalized these results to show that there are graphs whose embedding must contain a link which is arbitrarily complex as measured by {\it both} the pairwise linking numbers {\it and} the size of the second coefficient of the Conway polynomial of the components.

The notion of $mnl(G)$ was introduced by Tom Fleming and the second author in \cite{fm}, where they investigated the minimum number of links in complete partite graphs on 7 or 8 vertices.   We extend this investigation to complete partite graphs on 9 vertices, and also for several general families of complete partite graphs.  We also investigate the minimum number of knots in complete partite graphs on 8 vertices.  The only previous results in this area are bounds given by Hirano \cite{hi} (improving on results of Blain et. al. \cite{bbhlf}) for the minimal number of knotted Hamiltonian cycles in $K_8$.

In general, finding the minimum number of links or knots in a graph requires determining both a lower bound and and upper bound, and then working to bring these bounds together.  Upper bounds are established by examining particular embeddings and counting the number of links (or knots) in the embedding.  While simple in theory, this is very difficult in practice -- even relatively simple graphs can have hundreds or thousands of cycles which need to be checked; and each time the embedding is changed in hopes of reducing the number of links or knots, the computation must be repeated.  Clearly, this task is best done by a computer, and much of our effort has been to develop a program {\em Gordian} \cite{amt} to do these computations.  The use inputs a file containing the crossing data for the embedding, and the program will then find all pairs of linked cycles with nontrivial linking number and all knotted cycles where the second coefficient of the Conway polynomial is non-zero.  For our purposes, these invariants were largely sufficient.

Lower bounds are generally determined by looking for subgraphs for which the minimum number of links (or knots) is known.  Of course, a given link (or knot) may appear in several different subgraphs, so the combinatorial analysis can become quite complex.  In this paper, the most ambitious example is the proof that the minimum number of links for $K_{4,4,1}$ is 74.

In Section \ref{S:prelim}, we provide definitions and notation, and recall some useful results from \cite{fm}.  In Section \ref{S:general}, we determine the minimum number of links in complete partite graphs with all but four vertices in one partition (in the case of $K_{n,1,1,1,1}$ we find upper and lower bounds); these results are summarized in Table \ref{Ta:general}.

\begin{table} [htpb]
	\begin{center}
	\begin{tabular}{|l|c|}
		\hline
		$G$ & $mnl(G)$ \\ \hline
		$K_{n,4} {\rm \ or\ } K_{n,2,2}$ & $2\dbinom{n}{4}$ \\ \hline
		$K_{n,3,1} {\rm \ or\ } K_{n,2,1,1}$ & $\dbinom{n}{3} + 2\dbinom{n}{4}$ \\ \hline
		$K_{n,1,1,1,1}$ & $\displaystyle{2\dbinom{n}{4} + 2\dbinom{n}{3} + \delta,\ {\rm where\ } \left\lceil\frac{n^2-n}{6}\right\rceil \leq \delta \leq  \left\lceil\frac{n^2-2n}{4}\right\rceil}$ \\ \hline
	\end{tabular}
	\end{center}
	\caption{Minimum number of links for some families of complete partite graphs} \label{Ta:general}
\end{table}

In Section \ref{S:9vertex} we find exact values or upper bounds for the minimum number of links in all intrinsically linked complete partite graphs on 9 vertices.  In particular, in Section \ref{SS:K441} we prove our most difficult result: that the minimum number of links for $K_{4,4,1}$ is 74.  The results are summarized in Table \ref{Ta:9vertex}.  We only list the intrinsically linked graphs.

\begin{table} [htpb]
	\begin{center}
	\begin{tabular}{| l | c | c | l | c |}
		\cline{1-2} \cline{4-5}
		$G$ & $mnl(G)$ & & $G$ & $mnl(G)$\\ \cline{1-2} \cline{4-5}                                                                                                                                                                                                                                                                                                                                                                                                                               
		$K_{5,4}$ & 10 & & $K_{3,3,3}$ & $\leq 248$ \\ \cline{1-2} \cline{4-5}
		$K_{5,3,1}$ & 20 & & $K_{3,3,2,1}$ & $\leq 386$\\ \cline{1-2} \cline{4-5}
		$K_{5,2,2}$ & 10 & & $K_{3,3,1,1,1}$ & $\leq 555$\\ \cline{1-2} \cline{4-5}
		$K_{5,2,1,1}$ & 20 & & $K_{3,2,2,2}$ & $\leq 372$\\ \cline{1-2} \cline{4-5}
		$K_{5,1,1,1,1}$ & 34 & & $K_{3,2,2,1,1}$ & $\leq 610$ \\ \cline{1-2} \cline{4-5}
		$K_{4,4,1}$ & 74 & & $K_{3,2,1,1,1,1}$ & $\leq 962$\\ \cline{1-2} \cline{4-5}
		$K_{4,3,2}$ & $\leq 120$ & & $K_{3,1,1,1,1,1,1}$ & $\leq 1432$ \\ \cline{1-2} \cline{4-5}
		$K_{4,3,1,1}$ & $\leq 164$ & & $K_{2,2,2,2,1}$ & $\leq 1098$ \\ \cline{1-2} \cline{4-5}
		$K_{4,2,2,1}$ & $\leq 178$ & & $K_{2,2,2,1,1,1}$ & $\leq 1576$ \\ \cline{1-2} \cline{4-5}
		$K_{4,2,1,1,1}$ & $\leq 244$ & & $K_{2,2,1,1,1,1,1}$ & $\leq 2139$\\ \cline{1-2} \cline{4-5}
		$K_{4,1,1,1,1,1}$ & $\leq 360$ & & $K_{2,1,1,1,1,1,1,1}$ & $\leq 2918$ \\ \cline{1-2} \cline{4-5}
		\multicolumn{3}{c |}{} & $K_9$ & $\leq 3987$ \\ \cline{4-5}
	\end{tabular}
	\end{center}
	\caption{Minimum number of links for complete partite graphs on 9 vertices} \label{Ta:9vertex}
\end{table}

Finally, in Section \ref{S:8vertex}, we find exact values or upper and lower bounds for the minimum number of knots in all intrinsically knotted complete partite graphs on 8 vertices.  The results are summarized in Table \ref{Ta:8vertex} (again, we only list the intrinsically knotted graphs; all others have knotless embeddings).

\begin{table} [htpb]
	\begin{center}
	\begin{tabular}{| l | c |}
		\hline
		$G$ & $mnk(G)$ \\ \hline
		$K_{3,3,1,1}$ & 1 \\ \hline                                                                                                                                                                                                                                                                                                                                                                                                                                          
		$K_{3,2,1,1,1}$ & 1\\ \hline
		$K_{3,1,1,1,1,1}$ & $3 \leq mnk \leq 4$\\ \hline
		$K_{2,2,1,1,1,1}$ & 2\\ \hline
		$K_{2,1,1,1,1,1,1}$ & $8 \leq mnk \leq 9$ \\ \hline
		$K_8$ & $15\leq mnk \leq 29$ \\ \hline
	\end{tabular}
	\end{center}
	\caption{Minimum number of knots for complete partite graphs on 8 vertices} \label{Ta:8vertex}
\end{table}

\section{Preliminaries and Notation} \label{S:prelim}

We begin by defining some useful notation and recalling some results from \cite{fm}.  Given a graph $G$ and a particular embedding $F$ of $G$ in $S^3$, a pair of disjoint cycles in $G$ is called {\it linked in F} if the corresponding embedded loops in $F$ form a non-trivial link.  Similarly, a cycle in $G$ is {\it knotted in F} if the corresponding embedded loop in $F$ is a non-trivial knot.  We let $nl(F)$ (respectively, $nk(F)$) denote the number of pairs of linked cycles (respectively, number of knotted cycles) in $F$.  Then the {\em minimum number of links (resp., knots) in $G$}, denoted $mnl(G)$ (resp., $mnk(G)$), is the minimum value of $nl(F)$ (resp., $nk(F)$) among all embeddings of $G$ in $S^3$.  $F$ is a {\em minimal link (resp. knot) embedding} of $G$ if $nl(F) = mnl(G)$ (resp., $nk(F) = mnk(G)$).  

An {\em $(m,n)$-link} in an embedding of a graph is a link of an $m$-cycle and an $n$-cycle.  We will often refer to 3-cycles as \emph{triangles}, 4-cycles as \emph{squares}, 5-cycles as \emph{pentagons}, etc.; this is purely for convenience and does not imply that the embedded cycles are regular polygons.  We will primarily detect links using the pairwise linking number.  We will say that a two-component link is \emph{odd} if the linking number is odd, and \emph{even} if the linking number is even.

As we are dealing with complete partite graphs, we will often describe the graphs (and their subgraphs) by indicating how the vertices are partitioned.  For example, the graph $K_{3,3,1}$ may be denoted $(abc)(123)(x)$; with $(ab)(12)(x)$ denoting a subgraph isomorphic to $K_{2,2,1}$.  Cycles in a graph will be denoted using square brackets, so $[a1x]$ would denote the 3-cycle with vertices $a$, $1$ and $x$.

Given loops $C$ and $D$ in $S^3$ such that $C \cap D$ is connected (or empty), we will define $C + D = \overline{(C\cup D) - (C\cap D)}$.  The notation is motivated by the observation that, given a cycle $S$ disjoint from $C$ and $D$, $\lk(S, C+D) = \lk(S,C) + \lk(S,D)$, where $\lk$ denotes the pairwise linking number.

Propositions \ref{P:unlinked}-\ref{P:K44} and Lemma \ref{L:FMCorrected} were proved by Fleming and Mellor \cite{fm}.  The statement of Lemma \ref{L:FMCorrected} in \cite{fm} contained a small error; here that error has been corrected by the addition of a sixth case (the error does not affect the validity of any other results in \cite{fm}).

\begin{prop} \label{P:unlinked}
For any $n$, the graphs $K_{n,1}$, $K_{n, 2}$, $K_{n,3}$, $K_{n, 1, 1}$, $K_{n, 2, 1}$ and $K_{n,1,1,1}$ have linkless embeddings.
\end{prop}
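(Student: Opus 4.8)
The plan is to exhibit an explicit linkless embedding of the largest graphs on the list and then observe that all the others are subgraphs. Among the six graphs, $K_{n,3}$ contains $K_{n,2}$ and $K_{n,1}$ as subgraphs, while $K_{n,1,1,1}$ contains $K_{n,2,1}$ (identify two of the singleton parts' vertices as a size-2 part — more precisely $K_{n,2,1}$ is a subgraph of $K_{n,1,1,1}$ after deleting one edge, or directly a subgraph of $K_{n,1,1,1}$), and $K_{n,2,1}$ contains $K_{n,1,1}$. So it suffices to produce linkless embeddings of $K_{n,3}$ and $K_{n,1,1,1}$. Since a subgraph of a linklessly embeddable graph is linklessly embeddable (restrict the embedding), and the minor-closed characterization of linklessly embeddable graphs by Robertson–Seymour–Thomas means it is enough to check that neither $K_{n,3}$ nor $K_{n,1,1,1}$ has a Petersen-family minor — equivalently, that they are \emph{not} intrinsically linked.

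First I would handle $K_{n,3}$. The key structural point is that $K_{n,3}$ has a natural ``book'' or ``fan'' structure: write the $3$-part as $\{x,y,z\}$ and the $n$-part as $\{1,\dots,n\}$. Every cycle must alternate between the two parts, so a cycle uses at least two of $x,y,z$; in particular any cycle of length $\geq 4$ uses at least two vertices of the $n$-part as well. The cleanest approach is to give coordinates: place $1,\dots,n$ along the $z$-axis (or on a vertical line) and place $x,y,z$ so that the whole graph lies in a small number of half-planes through that line — a ``book embedding'' with the $n$-part vertices on the spine. One checks directly that any two disjoint cycles bound disjoint disks (each cycle, using two spine vertices and vertices of $\{x,y,z\}$, can be spanned within the union of at most two pages), so the link is trivial. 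Alternatively, and more in the spirit of the paper, I would cite or reprove that $K_{n,3}$ is not intrinsically linked by a minor argument: contracting or deleting to reach a Petersen-family graph is impossible because those graphs all have minimum degree $\geq 3$ and at least $15$ edges in a configuration that a bipartite-plus-a-small-side graph like $K_{n,3}$ cannot realize — but the explicit book embedding is more self-contained and is what I would write up.

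For $K_{n,1,1,1}$, let the three singleton parts be $\{x\},\{y\},\{z\}$ and the big part be $\{1,\dots,n\}$; this graph is $K_n$'s ``tripod'' analogue: $x,y,z$ are mutually adjacent and each adjacent to every $i$, but the $i$'s are mutually non-adjacent. I would again give a book embedding: put $1,\dots,n$ on the spine (a vertical line), put the triangle $xyz$ as a small triangle, and route the edges $\{x,y,z\}$–$\{i\}$ in pages. The crucial verification is that any two vertex-disjoint cycles are split. A disjoint pair of cycles can use at most the three special vertices between them, so at least one of the two cycles uses at most one special vertex — but a cycle in $K_{n,1,1,1}$ using at most one vertex from $\{x,y,z\}$ must be a triangle $[xij]$? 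No: $i,j$ are non-adjacent, so in fact a cycle using $\leq 1$ special vertex cannot exist (it would need two consecutive $n$-part vertices). Hence every cycle uses $\geq 2$ of $\{x,y,z\}$, so two disjoint cycles would need $\geq 4$ special vertices — impossible. Therefore \emph{$K_{n,1,1,1}$ has no two disjoint cycles at all}, so every embedding is trivially linkless, and the same ``no two disjoint cycles'' observation handles $K_{n,2,1}$, $K_{n,1,1}$, $K_{n,2}$, $K_{n,1}$ as well. Only $K_{n,3}$ actually \emph{has} disjoint cycles, so it is the one graph requiring a genuine embedding argument.

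The main obstacle is the $K_{n,3}$ case: one must actually produce an embedding and argue every pair of its disjoint cycles is unlinked, rather than just counting. The disjoint cycles in $K_{n,3}$ are pairs of the form (a cycle using exactly two of $x,y,z$, say $[x\, i_1\, y\, i_2]$ or longer zig-zags between $x,y$) and a cycle using the complementary information; concretely two disjoint cycles must split the pair from $\{x,y,z\}$ so that one cycle gets two of them and the other gets at most one — and a cycle with $\leq 1$ special vertex is again impossible, so in fact $K_{n,3}$ \emph{also} has no two disjoint cycles once $n$ is such that... wait — a cycle can use exactly two special vertices, e.g. $[x\,1\,y\,2]$, and a second disjoint cycle would need two special vertices from the remaining one, which is impossible. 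So $K_{n,3}$ likewise has no disjoint cycles, and the proposition collapses to the single elementary observation: \textbf{none of the six graphs contains two vertex-disjoint cycles}, because any cycle must use at least two vertices from the union of all parts of size $\leq$ the small ones, and there are at most three such vertices total. Hence every embedding of each graph is (vacuously) linkless, which is what I would write: a one-paragraph counting argument showing the cycle space contains no disjoint pair.
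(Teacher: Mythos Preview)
Your final observation---that none of these graphs contains two vertex-disjoint cycles, since every cycle must use at least two of the at most three vertices outside the large independent part---is exactly the argument the paper has in mind (it states parenthetically that ``these graphs do not contain pairs of disjoint cycles, any embedding is unlinked''). The book-embedding and Petersen-minor detours in your write-up are unnecessary and should be cut; the entire proof is the one-line pigeonhole count you reach at the end.
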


\begin{prop} \label{P:K331}
$mnl(K_{3,3,1}) = 1$.  Moreover, any embedding of $K_{3,3,1}$ contains an odd $(3,4)$-link.
\end{prop}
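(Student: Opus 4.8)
The plan is to observe that the only pairs of vertex-disjoint cycles in $K_{3,3,1}$ form a short explicit list, to attach to that list a mod-$2$ count of linked pairs, to show this count is an invariant of the graph (independent of the embedding), and finally to evaluate it on one concrete embedding, which will simultaneously furnish the matching upper bound. Write $K_{3,3,1}=(abc)(123)(x)$. Since $\{a,b,c\}$ and $\{1,2,3\}$ are each independent sets, every triangle of $K_{3,3,1}$ has the form $[x\alpha i]$, and the only $4$-cycles missing $x$ are the cycles $Q_{\alpha i}$ spanning $(\{a,b,c\}\setminus\{\alpha\})\cup(\{1,2,3\}\setminus\{i\})$, one for each such four-element set. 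Hence two vertex-disjoint cycles must occupy all seven vertices as $3+4$, forming one of the nine pairs $\{[x\alpha i],Q_{\alpha i}\}$, each a $(3,4)$-link; and these nine pairs are the \emph{only} disjoint cycle pairs, so a nontrivial link in any embedding of $K_{3,3,1}$ must be one of them.

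For an embedding $F$ I would set $\Lambda(F)=\sum_{\alpha,i}\lk_2([x\alpha i],Q_{\alpha i})\in\mathbb{Z}/2\mathbb{Z}$, the mod-$2$ linking numbers summed over the nine pairs, and show $\Lambda$ does not depend on $F$. Two embeddings of a graph in $S^3$ are joined by a sequence of ambient isotopies and single crossing changes (see \cite{cg}), and isotopies fix all linking numbers, so it suffices that one crossing change alters $\Lambda(F)$ by an even amount. A self-crossing change, and a crossing change between two edges sharing a vertex, change none of the nine $\lk_2$'s; for a crossing change between vertex-disjoint edges $e,e'$, a pair $\{C,C'\}$ is affected precisely when $e$ and $e'$ lie in different members of the pair. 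Now the edges of $K_{3,3,1}$ are of two kinds: the six spokes $x\alpha, xi$ (which lie only on triangles) and the nine edges $\alpha i$ of the embedded $K_{3,3}$ (which lie on triangles and on the $Q$'s; two spokes always share $x$). A short case check — spoke versus $K_{3,3}$-edge, and $K_{3,3}$-edge versus $K_{3,3}$-edge — shows that in every case exactly two of the nine pairs separate $e$ from $e'$. Hence $\Lambda$ is a well-defined invariant of $K_{3,3,1}$.

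It remains to evaluate $\Lambda$ and to exhibit a minimal embedding at once. I would take one concrete embedding $F_0$ of $K_{3,3,1}$ — for instance, a standard drawing of $K_{3,3}$ in a plane with its crossings resolved by pushing into $S^3$, coned to an apex $x$ placed off the plane — and check by inspecting the nine candidate pairs that exactly one of them is a nontrivial link (a Hopf link, linking number $\pm1$) while the other eight are trivial links. This yields $\Lambda(F_0)=1$ and $nl(F_0)=1$. Combined with the invariance of $\Lambda$: every embedding of $K_{3,3,1}$ has an odd number of oddly linked $(3,4)$-pairs, in particular at least one odd $(3,4)$-link, so $mnl(K_{3,3,1})\ge1$; and $F_0$ shows $mnl(K_{3,3,1})\le1$. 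Therefore $mnl(K_{3,3,1})=1$ and every embedding contains an odd $(3,4)$-link.

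The heart of the argument, and the step most prone to error, is the parity count inside the invariance proof: one must verify that every single crossing change touches an even number (in fact exactly two) of the nine pairs, keeping careful track of which edge type can appear on which cycle. A secondary technical point is the bookkeeping for $F_0$: to conclude $nl(F_0)=1$, rather than merely $\Lambda(F_0)=1$, one must check that the eight remaining pairs are genuinely unlinked, not just that their linking numbers vanish.
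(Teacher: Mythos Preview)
The paper does not actually prove this proposition; it is quoted from Fleming--Mellor \cite{fm} (see the sentence preceding Proposition~\ref{P:unlinked}). Your argument is correct and is the standard Conway--Gordon/Sachs proof that $K_{3,3,1}$, as a member of the Petersen family, is intrinsically linked: the enumeration of disjoint cycle pairs is right (exactly nine $(3,4)$-pairs, and nothing else), the invariance of $\Lambda$ under crossing changes checks out (each crossing change between vertex-disjoint edges flips exactly two of the nine mod-$2$ linking numbers, in both the spoke/$K_{3,3}$ and $K_{3,3}$/$K_{3,3}$ cases), and the fan embedding of $K_{3,3,1}$ in Figure~\ref{F:fans} serves as your $F_0$, with exactly one nontrivially linked pair and the other eight visibly split. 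So your proposal both reproduces the content of the cited result and supplies the honest caveats about where the hand-checks lie.
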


\begin{prop} \label{P:K3211}
$mnl(K_{3,2,1,1}) = 1$.  Moreover, any embedding of $K_{3,2,1,1}$ contains an odd $(3,4)$-link.
\end{prop}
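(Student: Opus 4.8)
The plan is to get the lower bound---together with the assertion that the forced link is an odd $(3,4)$-link---essentially for free from Proposition~\ref{P:K331}, and to reserve the real work for the upper bound. The key observation is that $K_{3,2,1,1}$ contains $K_{3,3,1}$ as a spanning subgraph: writing $K_{3,2,1,1}=(abc)(12)(x)(y)$ and deleting the two edges $1x$ and $2x$, the set $\{1,2,x\}$ becomes independent, so what remains is $(abc)(12x)(y)\cong K_{3,3,1}$. Restricting any embedding $F$ of $K_{3,2,1,1}$ to a subgraph $H$ cannot increase the number of links, since every pair of disjoint cycles of $H$ is a pair of disjoint cycles of $K_{3,2,1,1}$ with the same linking number; hence $mnl(K_{3,2,1,1})\ge mnl(K_{3,3,1})=1$. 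Moreover, by Proposition~\ref{P:K331} the restriction of $F$ to this copy of $K_{3,3,1}$ contains an odd $(3,4)$-link, and the very same triangle and square form an odd $(3,4)$-link in $F$ itself. So every embedding of $K_{3,2,1,1}$ contains an odd---hence non-trivial---$(3,4)$-link.

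For the matching bound $mnl(K_{3,2,1,1})\le 1$ I would exhibit an explicit embedding in which only one pair of disjoint cycles is linked. Heuristically such an embedding should exist: a one-link embedding of the $K_{3,3,1}$ subgraph (Proposition~\ref{P:K331}) ought to extend to $K_{3,2,1,1}$ by adjoining arcs for the two remaining edges $1x$ and $2x$---routed near existing arcs, and possibly after adjusting the underlying $K_{3,3,1}$ embedding---without introducing any new non-trivial link. To certify a candidate embedding one runs through its vertex-disjoint cycle pairs: there are exactly $12$ pairs of disjoint triangles (each made of one triangle through $x$ and one through $y$) and exactly $15$ triangle--square pairs (for $15$ of the $17$ triangles the complementary four vertices carry a unique $4$-cycle, while the triangles $\{1,x,y\}$ and $\{2,x,y\}$ have a complementary star and hence no $4$-cycle), and one checks that only one of these $27$ pairs is linked. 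Together with the lower bound this gives $mnl(K_{3,2,1,1})=1$; and since every embedding already contains an odd $(3,4)$-link, the unique link in a minimal embedding is automatically an odd $(3,4)$-link, so its type needs no further checking.

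The hard part is this upper bound---controlling all $27$ potential links at once. Adjoining the arcs for $1x$ and $2x$ to a one-link embedding of $K_{3,3,1}$ must be done carefully, because a priori these arcs create many new cycles, each of which has to be confirmed unlinked from every cycle disjoint from it; alternatively, one can simply display a stand-alone embedding of $K_{3,2,1,1}$ and verify it with the computational tools described in the introduction. The lower-bound half, by contrast, falls out the moment one notices the spanning $K_{3,3,1}$ subgraph---which is presumably why the statement of this proposition coincides verbatim with that of Proposition~\ref{P:K331}.
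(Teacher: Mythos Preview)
Your lower-bound argument is correct: deleting the edges $1x$ and $2x$ from $K_{3,2,1,1}=(abc)(12)(x)(y)$ does leave a spanning copy of $K_{3,3,1}=(abc)(12x)(y)$, so Proposition~\ref{P:K331} immediately forces an odd $(3,4)$-link in every embedding and gives $mnl(K_{3,2,1,1})\ge 1$. Your enumeration of the $12+15=27$ vertex-disjoint cycle pairs is also correct (no $(4,4)$- or longer links are possible on seven vertices).

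The paper itself does not prove this proposition; it is quoted from \cite{fm}, so there is no in-paper proof to compare against line by line. That said, the paper does supply exactly the ingredient you flag as missing: the fan embedding of $K_{3,2,1,1}$ shown in Figure~\ref{F:fans} is the explicit one-link embedding needed for the upper bound, and the proof of Proposition~\ref{P:Kn211} later records that this embedding contains exactly one $(3,4)$-link. So your plan matches the intended argument; the only step you leave undone is the routine check of the $27$ pairs on that specific diagram (or the computer verification you mention), and that check is already implicit in the paper's use of Figure~\ref{F:fans}.
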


\begin{prop} \label{P:K31111}
$mnl(K_{3,1,1,1,1}) = 3$.  Moreover, any embedding of $K_{3,1,1,1,1}$ contains at least 2 odd $(3,4)$-links and at least one odd $(3,3)$-link.
\end{prop}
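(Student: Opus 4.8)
The plan is to prove $mnl(K_{3,1,1,1,1})=3$ by first establishing the two ``moreover'' assertions (which together force at least three links), and then exhibiting an embedding that attains three. Write $K_{3,1,1,1,1}=(abc)(1)(2)(3)(4)$. Since two disjoint cycles use at least $6$ vertices, and $6$ or $7$ is all that is available, the only disjoint cycle pairs are the $(3,3)$-pairs --- which omit exactly one of $a,b,c$ and use all four singletons --- and the $(3,4)$-pairs, whose triangle uses one of $a,b,c$ and whose square uses the other two together with two singletons. A short count gives $18$ pairs of each type, and in a $(3,4)$-pair the square on a vertex set $\{y,z,k,l\}$ (with $y,z\in\{a,b,c\}$) is forced to be $[y\,k\,z\,l]$ because $yz$ is a non-edge. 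I will work with linking numbers modulo $2$ throughout and use the additivity $\lk(S,C+D)=\lk(S,C)+\lk(S,D)$ recalled in Section~\ref{S:prelim}.

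\emph{An odd $(3,3)$-link.} Given an embedding of $K_{3,1,1,1,1}$, I adjoin three generic arcs for the ``phantom'' edges $ab$, $bc$, $ca$ to obtain an embedding of $K_7$. Applying the Conway--Gordon theorem \cite{cg} to the seven $K_6$'s obtained by deleting one vertex, and noting that each disjoint triangle pair lies in exactly one of them, gives $\sum\lk^2\equiv 1\pmod 2$ over all $70$ disjoint triangle pairs of $K_7$. I then sort these pairs by how many phantom edges they use: $18$ use none (these are exactly the honest $(3,3)$-links), $4$ use all three (namely $[abc]$ paired with a triangle on $\{1,2,3,4\}$), $48$ use exactly one, and none uses exactly two. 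The four triangles on $\{1,2,3,4\}$ sum to $0$ mod $2$, so the all-phantom pairs contribute $0$; and grouping the one-phantom pairs by which phantom edge occurs, the sixteen using (say) $ab$ are $[ab\,i]$ paired with one of the four triangles of the $K_4$ on $\{c\}\cup(\{1,2,3,4\}\setminus\{i\})$, which again sum to $0$ for each $i$. Hence $\sum_{(3,3)}\lk^2\equiv 1\pmod 2$, so some $(3,3)$-link is odd.

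\emph{Two odd $(3,4)$-links.} In a $(3,4)$-pair with triangle $T=[x\,i\,j]$ and square $Q=[y\,k\,z\,l]$, splitting $Q$ along its diagonal $kl$ (which is an edge of the graph) gives $Q=[k\,z\,l]+[y\,k\,l]$, so $\lk(T,Q)\equiv\lk(T,[z\,k\,l])+\lk(T,[y\,k\,l])\pmod 2$; the two terms on the right are the linking numbers of the two honest $(3,3)$-links containing $T$. Summing over all $18$ $(3,4)$-pairs, each $(3,3)$-link is counted once for each of its two triangles, so $\sum_{(3,4)}\lk^2\equiv 2\sum_{(3,3)}\lk^2\equiv 0\pmod 2$; thus the number of odd $(3,4)$-links is even. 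To rule out zero: if every $(3,4)$-link were even, then for each triangle $[x\,i\,j]$ the two $(3,3)$-links through it would have equal parity. Passing to the $2$-regular ``disjointness graph'' on the $18$ triangles $[x\,i\,j]$ (edges between disjoint triangles), I check that it is a disjoint union of three $6$-cycles, one for each partition of $\{1,2,3,4\}$ into two pairs; so the $(3,3)$-links fall into three classes of size $6$, the parity function would be constant on each, and $\sum_{(3,3)}\lk^2$ would be even, contradicting the previous paragraph. Hence at least one, and therefore at least two, $(3,4)$-links are odd.

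Together these give $mnl(K_{3,1,1,1,1})\ge 3$; for the matching upper bound I would exhibit an explicit embedding with exactly three links (checked with the program of \cite{amt}), which must then consist of two odd $(3,4)$-links and one odd $(3,3)$-link, yielding equality. The delicate point is the parity bookkeeping: all phantom-edge contributions must be shown to vanish mod $2$, and --- crucially --- the step from ``some'' to ``at least two'' odd $(3,4)$-links relies on each class of $(3,3)$-links having \emph{even} size, so that a constant parity there contradicts the odd total of the first step; were any component of the disjointness graph odd, that argument would collapse.
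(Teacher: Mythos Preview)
The paper does not actually prove Proposition~\ref{P:K31111}; it is quoted from Fleming--Mellor \cite{fm} along with the other preliminary results, so there is no in-paper argument to compare against.

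Your argument is correct and self-contained. The odd $(3,3)$-link step is a clean reduction to Conway--Gordon: extending to a phantom $K_7$, summing the $K_6$ invariant over the seven vertex-deletions, and then killing the phantom contributions via the tetrahedron relation (the four triangles in a $K_4$ sum to zero mod~$2$) is exactly right, and the counts $18+4+48+0=70$ check. For the $(3,4)$-links, the diagonal split $[ykzl]=[ykl]+[zkl]$ and the double-counting over ordered disjoint pairs give the parity constraint cleanly. The key structural observation---that the disjointness graph on the $18$ mixed triangles is three $6$-cycles indexed by the pair-partitions of $\{1,2,3,4\}$---is correct and is precisely what makes the ``some implies at least two'' step work: each class has even size, so constant parity on classes would contradict $\sum_{(3,3)}\lk\equiv 1$. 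Your closing caveat that this last step would fail if a component had odd length shows you have identified the genuine hinge of the argument. The upper bound via an explicit embedding with three links (as in Figure~\ref{F:K31111}, embedding $F_3^0$) is routine.
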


\begin{prop} \label{P:K44}
$mnl(K_{4,4}) = 2$.  Moreover, any embedding of $K_{4,4}$ contains at least 2 odd $(4,4)$-links.
\end{prop}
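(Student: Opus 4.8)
The plan is to determine $mnl(K_{4,4})$ by matching an explicit upper bound with a lower bound that, by its structure, simultaneously forces the links in question to be odd $(4,4)$-links.

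\emph{Setup and upper bound.} Write the two parts of $K_{4,4}$ as $(abcd)(1234)$. Every cycle in this bipartite graph has even length and uses equally many vertices from each part; two \emph{disjoint} cycles together exhaust all eight vertices, so each must be a $4$-cycle meeting each part in two vertices (a ``square''). On any such four vertices there is exactly one square, so $K_{4,4}$ has $\binom{4}{2}^2=36$ squares, each with a unique disjoint partner (the square on the complementary four vertices), giving $18$ disjoint pairs; thus \emph{every} link in any embedding of $K_{4,4}$ is a $(4,4)$-link. For the bound $mnl(K_{4,4})\le 2$ I would exhibit one explicit embedding in which exactly two of these $18$ pairs are linked, each with linking number $\pm1$ — such an embedding is straightforward to construct and check directly (cf. the drawing techniques discussed in Section~\ref{S:intro}). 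Combined with the lower bound below, this also forces both links to be odd, so the bound in the statement is sharp.

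\emph{Key tool: contracting an edge to $K_{3,3,1}$.} For any edge $e$ of $K_{4,4}$, say $e=a1$, contracting $e$ identifies $a$ and $1$ to a vertex $v$ adjacent to all of $\{b,c,d,2,3,4\}$, so $K_{4,4}/e\cong K_{3,3,1}$ with parts $(bcd)(234)(v)$. Given an embedding $F$ of $K_{4,4}$, collapsing the embedded arc $e$ to a point yields an embedding $F/e$ of $K_{3,3,1}$, and the reverse ``un-contraction'' defines an injection from cycles of $K_{3,3,1}$ to cycles of $K_{4,4}$: a cycle avoiding $v$ is unchanged, while a cycle through $v$ is routed through the arc $e$ and becomes a square of $K_{4,4}$ containing the edge $e$. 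This map sends disjoint pairs to disjoint pairs and preserves linking numbers, since collapsing an embedded sub-arc of a loop does not change its isotopy class relative to a disjoint loop. Now every triangle in $K_{3,3,1}$ must use the vertex $v$ of the singleton part, while the second component of a $(3,4)$-link, being disjoint from that triangle, cannot use $v$; hence, by Proposition~\ref{P:K331}, $F/e$ contains an odd $(3,4)$-link whose triangle passes through $v$ and whose $4$-cycle does not. Un-contracting produces an odd $(4,4)$-link of $F$, one of whose two squares contains the edge $e$. Conclusion: \emph{for every edge $e$ of $K_{4,4}$, every embedding of $K_{4,4}$ contains an odd $(4,4)$-link among whose eight edges $e$ appears.}

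\emph{Lower bound.} Suppose an embedding $F$ of $K_{4,4}$ has only $k$ odd $(4,4)$-links. Each such link is a pair of disjoint squares and so involves $8$ of the $16$ edges of $K_{4,4}$, whence the set of edges occurring in some odd $(4,4)$-link of $F$ has at most $8k$ elements. If $k\le 1$ this leaves an edge of $K_{4,4}$ lying in no odd $(4,4)$-link of $F$, contradicting the previous paragraph. Hence $k\ge 2$, i.e. every embedding of $K_{4,4}$ contains at least two odd $(4,4)$-links, so $mnl(K_{4,4})\ge 2$; with the explicit embedding above, $mnl(K_{4,4})=2$. (As a cross-check on the ``at least two'' assertion, one can also verify that $\sum_{\text{pairs}}\lk \bmod 2$ is unchanged by every crossing change — a crossing change between non-adjacent edges alters it by $4$ — so the number of odd links has fixed parity; but the edge-counting argument above already gives the bound $2$ directly.)

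\emph{Main obstacle.} The substantive point is the middle step: one must verify carefully that collapsing the embedded edge $e$ genuinely descends to a spatial embedding of $K_{3,3,1}$ for which the linking numbers of un-contracted disjoint cycles are literally unchanged (not merely unchanged mod $2$), and track precisely which cycles of $K_{3,3,1}$ lift to squares of $K_{4,4}$ through $e$. This is routine spatial-graph bookkeeping, but it is where attention is needed; the classification of disjoint cycle pairs and the final pigeonhole count are elementary.
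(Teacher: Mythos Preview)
Your argument is correct. Note, however, that this paper does not itself prove Proposition~\ref{P:K44}: it is quoted from Fleming--Mellor \cite{fm}, so there is no in-paper proof to compare against. Your route---contracting an arbitrary edge $e$ to obtain a spatial $K_{3,3,1}$, invoking Proposition~\ref{P:K331} to get an odd $(3,4)$-link whose triangle necessarily passes through the contracted vertex, and then un-contracting to an odd $(4,4)$-link through $e$---is clean and yields the sharp statement ``every edge lies on some odd $(4,4)$-link,'' from which the pigeonhole over the $16$ edges immediately gives $\ge 2$. The bookkeeping you flag (that collapsing an embedded arc of one component leaves its linking number with a disjoint component unchanged, and that every triangle in $(bcd)(234)(v)$ uses $v$ while its partner square does not) is straightforward and accurate. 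The upper bound is handled in this paper by the fan embedding of $K_{4,4}$ (Figure~\ref{F:fans}), which indeed has exactly two links, so your appeal to an explicit embedding is fine.
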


\begin{lem} \label{L:FMCorrected}
Let $F$ be an embedding of $K_{2,2,1}$ (the 1-skeleton of a pyramid).  If a loop $C$ in $S^3$ has odd linking number with one of the faces of the pyramid in $F$, then it has odd linking number with at least 6 cycles in $F$.  Furthermore, $C$ is of one of the following six types:

\begin{enumerate}
\item $C$ has odd linking with 1 triangle, 3 squares and 3 pentagons in $F$, including $F$'s base square.
\item $C$ has odd linking with 2 triangles, 2 squares and 2 pentagons in $F$, {\em not} including $F$'s base square.
\item $C$ has odd linking with 2 triangles, 4 squares and 2 pentagons in $F$, {\em not} including $F$'s base square.
\item $C$ has odd linking with 4 triangles and 4 pentagons in $F$.
\item $C$ has odd linking with 3 triangles, 3 squares and 1 pentagon in $F$, including $F$'s base square.  Additionally, $C$ has even linking with a second pentagon in $F$.
\item $C$ has odd linking with 3 triangles, 5 squares, 1 pentagon in $F$, including $F$'s base square.  Additionally, $C$ has even linking with 1 triangle in $F$.
\end{enumerate} 
\end{lem}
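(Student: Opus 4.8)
The plan is to use the additivity of the linking number recalled in Section~\ref{S:prelim} to reduce the statement to linear algebra over the cycle space of $K_{2,2,1}$. Label the parts so that $x$ is the apex and $[a1b2]$ is the base square, and set $t_1=[xa1]$, $t_2=[x1b]$, $t_3=[xb2]$, $t_4=[x2a]$ for the four triangular faces. First I would record the structure of the cycle space: $K_{2,2,1}$ has $8$ edges and $5$ vertices, so its cycle space is $4$-dimensional, and the fundamental cycles of the spanning tree consisting of the four lateral edges $xa,x1,xb,x2$ are exactly $t_1,t_2,t_3,t_4$, so these form a basis. A short direct computation then lists the $13$ cycles of $K_{2,2,1}$: the four triangles $t_i$; the five squares, namely the base $B=t_1+t_2+t_3+t_4$ and the four lateral squares $t_i+t_{i+1}$ (indices mod $4$); and the four pentagons $t_i+t_{i+1}+t_{i+2}$. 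The only even subgraphs that are not single cycles are the empty subgraph and the two ``bowties'' $t_1+t_3$ and $t_2+t_4$, accounting for all $16=2^4$ elements of the space.

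Next I would reduce mod $2$. For the loop $C$, the map $\phi(\gamma)=\lk(C,\gamma)\bmod 2$ is a linear functional on this $4$-dimensional $\mathbb{Z}/2$-space. Since the five faces span the cycle space, the hypothesis that $C$ links some face oddly says exactly that $\phi\neq 0$; then $\phi^{-1}(1)$ has $2^3=8$ elements, and since at most two of them are bowties, at least six are honest cycles, which proves the first assertion (and is sharp, as Type~2 will show). For the classification, the dihedral symmetry group of the pyramid acts on $\phi$, identified with the tuple $(\phi(t_1),\dots,\phi(t_4))\in(\mathbb{Z}/2)^4$, as $D_4$ acts on a $4$-cycle, so the $15$ nonzero tuples fall into five orbits: weight $1$; weight $2$ with the two $1$'s adjacent; weight $2$ with the two $1$'s opposite; weight $3$; and weight $4$. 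Evaluating $\phi$ on the $13$ cycles for each representative (using $\phi(B)=\sum\phi(t_i)$, $\phi(t_i+t_{i+1})=\phi(t_i)+\phi(t_{i+1})$, and so on) gives in turn: $1$ triangle, $3$ squares including the base, $3$ pentagons (Type~1); $2$ triangles, $2$ squares, $2$ pentagons, with both bowties in $\phi^{-1}(1)$ so that only six cycles occur (Type~2); $2$ triangles, $4$ squares, $2$ pentagons (Type~3); $4$ triangles, $0$ squares, $4$ pentagons (Type~4); and $3$ triangles, $3$ squares including the base, $1$ pentagon for the weight-$3$ orbit, which is the linking data shared by Types~5 and~6.

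Finally I would split the weight-$3$ case using integer linking numbers, since the mod-$2$ data alone does not distinguish Types~5 and~6. Orient the $t_i$ around the triangular faces and set $\ell_i=\lk(C,t_i)$; in the weight-$3$ case $\ell_1,\ell_2,\ell_3$ are odd and $\ell_4$ is even, and by additivity the lone odd pentagon is $[xa1b2]$ with $\lk(C,[xa1b2])=\ell_1+\ell_2+\ell_3$, while $t_4$ and the other three pentagons have even linking number with $C$. If $\ell_4\neq 0$, then $C$ has nonzero even linking with the triangle $t_4$ (Type~6). If $\ell_4=0$, then the three even pentagons have linking numbers $\ell_1+\ell_2$, $\ell_2+\ell_3$, $\ell_3+\ell_1$ with $C$, whose sum is $2(\ell_1+\ell_2+\ell_3)\neq 0$ because $\ell_1+\ell_2+\ell_3$ is odd, so at least one of those pentagons is nontrivially (and evenly) linked with $C$ (Type~5). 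I expect the main work here to be bookkeeping rather than conceptual: one must fix orientations so that the identities $B=t_1+t_2+t_3+t_4$, $[xa1b]=t_1+t_2$, and the rest hold over $\mathbb{Z}$ and not merely mod $2$, and must check that each such decomposition is realized by iterated applications of the ``$+$'' operation along connected intersections, so that the additivity of Section~\ref{S:prelim} applies legitimately.
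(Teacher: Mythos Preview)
The paper does not prove this lemma here; it is quoted from Fleming--Mellor \cite{fm} (with the sixth case added as a correction), so there is no in-paper argument to compare against, and your proposal must be judged on its own.

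Your approach is correct. The reduction to a $\mathbb{Z}/2$-linear functional $\phi$ on the $4$-dimensional cycle space, followed by the $D_4$-orbit decomposition of the $15$ nonzero vectors into five classes, cleanly yields the mod-$2$ linking profile in each case. The orientation bookkeeping you flag does go through: taking $t_1=[x\to a\to 1\to x]$, $t_2=[x\to 1\to b\to x]$, $t_3=[x\to b\to 2\to x]$, $t_4=[x\to 2\to a\to x]$, adjacent triangles share their common edge with opposite orientations, so each of $t_i+t_{i+1}$, $t_i+t_{i+1}+t_{i+2}$, and $t_1+t_2+t_3+t_4$ is built by iterated ``$+$'' along a connected arc in the sense of Section~\ref{S:prelim}, and the integer identities $\lk(C,t_i+\cdots)=\ell_i+\cdots$ hold on the nose. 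Your dichotomy for the weight-$3$ case is then valid: $\ell_4\neq 0$ gives Type~6, while $\ell_4=0$ forces $(\ell_1+\ell_2)+(\ell_2+\ell_3)+(\ell_3+\ell_1)=2(\ell_1+\ell_2+\ell_3)\neq 0$, so some evenly-linked pentagon is nontrivially linked and Type~5 holds.

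One point to flag: your weight-$3$ computation gives $3$ odd triangles, $3$ odd squares (including the base), and $1$ odd pentagon, whereas the stated Type~6 reads ``$3$ triangles, $5$ squares, $1$ pentagon.'' The ``$5$'' must be a misprint for ``$3$'': with $5$ odd squares one would have $9$ genuine cycles in $\phi^{-1}(1)$, but $|\phi^{-1}(1)|=8$ and in the weight-$3$ case exactly one of those eight is a bowtie, leaving only $7$. Every subsequent use of the lemma in the $K_{4,4,1}$ analysis treats Types~$1$, $5$, and~$6$ uniformly as linking exactly three squares with odd linking number (for instance the count ``$8*3-4=20$'' in the proof of Claim~\ref{C:2squares}), so your computation matches the intended statement.
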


\section{Some general results} \label{S:general}

In this section, we prove some general results for complete partite graphs where all but 4 of the vertices are in one partition -- i.e. for graphs $K_{n, 4}$, $K_{n, 3, 1}$, $K_{n, 2,2}$, $K_{n, 2, 1, 1}$ and $K_{n, 1, 1, 1, 1}$.  The results are summarized in Table \ref{Ta:general}.  The first of these graphs was dealt with by Fleming and Mellor \cite{fm}, who introduced the {\em fan embedding} for $K_{m,n}$.  The fan embedding for $K_{4,4}$ is shown in Figure \ref{F:fans}.

\begin{prop} \cite{fm} \label{P:Kn4}
$mnl(K_{n,4}) = 2 \binom{n}{4}$, and the minimum is realized by the fan embedding.
\end{prop}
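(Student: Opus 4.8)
The plan is to prove the equality by sandwiching $mnl(K_{n,4})$ between a lower bound of $2\binom{n}{4}$, obtained by reducing to copies of $K_{4,4}$ and invoking Proposition \ref{P:K44}, and an upper bound of $2\binom{n}{4}$, realized by the fan embedding.

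First I would pin down which pairs of disjoint cycles can occur. Write the vertex partition of $K_{n,4}$ as $(v_1\cdots v_n)(a_1a_2a_3a_4)$. Since the graph is bipartite every cycle has even length, and a $2k$-cycle uses exactly $k$ of the four vertices $a_1,\dots,a_4$, so $k\le 4$. If $C$ and $D$ are disjoint cycles of lengths $2k$ and $2\ell$, then together they use $k+\ell\le 4$ of the $a_i$; hence $k=\ell=2$, i.e. every disjoint pair consists of two squares. Such a pair uses all four of the $a_i$ and exactly four of the $v_j$, so it lies in the sub-$K_{4,4}$ spanned by $\{a_1,a_2,a_3,a_4\}$ together with those four $v_j$, and in no other. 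Consequently the $\binom{n}{4}$ subgraphs of $K_{n,4}$ isomorphic to $K_{4,4}$, one for each $4$-element subset of $\{v_1,\dots,v_n\}$, partition the collection of disjoint cycle pairs of $K_{n,4}$.

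For the lower bound, given any embedding $F$ of $K_{n,4}$ and any $4$-subset $S\subseteq\{v_1,\dots,v_n\}$, the restriction of $F$ to the corresponding copy of $K_{4,4}$ is an embedding of $K_{4,4}$, so by Proposition \ref{P:K44} it contains at least two (odd) $(4,4)$-links. By the previous paragraph no link is counted in two different subgraphs, so $nl(F)\ge 2\binom{n}{4}$. For the upper bound I would recall the fan embedding of $K_{4,4}$ (Figure \ref{F:fans}), which has exactly two links, and observe that the fan embedding of $K_{n,4}$ is arranged precisely so that each of the $\binom{n}{4}$ sub-$K_{4,4}$'s is isotopic to the fan embedding of $K_{4,4}$; since each contributes exactly two links and links are not shared across subgraphs, the fan embedding of $K_{n,4}$ has exactly $2\binom{n}{4}$ links. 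Combining the two bounds gives the result.

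The main obstacle is the upper bound: one must describe the fan embedding of $K_{n,4}$ explicitly and verify that every induced copy of $K_{4,4}$ inside it really carries the two-link fan embedding and produces no additional links. Once the combinatorial reduction above is in place this is essentially bookkeeping, reducing everything to the already-established count for $K_{4,4}$; the lower bound, by contrast, is immediate from Proposition \ref{P:K44} given the observation that disjoint cycle pairs live in unique $K_{4,4}$ subgraphs.
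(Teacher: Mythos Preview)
Your argument is correct, and in fact it is precisely the template the paper uses for the analogous Propositions~\ref{P:Kn31}, \ref{P:Kn22}, and \ref{P:Kn211}: identify which pairs of disjoint cycles can occur, observe that each lives in a unique subgraph isomorphic to a smaller graph whose $mnl$ is known, and then check that in the fan embedding each such subgraph is isotopic to the model embedding. Note, however, that the paper does not actually supply its own proof of Proposition~\ref{P:Kn4}; it is simply quoted from \cite{fm}, so there is no in-paper argument to compare against beyond this shared methodology.
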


We can get similar results for other graphs by using the fan embedding for $K_{n,4}$, together with (carefully chosen) additional edges among the four vertices in the second partition.  Figure \ref{F:fans} shows fan embeddings for $K_{4,4}$, $K_{4,3,1}$, $K_{3,3,1}$, $K_{3,2,1,1}$, $K_{4,2,2}$ and $K_{4,2,1,1}$.

    \begin{figure} [htpb]
    $$\scalebox{1}{\includegraphics{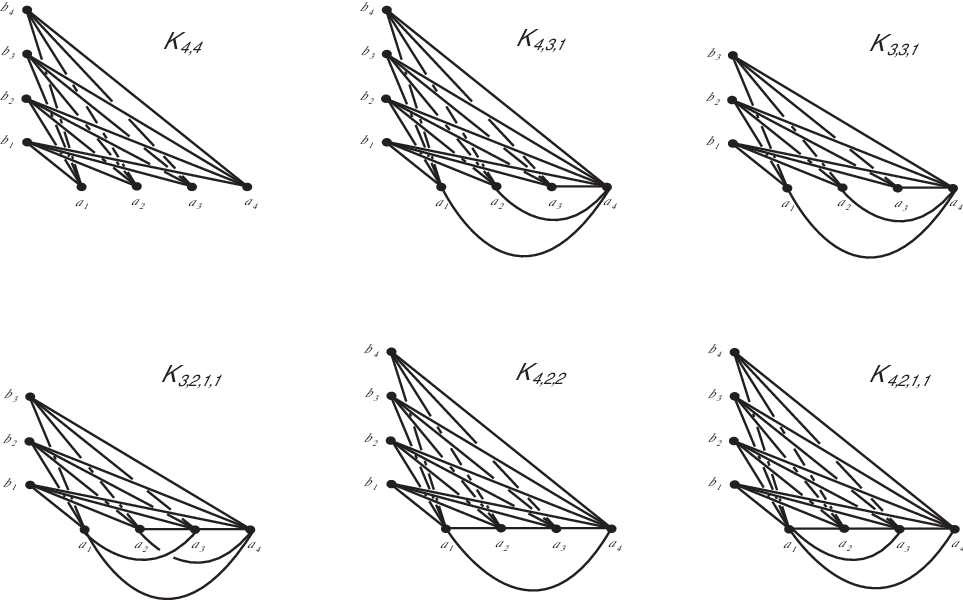}}$$
    \caption{Fan embeddings} \label{F:fans}
    \end{figure} 

\begin{prop} \label{P:Kn31}
$mnl(K_{n, 3, 1}) = {n \choose 3} + 2 {n \choose 4}$
\end{prop}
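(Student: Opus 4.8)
The plan is to establish matching upper and lower bounds of ${n \choose 3} + 2{n \choose 4}$. For the upper bound, I would exhibit an explicit embedding: start with the fan embedding of $K_{n,4}$ on partitions $(v_1 \cdots v_n)$ and $(abcd)$, which by Proposition \ref{P:Kn4} has exactly $2{n \choose 4}$ links, all odd $(4,4)$-links using all four of $a,b,c,d$ together with four of the $v_i$. Then add the three edges forming a triangle $[abc]$ on three of the four vertices in the size-4 partition (making it a $K_{n,3,1}$ with partitions $(v_1\cdots v_n)$, $(abc)$, $(d)$), routing these new edges very close to the existing fan structure so as to create as few new links as possible. I expect the new links created to be exactly one $(3,4)$-link for each choice of three of the $v_i$, namely the triangle $[abc]$ linked with a square on three $v_i$'s and the vertex $d$ — giving ${n \choose 3}$ additional links and no others. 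Verifying this count for the specific embedding shown in Figure \ref{F:fans} (extended in the obvious fan-like way to general $n$) is a finite check of which pairs of disjoint cycles can link, organized by how many vertices each cycle uses from each partition.

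For the lower bound, the idea is a counting argument over subgraphs isomorphic to graphs whose minimal link number is already known. Every subgraph of $K_{n,3,1}$ on the three special vertices $a,b,c$, the apex $d$, plus three of the $v_i$ is a copy of $K_{3,3,1}$; by Proposition \ref{P:K331} each such copy contains an odd $(3,4)$-link, and there are ${n \choose 3}$ such copies. Separately, every subgraph on $a,b,c,d$ plus four of the $v_i$ contains a copy of $K_{4,4}$, which by Proposition \ref{P:K44} contains at least two odd $(4,4)$-links; there are ${n \choose 4}$ such copies, suggesting $2{n\choose 4}$ links of that type. The main work is to show these two families of links are essentially disjoint and that no link is massively overcounted within each family — i.e., to bound how many copies of $K_{3,3,1}$ a given $(3,4)$-link lies in, and how many copies of $K_{4,4}$ a given $(4,4)$-link lies in, and to check that an odd $(3,4)$-link in one of the $K_{3,3,1}$'s cannot be "reused" as one of the $(4,4)$-links.

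Here the key structural observations should be: a $(3,4)$-link's triangle, being a 3-cycle in a complete partite graph, must use vertices from at least two partitions, and in $K_{n,3,1}$ with only a size-3 and size-1 partition available off the main partition, the triangle is pinned down to use very few of the $v_i$ — likely forcing it to be $[abc]$ or to use at most one $v_i$ — so a given odd $(3,4)$-link appears in at most a controlled number of the ${n \choose 3}$ copies of $K_{3,3,1}$. A parallel argument handles the $(4,4)$-links: a 4-cycle alternates between partitions, and with the partition sizes here a $(4,4)$-link is forced to use all of $a,b,c,d$, so it lies in exactly one copy of $K_{4,4}$. Assembling these bounds gives at least ${n \choose 3} + 2{n \choose 4}$ links, matching the embedding.

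The step I expect to be the main obstacle is the disjointness-and-overcounting bookkeeping in the lower bound — precisely, ruling out the possibility that a single cycle-pair is counted both as a forced $(3,4)$-link in many $K_{3,3,1}$ subgraphs and showing the $(3,4)$-links and $(4,4)$-links cannot coincide. Since a $(3,4)$-link and a $(4,4)$-link have triangles/squares of different sizes, they genuinely differ as unordered pairs of cycles, which should make the families disjoint for free; the real subtlety is the within-family overcount for the $(3,4)$-links, which requires a careful case analysis of which 3-cycles of $K_{n,3,1}$ can possibly link a disjoint 4-cycle with odd linking number, and in how many of the ${n \choose 3}$ triples $\{v_i, v_j, v_k\}$ the relevant $K_{3,3,1}$ even contains both cycles of the pair.
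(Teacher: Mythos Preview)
Your overall strategy---an explicit fan-type embedding for the upper bound and a subgraph count over copies of $K_{3,3,1}$ and $K_{4,4}$ for the lower bound---is exactly the paper's approach. But there are two concrete problems.

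First, you have misidentified the graph structure. With partitions $(v_1\cdots v_n)(abc)(d)$, the vertices $a,b,c$ lie in a common part and are therefore \emph{non}-adjacent; $[abc]$ is not a cycle in $K_{n,3,1}$ at all. To pass from $K_{n,4}$ to $K_{n,3,1}$ you add the three edges $da,db,dc$ (a star at the apex), not a triangle on $a,b,c$. Consequently your description of the new $(3,4)$-links in the fan embedding (``the triangle $[abc]$ linked with a square on three $v_i$'s and $d$'') cannot be right. In fact every $3$-cycle in $K_{n,3,1}$ must contain the apex $d$ together with one vertex from $\{a,b,c\}$ and one $v_i$; the correct picture of the $(3,4)$-links in the fan embedding has to be redone with this in mind. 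The paper handles the upper bound by noting that in the fan embedding every induced $K_{3,3,1}$ is isotopic to the standard fan $K_{3,3,1}$ (with exactly one $(3,4)$-link) and every induced $K_{4,4}$ is isotopic to the fan $K_{4,4}$ (with exactly two $(4,4)$-links).

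Second, the ``main obstacle'' you anticipate for the lower bound is not an obstacle at all, and the case analysis you sketch is unnecessary. A $(3,4)$-link occupies $3+4=7$ vertices, which is the entire vertex set of the $K_{3,3,1}$ subgraph it sits in; hence each odd $(3,4)$-link lies in exactly one of the $\binom{n}{3}$ copies of $K_{3,3,1}$, and there is no overcount. Likewise a $(4,4)$-link occupies $8$ vertices, the whole of its $K_{4,4}$, so each lies in exactly one of the $\binom{n}{4}$ copies. Since the two families have different cycle lengths they are automatically disjoint. This single observation (``each link uses all the vertices of its subgraph'') is the entire lower-bound argument; no bookkeeping about which triangles or squares can occur is needed.
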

\begin{proof}
$K_{n,3,1} = (1\dots n)(abc)(x)$.  We first observe that the only possible pairs of linked cycles are $(3,4)$-links and $(4,4)$-links.  Since no cycle can contain adjacent vertices in $\{1, \dots, n\}$, and any cycle must use at least two vertices in $\{a,b,c,x\}$, two disjoint cycles must be either 4-cycles or 3-cycles.  Since any 3-cycle uses $x$, no two 3-cycles are disjoint.  So the only possible links are $(3,4)$-links and $(4,4)$-links.

$K_{n,3,1}$ contains $n \choose 3$ subgraphs isomorphic to $K_{3,3,1}$.  By Proposition \ref{P:K331}, each of these subgraphs contains at least one odd $(3,4)$-link.  Also, $K_{n,3,1}$ contains $n \choose 4$ subgraphs isomorphic to $K_{4,4}$, and by Proposition \ref{P:K44} each of these subgraphs contains at least two odd $(4,4)$-links.  All of these links are distinct, since each uses all the vertices in the respective subgraph.  So $mnl(K_{n,3,1}) \geq {n \choose 3} + 2 {n \choose 4}$.

However, in the fan embedding for $K_{n,3,1}$, the embedding of every subgraph isomorphic to $K_{3,3,1}$ is isotopic to the fan embedding of $K_{3,3,1}$ shown in Figure \ref{F:fans}.  This embedding has exactly one odd $(3,4)$-link, so the fan embedding of $K_{n,3,1}$ contains exactly $n \choose 3$ odd $(3,4)$-links.  Similarly, the embedding of every subgraph isomorphic to $K_{4,4}$ is isotopic to the fan embedding of $K_{4,4}$ shown in Figure \ref{F:fans}.  This embedding has exactly two odd $(4,4)$-links, so the fan embedding of $K_{n,3,1}$ contains exactly $2{n \choose 4}$ odd $(4,4)$-links.  Hence, the fan embedding is a minimal link embedding.
\end{proof}

\newpage
\begin{prop} \label{P:Kn22}
$mnl(K_{n,2,2}) = 2 {n \choose 4}$
\end{prop}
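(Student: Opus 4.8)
The plan is to mirror the structure of the proof of Proposition~\ref{P:Kn31}, exploiting the fact that $K_{n,2,2}$, like $K_{n,3,1}$, has the property that its only linked cycles are $(3,4)$-links and $(4,4)$-links, and that a suitable fan embedding realizes the lower bound coming from $K_{4,4}$ subgraphs. Write $K_{n,2,2} = (1\dots n)(ab)(12)$. First I would analyze which pairs of disjoint cycles can possibly be linked: no cycle uses two adjacent vertices of $\{1,\dots,n\}$, and since the other two partitions have only two vertices each, any cycle must alternate and hence has length $3$, $4$, $5$, or $6$ using at most two of the $n$-vertices (actually a $3$-cycle uses one $n$-vertex and one vertex from each of $(ab)$ and $(12)$, while a larger cycle uses two $n$-vertices). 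Two disjoint cycles together use at most $4$ vertices from each of the size-$2$ partitions --- impossible, they have only $2$ each --- so each partition contributes at most $2$ total across the pair, forcing each cycle to be a $3$-cycle or a $4$-cycle. As in the $K_{n,3,1}$ case, two $3$-cycles cannot be disjoint (a $3$-cycle must use one vertex from each of the two size-$2$ partitions, so two disjoint ones would need $4$ such vertices), leaving only $(3,4)$- and $(4,4)$-links.

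Next I would establish the lower bound. The graph $K_{n,2,2}$ contains $\binom{n}{4}$ subgraphs isomorphic to $K_{4,2,2}$ on each $4$-subset of $\{1,\dots,n\}$; but more to the point, each $4$-subset together with $(ab)(12)$ gives a $K_{4,4}$ if we merge $(ab)\cup(12)$ into a single part --- wait, that is not a subgraph. Instead, the cleaner route: $K_{4,4}$ is \emph{not} a subgraph of $K_{n,2,2}$ in the obvious way, so I would use Proposition~\ref{P:K44} applied to the $K_{4,4}$ obtained by taking a $4$-subset $S$ of $\{1,\dots,n\}$ on one side and $\{a,b,1,2\}$ on the other; the edges within $\{a,b,1,2\}$ are absent in $K_{4,4}$ anyway, so this \emph{is} a subgraph of $K_{n,2,2}$, and Proposition~\ref{P:K44} guarantees at least two odd $(4,4)$-links in each, all using the four vertices of $S$ and all four of $\{a,b,1,2\}$, hence all distinct across different $S$. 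That yields $2\binom{n}{4}$ distinct links and so $mnl(K_{n,2,2}) \ge 2\binom{n}{4}$.

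For the upper bound I would exhibit the fan embedding of $K_{n,2,2}$ (extending the $K_{4,2,2}$ fan embedding shown in Figure~\ref{F:fans}) and argue, exactly as in Proposition~\ref{P:Kn31}, that every $K_{4,4}$-subgraph of the form $(S)(ab12)$ is isotopic to the fan embedding of $K_{4,4}$, which has exactly two odd $(4,4)$-links, contributing exactly $2\binom{n}{4}$ links; and that the embedding contains \emph{no} $(3,4)$-links. The latter point is where the argument genuinely differs from the $K_{n,3,1}$ case: there, the $(3,4)$-links came from $K_{3,3,1}$ subgraphs which are intrinsically linked, but $K_{n,2,2}$ has no intrinsically-linked three-partite-on-small-parts subgraph forcing a $(3,4)$-link --- indeed $K_{2,2,2}$ and $K_{3,2,2}$ are not intrinsically linked --- so one expects the fan embedding to have no $(3,4)$-links at all, and I would verify this by direct inspection of the fan picture (checking that each triangle and each square sharing no vertices have linking number zero in that embedding). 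I expect this verification --- confirming that the fan embedding kills every $(3,4)$-link while keeping exactly two $(4,4)$-links per $K_{4,4}$ block --- to be the main obstacle, since it requires care with the geometry of the fan rather than a purely combinatorial count; but it is entirely parallel to the computation already carried out for $K_{n,3,1}$ and should go through by the same reasoning, giving $mnl(K_{n,2,2}) = 2\binom{n}{4}$.
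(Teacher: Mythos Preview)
Your proposal is correct and follows essentially the same approach as the paper: the lower bound comes from the $\binom{n}{4}$ copies of $K_{4,4}$ on a $4$-subset of the large part together with $\{a,b\}\cup\{c,d\}$ (your momentary worry that this is not a subgraph is resolved exactly as you resolve it), and the upper bound comes from the fan embedding. One streamlining the paper uses that you may want to adopt: rather than checking separately that the fan embedding has no $(3,4)$-links, observe that \emph{every} link in $K_{n,2,2}$ (being on cycles of length at most $4$) lies inside some subgraph $K_{4,2,2}$, and in the fan embedding each such subgraph is isotopic to the fan $K_{4,2,2}$, which contains exactly two links, both $(4,4)$; this handles the $(3,4)$ and $(4,4)$ cases simultaneously and spares you the direct geometric inspection you flagged as the main obstacle.
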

\begin{proof}
$K_{n,2,2}$ contains $n \choose 4$ subgraphs isomorphic to $K_{4,4}$, each of which contains at least two odd $(4,4)$-links by Proposition \ref{P:K44}.  So $mnl(K_{n,2,2}) \geq 2{n\choose 4}$.

As in Proposition \ref{P:Kn31}, all links in $K_{n,2,2}$ involve cycles of length at most four.  Any such link is contained in a subgraph isomorphic to $K_{4,2,2}$.  In the fan embedding of $K_{n,2,2}$, any such subgraph is isotopic to the fan embedding of $K_{4,2,2}$, which contains exactly two links, both odd $(4,4)$-links.  So the fan embedding of $K_{n,2,2}$ contains exactly $2{n\choose 4}$ links, and is a minimal link embedding.
\end{proof}

\begin{prop} \label{P:Kn211}
$mnl(K_{n,2,1,1}) = {n\choose 3} + 2{n \choose 4}$
\end{prop}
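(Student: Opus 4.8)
The plan is to mimic Propositions \ref{P:Kn31} and \ref{P:Kn22}: first determine which pairs of disjoint cycles can occur and which small subgraph a given link must lie in, then obtain a matching lower bound by summing over those subgraphs, and finally check that the fan embedding of $K_{n,2,1,1}$ (Figure \ref{F:fans}) attains it. Write $K_{n,2,1,1} = (1\dots n)(ab)(x)(y)$ and put $S = \{a,b,x,y\}$. I would begin with a short lemma about cycles: since $\{1,\dots,n\}$ is independent, no two of its vertices are consecutive on a cycle $C$, so counting the edges along $C$ that join $S$ to $\{1,\dots,n\}$ shows that $C$ uses at least two vertices of $S$ and that a cycle using exactly two vertices of $S$ has length at most $4$. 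Since $|S| = 4$, for a pair of disjoint cycles this forces each cycle to use exactly two vertices of $S$, hence to be a triangle or a square; so every link in $K_{n,2,1,1}$ is a $(3,3)$-, $(3,4)$-, or $(4,4)$-link. Tracking the vertices of $\{1,\dots,n\}$ used, a $(4,4)$-link uses four of them together with all of $S$ and so lies in a unique copy of $K_{4,4}$; a $(3,4)$-link uses three of them with all of $S$ and lies in a unique copy of $K_{3,2,1,1}$; a $(3,3)$-link uses two of them with all of $S$ and so lies in some copy of $K_{3,2,1,1}$.

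For the lower bound, $K_{n,2,1,1}$ contains $\binom{n}{4}$ copies of $K_{4,4}$, each with at least two odd $(4,4)$-links by Proposition \ref{P:K44}, and these are distinct because a $(4,4)$-link determines its four vertices in $\{1,\dots,n\}$; it also contains $\binom{n}{3}$ copies of $K_{3,2,1,1}$, each with at least one odd $(3,4)$-link by Proposition \ref{P:K3211}, and these are distinct because such a link occupies all seven vertices of its copy. As the two link types are different, $mnl(K_{n,2,1,1}) \ge \binom{n}{3} + 2\binom{n}{4}$. For the upper bound, pass to the fan embedding of $K_{n,2,1,1}$: each copy of $K_{4,4}$ is isotopic to the fan embedding of $K_{4,4}$, which has exactly two links (both $(4,4)$), and each copy of $K_{3,2,1,1}$ is isotopic to the fan embedding of $K_{3,2,1,1}$, which realizes $mnl(K_{3,2,1,1}) = 1$ and hence has exactly one link, the odd $(3,4)$-link, and no $(3,3)$-link. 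By the subgraph dictionary above, every $(4,4)$-link lies in exactly one of the former copies and every $(3,4)$- or $(3,3)$-link in one of the latter, so the fan embedding has exactly $2\binom{n}{4}$ $(4,4)$-links, exactly $\binom{n}{3}$ $(3,4)$-links, and no $(3,3)$-links, matching the lower bound.

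The step I expect to require the most care is the cycle classification. Unlike $K_{n,3,1}$, the graph $K_{n,2,1,1}$ has cycles of length up to $8$ and does admit $(3,3)$-links in the abstract, so one must argue carefully both that disjointness collapses every link into a pair of triangles and squares and that the fan embedding leaves no $(3,3)$-link behind; once the ``each link lies in a canonical small subgraph'' dictionary is in place, the remaining counting is routine.
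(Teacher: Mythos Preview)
Your argument is correct and follows essentially the same approach as the paper: the lower bound via the $\binom{n}{4}$ copies of $K_{4,4}$ and $\binom{n}{3}$ copies of $K_{3,2,1,1}$, and the upper bound via the fan embedding, noting that every link sits in one of these canonical subgraphs. If anything, you are more explicit than the paper about the cycle classification and the possibility of $(3,3)$-links, which the paper's proof passes over silently; your observation that any $(3,3)$-link would live inside some $K_{3,2,1,1}$ whose fan embedding has only the single $(3,4)$-link is exactly what closes that gap.
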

\begin{proof}
Since $K_{n,2,1,1}$ contains $K_{n,3,1}$ as a subgraph, $mnl(K_{n,2,1,1}) \geq \binom{n}{3} + 2\binom{n}{4}$.

In the fan embedding of $K_{n,2,1,1}$, any subgraph isomorphic to $K_{4,4}$ is isotopic to the fan embedding of $K_{4,4}$ shown in Figure \ref{F:fans}, which contains exactly two $(4,4)$-links.  Also any subgraph isomorphic to $K_{3,2,1,1}$ is isotopic to the fan embedding of $K_{3,2,1,1}$ shown in Figure \ref{F:fans}, which contains exactly one $(3,4)$-link.  Moreover, in $K_{n,2,1,1}$, any $(4,4)$-link is contained in a subgraph isomorphic to $K_{4,4}$ and any $(3,4)$-link is contained in a subgraph isomorphic to $K_{3,2,1,1}$.  Therefore, the fan embedding of $K_{n,2,1,1}$ contains exactly ${n\choose 3} + 2{n \choose 4}$ links, and is a minimal link embedding.
\end{proof}

For $K_{n,1,1,1,1}$ we need to modify our fan embedding -- we can't put all the edges among the last four vertices together, as we did for the other graphs in Figure \ref{F:fans}.  Instead, one of the edges needs to weave between the fans.  This is best shown using a different diagram for the fan embedding.  Figure \ref{F:Kn1111} shows the best embedding we have found for $K_{n,1,1,1,1}$.

    \begin{figure} [htpb]
    $$\scalebox{.5}{\includegraphics{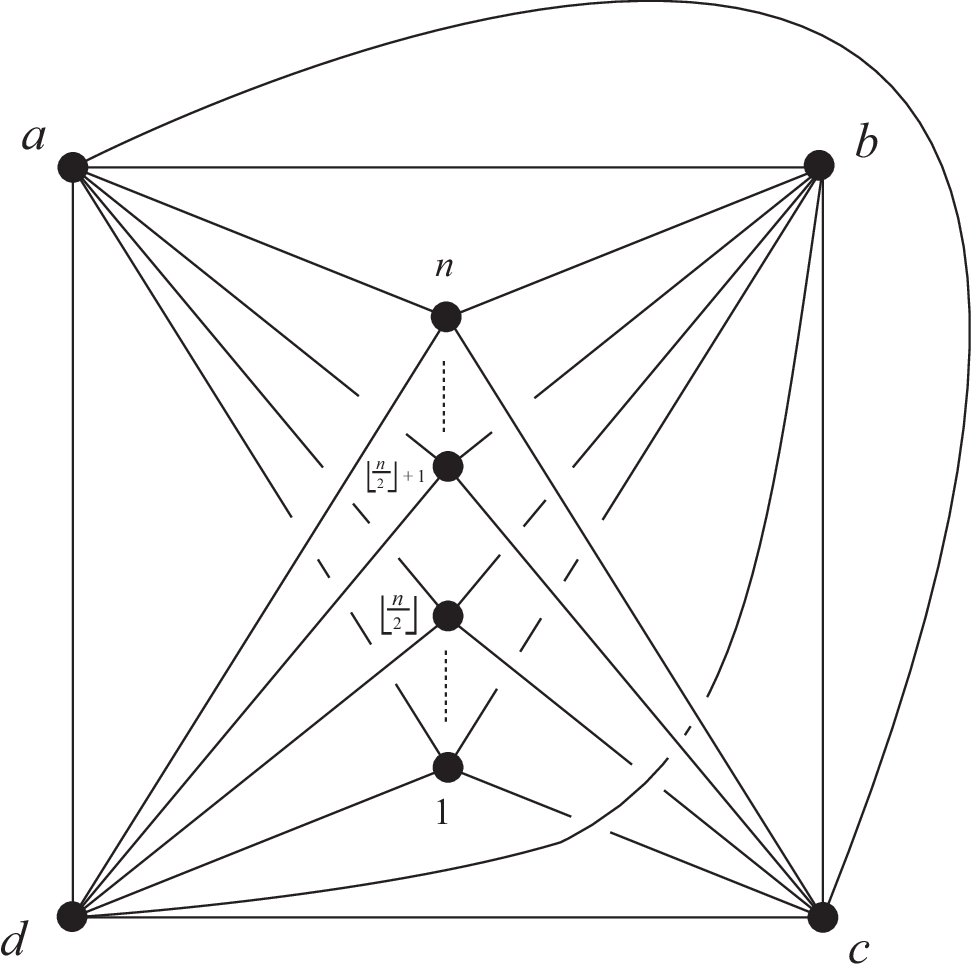}}$$
    \caption{An embedding of $K_{n,1,1,1,1}$} \label{F:Kn1111}
    \end{figure} 

\newpage
\begin{prop}
For $n > 2$, $2{n \choose 4} + 2{n \choose 3} + \left\lceil\frac{n^2-n}{6}\right\rceil \leq mnl(K_{n,1,1,1,1}) \leq  2{n \choose 4} + 2{n \choose 3} + \left\lceil\frac{n^2-2n}{4}\right\rceil$.
\end{prop}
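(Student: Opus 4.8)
Write $K_{n,1,1,1,1}=(1\dots n)(a)(b)(c)(d)$. The first step, as in Propositions~\ref{P:Kn31}--\ref{P:Kn211}, is to identify the possible links. In any cycle the vertices of $\{1,\dots,n\}$ form an independent set, so a cycle containing a numbered vertex uses at least two of $a,b,c,d$, while an all-letter cycle uses at least three; since two disjoint cycles together use at most four of $a,b,c,d$, each cycle of a disjoint pair uses exactly two of $a,b,c,d$ (hence at most two numbered vertices) and $\{a,b,c,d\}$ is split $2$--$2$ between them. So every link is a $(3,3)$-, $(3,4)$- or $(4,4)$-link; a $(4,4)$-link lies in a unique subgraph $K_{4,4}$ (four numbered vertices together with $a,b,c,d$), a $(3,4)$-link lies in a unique subgraph $K_{3,1,1,1,1}$, and a $(3,3)$-link (using two numbered vertices) lies in exactly $n-2$ of the $\binom n3$ subgraphs isomorphic to $K_{3,1,1,1,1}$.

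For the lower bound, apply the cited results. Each of the $\binom n4$ copies of $K_{4,4}$ has at least two odd $(4,4)$-links (Proposition~\ref{P:K44}), and each of the $\binom n3$ copies of $K_{3,1,1,1,1}$ has at least two odd $(3,4)$-links and at least one odd $(3,3)$-link (Proposition~\ref{P:K31111}). By the uniqueness above the $(4,4)$- and $(3,4)$-links so produced are distinct, giving $2\binom n4$ and $2\binom n3$ links. For the $(3,3)$-links, double count the incidences between the $\binom n3$ subgraphs $K_{3,1,1,1,1}$ and the odd $(3,3)$-links of $F$ they contain: there are at least $\binom n3$ such incidences, and each odd $(3,3)$-link accounts for exactly $n-2$ of them, so $F$ has at least $\binom n3/(n-2)=\tfrac{n(n-1)}{6}$, hence at least $\lceil\tfrac{n^2-n}{6}\rceil$, odd $(3,3)$-links. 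Since the three families have distinct cycle-length profiles, $mnl(K_{n,1,1,1,1})\ge 2\binom n4+2\binom n3+\lceil\tfrac{n^2-n}{6}\rceil$.

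For the upper bound I would count the links of the embedding in Figure~\ref{F:Kn1111}. It is the fan embedding of $K_{n,4}$ with the six edges of the $K_4$ on $\{a,b,c,d\}$ adjoined, all but one of them drawn near the common end of the fans and the last --- say $ad$ --- routed to weave between the fans. Both $4$-cycles of a $(4,4)$-link have the form $[\,i\,\alpha\,j\,\beta\,]$ and so use no adjoined edge; thus the $(4,4)$-links are exactly those of the fan embedding of $K_{n,4}$, namely $2\binom n4$ of them. The weaving edge meets only cycles through $a$ and $d$, while the square of a $(3,4)$-link uses neither, so each subgraph $K_{3,1,1,1,1}$ still carries exactly two odd $(3,4)$-links, for a total of $2\binom n3$. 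Finally, a $(3,3)$-link is a pair of triangles $[\,i\,\alpha\,\beta\,],[\,j\,\gamma\,\delta\,]$ with $\{\alpha,\beta,\gamma,\delta\}=\{a,b,c,d\}$; the two splits of $\{a,b,c,d\}$ not using $\{a,d\}$ contribute no link, and for the split $\{\{a,d\},\{b,c\}\}$ one reads off from the figure that, when the weave is threaded past one near-half of the fans from above and the other near-half from below, the number of linked pairs is $\binom{\lceil n/2\rceil}{2}+\binom{\lfloor n/2\rfloor}{2}=\lceil\tfrac{n^2-2n}{4}\rceil$. Adding the three counts gives the upper bound.

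The main obstacle is the upper bound: one must verify that the weaving edge introduces no extra $(3,4)$- or $(4,4)$-link and no further nontrivial link of any type (in particular no even nontrivial link), and then carry out the geometric bookkeeping that pins down exactly which $(3,3)$ triangle-pairs become linked and shows the balanced threading of the weave to be optimal. The lower bound, by contrast, is a short counting argument on top of Propositions~\ref{P:K44} and~\ref{P:K31111}.
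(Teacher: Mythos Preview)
Your approach is essentially the paper's. The lower bound argument is identical: count $K_{4,4}$ and $K_{3,1,1,1,1}$ subgraphs, invoke Propositions~\ref{P:K44} and~\ref{P:K31111}, and divide the $(3,3)$-link incidence count by $n-2$. For the upper bound the paper uses exactly the embedding of Figure~\ref{F:Kn1111} (with weaving edge $\overline{bd}$ rather than your $\overline{ad}$, a relabeling) and splits the count into $(4,4)$-, $(3,4)$- and $(3,3)$-links just as you do.

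There is one real gap in your $(3,4)$-link step. You argue that the square of a $(3,4)$-link ``uses neither,'' and indeed the square never contains the weaving edge since its two letters are non-adjacent. But the \emph{triangle} can: when the $2$--$2$ split of $\{a,b,c,d\}$ puts $\{a,d\}$ on the triangle side, the triangle is $[i\,a\,d]$ and runs through the weave. For such a pair the linking with the complementary square $[j\,b\,k\,c]$ genuinely depends on where $i,j,k$ sit relative to the weave, so ``each $K_{3,1,1,1,1}$ still carries exactly two $(3,4)$-links'' does not follow from your observation alone. The paper fills this gap by observing that every $K_{3,1,1,1,1}$ subgraph of the embedding is isotopic to one of four explicit pictures $F_3^0,\dots,F_3^3$ (Figure~\ref{F:K31111}), each verified to contain exactly two $(3,4)$-links. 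Likewise, for the $(3,3)$-links the paper classifies the $K_{2,1,1,1,1}$ subgraphs into three isotopy types $F_2^0,F_2^1,F_2^2$ (Figure~\ref{F:K21111}) and checks each, which amounts to the same computation you sketch with the split $\{a,d\}/\{b,c\}$ and the balanced threading.
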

\begin{proof}
We first prove the lower bound.  Any embedding $F = (1\dots n)(a)(b)(c)(d)$ of $K_{n,1,1,1,1}$ contains $n \choose 4$ subgraphs $(i_1i_2i_3i_4)(abcd)$ isomorphic to $K_{4,4}$.  So, by Proposition \ref{P:K44}, $F$ contains at least $2{n \choose 4}$ odd $(4,4)$-links.  Furthermore, $F$ contains $n\choose 3$ subgraphs isomorphic to $K_{3,1,1,1,1}$.  By Proposition \ref{P:K31111}, each of these subgraphs contains at least two odd $(3,4)$-links and one odd $(3,3)$-link.  This gives $2{n\choose 3}$ odd $(3,4)$-links and $n\choose 3$ odd $(3,3)$-links.  However, the $(3,3)$-links may not all be distinct; a given $(3,3)$-link uses only two of the vertices from $\{1,\dots,n\}$, so it will appear in $n-2$ different subgraphs isomorphic to $K_{3,1,1,1,1}$.  So there may be as few as $\frac{1}{n-2}{n \choose 3} = \frac{1}{n-2}\frac{n(n-1)(n-2)}{6} = \frac{n^2-n}{6}$ distinct odd $(3,3)$ links (since $n > 2$).  Since the number of links must be an integer, $F$ must contain at least $\left\lceil\frac{n^2-n}{6}\right\rceil$ odd $(3,3)$ links.  Adding up the three kinds of links gives the desired lower bound.

To prove the upper bound, we will describe an embedding of $K_{n,1,1,1,1}$ with this many links.  The embedding $F = (1\dots n)(a)(b)(c)(d)$ is shown in Figure \ref{F:Kn1111}.  If we remove the edge $\overline{bd}$, we get an embedding of $K_{n,2,1,1}$ which is isotopic to the fan embedding.  The edge $\overline{bd}$ is drawn in the ``middle" of the $n$ fans -- i.e. it crosses {\em over} edge $\overline{id}$ for $i \leq \lfloor\frac{n}{2}\rfloor$, and {\em under} edge $\overline{id}$ for $i \geq \lfloor\frac{n}{2}\rfloor+1$.  We will show that this embedding has $2{n\choose 4}$ odd $(4,4)$-links, $2{n\choose 3}$ odd $(3,4)$-links and $\left\lceil\frac{n^2-2n}{4}\right\rceil$ odd $(3,3)$-links, and no others.  Since at least half the vertices in any cycle must be selected from $\{a,b,c,d\}$, and any cycle must use at least two of these vertices, there are no links using cycles of length 5 or more.  We first consider the $(4,4)$-links.  The cycles in a $(4,4)$-link will not use any of the edges between the vertices $a,b,c,d$, so the number of $(4,4)$-links in $F$ is the number in the subgraph isomorphic to $K_{n,4}$.  But the embedding of this subgraph is isotopic to the fan embedding, and contains $2{n\choose 4}$ $(4,4)$-links (all odd), by Proposition \ref{P:Kn4}.

Every $(3,4)$-link is contained in a subgraph isomorphic to $K_{3,1,1,1,1}$.  Depending on the choice of the three independent vertices, every subgraph of $K_{3,1,1,1,1}$ in $F$ is isotopic to one of the embeddings $F_3^0$, $F_3^1$, $F_3^2$ or $F_3^3$ shown in Figure \ref{F:K31111}.  All of these embeddings have exactly two $(3,4)$-links (both odd); since each $(3,4)$-link uses all seven vertices in $K_{3,1,1,1,1}$, the links from different subgraphs are distinct.  So $F$ contains exactly $2{n\choose 3}$ $(3,4)$-links (all odd).

    \begin{figure} [htpb]
    $$\scalebox{.9}{\includegraphics{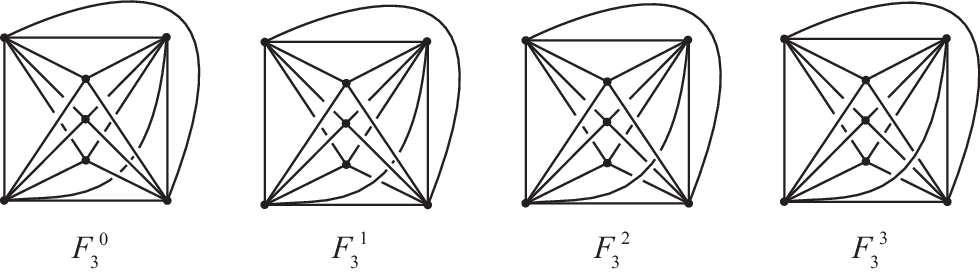}}$$
    \caption{Embeddings of $K_{3,1,1,1,1}$} \label{F:K31111}
    \end{figure} 

Finally, every $(3,3)$-link is contained in a subgraph isomorphic to $K_{2,1,1,1,1}$.  Depending on the choice of the two independent vertices, every subgraph of $K_{2,1,1,1,1}$ in $F$ is isotopic to one of the embeddings $F_2^0$, $F_2^1$, or $F_2^2$ shown in Figure \ref{F:K21111}.  $F_2^0$ and $F_2^2$ each contain one odd $(3,3)$-link, while $F_2^1$ contains no links.  So the number of $(3,3)$-links is equal to the number of embedded subgraphs isotopic to $F_2^0$ or $F_2^2$, which is the number of ways of choosing two vertices $i,j$ so that either $i,j \leq \frac{n}{2}$ or $i,j > \frac{n}{2}$.  There are two cases, depending on whether $n$ is odd or even.  If $n = 2k$ is even, then the number of choices is ${k \choose 2} + {k\choose 2} = 2{k \choose 2} = k(k-1) = k^2-k = \frac{n^2}{4} - \frac{n}{2} = \frac{n^2-2n}{4}$.  This is an integer, so it is also equal to $\left\lceil\frac{n^2-2n}{4}\right\rceil$.  On the other hand, if $n = 2k+1$ is odd, then the number of choices is ${k \choose 2} + {{k+1} \choose 2} = \frac{k(k-1)}{2} + \frac{(k+1)k}{2} = \frac{k(k-1+k+1)}{2} = \frac{k(2k)}{2} = k^2 = \left(\frac{n-1}{2}\right)^2 = \frac{n^2-2n+1}{4} = \frac{n^2-2n}{4} + \frac{1}{4}$.  Since this is an integer which is only $\frac{1}{4}$ more than $\frac{n^2-2n}{4}$, it must be $\left\lceil\frac{n^2-2n}{4}\right\rceil$.  Therefore, in either case, the number of $(3,3)$-links is $\left\lceil\frac{n^2-2n}{4}\right\rceil$.  Adding this to the number of $(4,4)$- and $(3,4)$-links gives the desired upper bound.
\end{proof}

    \begin{figure} [htpb]
    $$\scalebox{.9}{\includegraphics{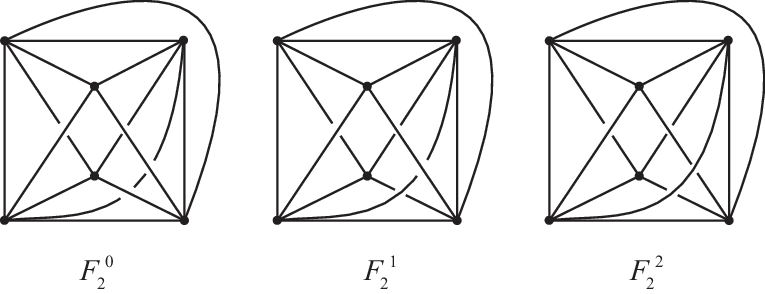}}$$
    \caption{Embeddings of $K_{2,1,1,1,1}$} \label{F:K21111}
    \end{figure} 

It is worth observing that the difference between the upper and lower bounds is $O(n^2)$, while the bounds themselves are $O(n^4)$, so the difference is relatively small compared to the bounds.  Table \ref{Ta:bounds} shows how they compare for $n \leq 12$.  In particular, the bounds agree for $n=5$, so $mnl(K_{5,1,1,1,1}) = 34$.

\begin{table} [htpb]
\begin{center}
\begin{tabular}{c|c|c|c|c|c|c|c|c|c|c}
	$n$ & 3 & 4 & 5 & 6 & 7 & 8 & 9 & 10 & 11 & 12 \\ \hline
	$2{n \choose 4} + 2{n \choose 3} + \left\lceil\frac{n^2-n}{6}\right\rceil$ & 3 & 12 & 34 & 75 & 147 & 262 & 432 & 675 & 1009 & 1452 \\ \hline
	$2{n \choose 4} + 2{n \choose 3} + \left\lceil\frac{n^2-2n}{4}\right\rceil$ & 3 & 12 & 34 & 76 & 149 & 264 & 436 & 680 & 1015 & 1460 \\
\end{tabular}
\end{center}
\caption{Upper and lower bounds for $mnl(K_{n,1,1,1,1})$} \label{Ta:bounds}
\end{table}

\section{Complete partite graphs with 9 vertices} \label{S:9vertex}

Now that we have dealt with all complete partite graphs where all but four vertices are in one partition, we consider more complex complete partite graphs.  Fleming and the second author \cite{fm} considered complete partite graphs with 8 or fewer vertices; we will consider complete partite graphs with 9 vertices.  Our results are summarized in Table \ref{Ta:9vertex}.

We do not list the graphs which have unlinked embeddings by Proposition \ref{P:unlinked} (in fact, since these graphs do not contain pairs of disjoint cycles, any embedding is unlinked).  The values for the first five graphs ($K_{5,4}$ through $K_{5,1,1,1,1}$) follow from the results of Section \ref{S:general}, when $n = 5$.  Determining $K_{4,4,1}$ is significantly more difficult, and is the topic of Section \ref{SS:K441}.  It quickly becomes clear that the subsequent graphs will require even more elaborate arguments, so for all the graphs after $K_{4,4,1}$, we have only determined upper bounds for $mnl(G)$.  Appendix \ref{S:9vertexdiagrams} provides embeddings which realize the minimum number of links (where known), or the upper bound given in Table \ref{Ta:9vertex}.  The embedding for $K_9$ is based on the minimal crossing diagram presented by Guy \cite{gu}.
    
\begin{rem}
Our methods only provide lower bounds on the minimum number of links {\it with non-zero linking number.}  For the exact values of $mnl(G)$ provided in Table \ref{Ta:9vertex}, we have checked that the embeddings in Appendix \ref{S:9vertexdiagrams} do not contain any non-trivial links with trivial linking number by using a computer to list possible candidates for such links and then checking them by hand.  For the subsequent graphs, there are far more candidates.  In the future, we hope to refine our program to reduce the number of possibilities to a size that can be checked by hand.  In the meantime, the upper bound listed in Table \ref{Ta:9vertex} is really only an upper bound on the minimum number of links with non-zero linking number.
\end{rem}

\subsection{Minimum number of links for $K_{4,4,1}$} \label{SS:K441}

In this section, we will show that $mnl(K_{4,4,1}) = 74$.  We begin with a lemma that may be useful for many graphs; this is a variation on a lemma proved by Johnson and Johnson \cite{jj}, and is proved similarly.

\begin{lem} \label{L:K33}
Let $F$ be an embedding of $K_{3,3}$, and let $C$ be a loop in $S^3$ disjoint from $F$ which links at least 1 cycle in $F$ with odd linking number.  Then $C$ must link exactly 8 cycles in $F$ with odd linking number.  Furthermore, $C$ must link either 4 squares and 4 hexagons, or 6 squares and 2 hexagons in $F$.
\end{lem}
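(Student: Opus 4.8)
The plan is to analyze the linking numbers of $C$ with all cycles of $K_{3,3}$ simultaneously, using a mod-2 linear-algebra argument in the homology of the complement of $C$. Write $K_{3,3} = (abc)(123)$. The cycles of $K_{3,3}$ are the $9$ squares $[aibj]$ (for $i \neq j$ — actually the $4$-cycles using two left and two right vertices, of which there are $\binom{3}{2}^2 = 9$) and the $6$ hexagons (Hamiltonian $6$-cycles). First I would set up coordinates: assign to each directed edge $\overline{vi}$ of $K_{3,3}$ the value $\ell(v,i) = \lk(C, \overline{vi} \cup \overline{iv_0} \cup \overline{v_0 i_0} \cup \overline{i_0 v})$ relative to a fixed spanning tree, or more cleanly, observe that $\lk(C, -)$ descends to a homomorphism from $H_1(K_{3,3}) \to \mathbb{Z}$, hence mod $2$ to a functional on the $\mathbb{F}_2$-cycle space $Z \cong \mathbb{F}_2^4$. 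So the mod-$2$ linking data is determined by a nonzero vector $\phi \in Z^* \cong \mathbb{F}_2^4$ (nonzero because $C$ links at least one cycle oddly).

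Next I would count, for each of the $15$ nonzero functionals $\phi$, how many of the $15$ cycles (9 squares, 6 hexagons) it evaluates to $1$ on. The cycle space has a basis of four squares sharing a common edge, say; every square and every hexagon is a known $\mathbb{F}_2$-combination of these. Concretely, each of the $15$ cycles is a specific nonzero vector in $\mathbb{F}_2^4$, and one checks that among these $15$ vectors, the squares account for $9$ and the hexagons for $6$ of the $15$ nonzero points of $\mathbb{F}_2^4$ — in fact \emph{every} nonzero vector of $\mathbb{F}_2^4$ arises exactly once as a cycle of $K_{3,3}$ (this is the clean structural fact that makes the lemma work; $K_{3,3}$ has exactly $15$ cycles and $\dim Z = 4$). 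Then for any fixed nonzero $\phi$, the set $\{v \in \mathbb{F}_2^4 \setminus \{0\} : \phi(v) = 1\}$ has size $8$, which gives the first assertion: $C$ links exactly $8$ cycles oddly. For the second assertion, I would tabulate, over a set of representatives of the $\mathrm{GL}_4(\mathbb{F}_2)$-orbits — better, just directly over all $15$ choices of $\phi$, using the symmetry group of $K_{3,3}$ to cut down cases — how many of those $8$ cycles are squares versus hexagons, and show the split is always $4$--$4$ or $6$--$2$. This is a finite check: $\phi$ ranges over $15$ values, and by the vertex-relabeling and left-right-swap symmetries of $K_{3,3}$ one only needs a couple of representative computations.

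The main obstacle is the bookkeeping in identifying which nonzero vectors of $\mathbb{F}_2^4$ correspond to squares and which to hexagons, and then confirming the $4$--$4$ / $6$--$2$ dichotomy holds for all functionals rather than, say, some $\phi$ giving a $5$--$3$ or $3$--$5$ split. I expect this to come down to the following: the six hexagons, as vectors in $\mathbb{F}_2^4$, are not in "general position" — they satisfy linear dependencies inherited from $K_{3,3}$'s structure (the sum of all hexagon vectors, appropriate triples summing to squares, etc.), and it is exactly these dependencies that force a hyperplane $\{\phi = 0\}$ to contain either $2$ or $4$ of the hexagon-vectors among its $7$ nonzero points, never $1$ or $3$. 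Making that parity constraint precise — perhaps by exhibiting the hexagons as a coset or near-coset of a subspace, or by noting each square is a sum of two hexagons — is the crux; once it is in hand, the square counts $9 - (\text{hexagons in }\{\phi=1\})$ follow automatically, and the two cases $8 - 4 = 4$ squares (so $4,4$) and $8 - 2 = 6$ squares (so $6,2$) drop out. Finally I would note this argument is identical in spirit to Johnson and Johnson \cite{jj}, differing only in tracking the square/hexagon split rather than just the total count.
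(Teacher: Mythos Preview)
Your approach is correct and genuinely different from the paper's. The paper works over $\mathbb{Z}$ with explicit signed crossing numbers $c_{i,j}$, solves for all fifteen linking numbers in terms of $h_3,h_4,h_5,h_6$, and then runs an eight-case parity analysis. You instead pass immediately to the $\mathbb{F}_2$-cycle space and exploit the coincidence that $K_{3,3}$ has exactly $15$ simple cycles while $\dim_{\mathbb{F}_2} Z_1(K_{3,3}) = 9-6+1 = 4$; since no two cycles of $K_{3,3}$ are edge-disjoint, the simple cycles biject with the nonzero vectors of $\mathbb{F}_2^4$, and the ``exactly $8$'' statement is then just the size of an affine hyperplane. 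This is cleaner and more conceptual than the paper's elimination-and-case-check, and it explains \emph{why} the answer is $8$ rather than merely verifying it.

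Your identified ``crux'' --- the square/hexagon split --- also resolves more simply than you anticipate. The six hexagons partition into two triples summing to zero in $Z$, namely $H_1+H_4+H_5=0$ and $H_2+H_3+H_6=0$ (these are exactly the relations \eqref{E.h1}--\eqref{E.h2} in the paper, read mod $2$). Hence any nonzero functional $\phi$ is nonzero on exactly $0$ or $2$ hexagons in each triple, giving $2$ or $4$ oddly-linked hexagons in total and forcing the $6$--$2$ or $4$--$4$ split. So your speculative ``coset or near-coset'' description is on target: the hexagons form the union of two $2$-dimensional subspaces (minus zero) meeting only at the origin. What your approach buys is a structural explanation and a proof that scales to similar questions; what the paper's buys is complete explicitness with no appeal to the cycle-space bijection, at the cost of a longer case analysis.
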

\begin{proof}
Let $F$ be an embedding of $K_{3,3}$ with $\{v_1,v_2,v_3\}$ and $\{w_1, w_2, w_3\}$ denoting its two sets of independent vertices.  Orient the edges $\{\overline{v_iw_j} \mid {i,j} \in \{1,2,3\}\}$ from $v_i$ to $w_j$, and denote each oriented edge by $\overrightarrow{v_iw_j}$; we also choose an orientation for $C$.  Consider a diagram for $C \cup F$ (i.e. a projection to $S^2$ where the edges are in general position, and at each crossing we record which edge crosses over the other).  Let $c_{i,j}$ be the number of crossings between $C$ and $\overrightarrow{v_iw_j}$, counted with sign.  $F$ contains 15 cycles: 9 squares and 6 hexagons.  For each cycle in $F$, we can write its linking number with $C$ as a sum of the $c_{i,j}$'s.  Let $s_k = \lk(C,S_k)$ for each square $S_k$ in $F$, and $h_l = \lk(C,H_l)$ for each hexagon $H_l$.  Then:

\begin{align*}
2s_1 &= 2\lk(C,[v_1w_1v_2w_2]) = c_{1,1} - c_{2,1} + c_{2,2} - c_{1,2} \\
2s_2 &= 2\lk(C,[v_1w_1v_2w_3]) = c_{1,1} - c_{2,1} + c_{2,3} - c_{1,3} \\
2s_3 &= 2\lk(C,[v_1w_1v_3w_2]) = c_{1,1} - c_{3,1} + c_{3,2} - c_{1,2} \\
2s_4 &= 2\lk(C,[v_1w_1v_3w_3]) = c_{1,1} - c_{3,1} + c_{3,3} - c_{1,3} \\
2s_5 &= 2\lk(C,[v_1w_2v_2w_3]) = c_{1,2} - c_{2,2} + c_{2,3} - c_{1,3} \\
2s_6 &= 2\lk(C,[v_1w_2v_3w_3]) = c_{1,2} - c_{3,2} + c_{3,3} - c_{1,3} \\
2s_7 &= 2\lk(C,[v_2w_1v_3w_2]) = c_{2,1} - c_{3,1} + c_{3,2} - c_{2,2} \\
2s_8 &= 2\lk(C,[v_2w_1v_3w_3]) = c_{2,1} - c_{3,1} + c_{3,3} - c_{2,3} \\
2s_9 &= 2\lk(C,[v_2w_2v_3w_3]) = c_{2,2} - c_{3,2} + c_{3,3} - c_{2,3} \\
2h_1 &= 2\lk(C,[v_1w_1v_2w_2v_3w_3]) = c_{1,1} - c_{2,1} + c_{2,2} - c_{3,2} + c_{3,3} - c_{1,3} \\
2h_5 &= 2\lk(C,[v_1w_3v_2w_1v_3w_2]) = c_{1,3} - c_{2,3} + c_{2,1} - c_{3,1} + c_{3,2} - c_{1,2} \\
2h_2 &= 2\lk(C,[v_1w_1v_2w_3v_3w_2]) = c_{1,1} - c_{2,1} + c_{2,3} - c_{3,3} + c_{3,2} - c_{1,2} \\
2h_4 &= 2\lk(C,[v_1w_2v_2w_3v_3w_1]) = c_{1,2} - c_{2,2} + c_{2,3} - c_{3,3} + c_{3,1} - c_{1,1} \\
2h_3 &= 2\lk(C,[v_1w_2v_2w_1v_3w_3]) = c_{1,2} - c_{2,2} + c_{2,1} - c_{3,1} + c_{3,3} - c_{1,3} \\
2h_6 &= 2\lk(C,[v_1w_3v_2w_2v_3w_1]) = c_{1,3} - c_{2,3} + c_{2,2} - c_{3,2} + c_{3,1} - c_{1,1} 
\end{align*}

We can eliminate the variables $c_{i,j}$ to write $s_1,\dots, s_9$ and $h_1, h_2$ in terms of $h_3, h_4, h_5$ and $h_6$.

\begin{align}
3s_1 &= -2h_3 - 2h_4 - h_5 - h_6 \label{E.s1} \\
3s_2 &= -h_3 - h_4 - 2h_5 - 2h_6 \label{E.s2} \\
3s_3 &= -h_3 - h_4 + h_5 - 2h_6 \label{E.s3} \\
3s_4 &= h_3 - 2h_4 - h_5 - h_6 \label{E.s4} \\
3s_5 &= h_3 + h_4 - h_5 - h_6 \label{E.s5} \\
3s_6 &= 2h_3 - h_4 - 2h_5 + h_6 \label{E.s6} \\
3s_7 &= h_3 + h_4 + 2h_5 - h_6 \label{E.s7} \\
3s_8 &= 2h_3 - h_4 + h_5 + h_6 \label{E.s8} \\
3s_9 &= h_3 - 2h_4 - h_5 + 2h_6 \label{E.s9} \\
h_1 &= -h_4 - h_5 \label{E.h1} \\
h_2 &= -h_3 - h_6 \label{E.h2} 
\end{align}

We first observe that if all the $h_i$'s are even, so are all the $s_j$'s.  Thus, if $C$ has odd linking with a square in $F$, it must also have odd linking with at least one hexagon in $F$.

The converse is also true.  Observe from the crossing equations that every $h_i = \lk(C,H_i)$ is the sum or difference of two $s_j$'s.  As an example, consider $h_1$:

\begin{align*}
h_1 &= \frac{1}{2}(c_{1,1} - c_{2,1} + c_{2,2} - c_{3,2} + c_{2,3} - c_{1,3}) \\
&= \frac{1}{2}(c_{1,1} - c_{2,1} + c_{2,2} - c_{1,2} + c_{1,2} - c_{3,2} + c_{2,3} - c_{1,3}) \\
&= \frac{1}{2}(2s_1 + 2s_2) \\
&= s_1 + s_2
\end{align*}

Thus, if $h_i$ is odd, some $s_j$ (where $S_j$ shares three edges with $H_i$) must also be odd.  So $C$ must have odd linking with both a square and a hexagon which share three edges.  Without loss of generality, assume that $C$ links $S_1$ and $H_1$ with odd linking number, so $s_1$ and $h_1$ are odd.  With $h_1$ odd, \eqref{E.h1} requires that exactly one of $h_4$ and $h_5$ is odd.  By \eqref{E.h2} either none of $h_2$, $h_3$ or $h_6$ is odd, or exactly 2 of them are odd.  This leaves us with 8 cases:

{\sc Case 1:} $h_1$ and $h_4$ are odd and $h_2$, $h_3$, $h_5$ and $h_6$ are even.  Since $s_1$ is odd, this contradicts equation \eqref{E.s1}.

{\sc Case 2:} $h_1$ and $h_5$ are odd and $h_2$, $h_3$, $h_4$ and $h_6$ are even.  Then $s_1$, $s_3$, $s_4$, $s_5$, $s_8$ and $s_9$ must also be odd.

{\sc Case 3:} $h_1$, $h_2$, $h_3$ and $h_4$ are odd, $h_5$ and $h_6$ are even.  This contradicts equation \eqref{E.s1}.

{\sc Case 4:} $h_1$, $h_2$, $h_4$ and $h_6$ are odd, $h_3$ and $h_5$ are even.  Then, $s_1$, $s_2$, $s_3$ and $s_4$ must also be odd.

{\sc Case 5:} $h_1$, $h_2$, $h_3$ and $h_5$ are odd, $h_4$ and $h_6$ are even.  Then, $s_1$, $s_2$, $s_7$ and $s_8$ must also be odd.

{\sc Case 6:} $h_1$, $h_2$, $h_5$ and $h_6$ are odd, $h_3$ and $h_4$ are even.  This contradicts equation \eqref{E.s1}.

{\sc Case 7:} $h_1$, $h_3$, $h_4$ and $h_6$ are odd, $h_2$ and $h_5$ are even.  Then, $s_1$, $s_5$, $s_7$ and $s_9$ must also be odd.

{\sc Case 8:} $h_1$, $h_3$, $h_5$ and $h_6$ are odd, $h_2$ and $h_4$ are even.  This contradicts equation \eqref{E.s1}.

Therefore, $C$ has odd linking with either 2 hexagons and 6 squares (in Case 2), or 4 hexagons and 4 squares (in Cases 4, 5, 7).
\end{proof} 

\begin{prop}
$mnl(K_{4,4,1}) = 74$
\end{prop}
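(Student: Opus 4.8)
The claim is proved by separately establishing the upper bound $mnl(K_{4,4,1}) \le 74$ and the lower bound $mnl(K_{4,4,1}) \ge 74$. Write $K_{4,4,1} = (abcd)(1234)(x)$. For the upper bound it is enough to display one embedding with exactly $74$ links; a suitable ``fan-type'' embedding is recorded in Appendix~\ref{S:9vertexdiagrams}, and one checks (with \emph{Plotter}) that it has exactly $74$ pairs of cycles with nonzero linking number, then rules out, by a finite hand-check of the remaining candidate pairs, any nontrivial link of linking number $0$. The real content is the lower bound.

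For the lower bound, the first step is to classify the disjoint pairs of cycles. Since $K_{4,4}$ is bipartite, every odd cycle of $K_{4,4,1}$ must pass through $x$, so a disjoint pair contains at most one odd cycle; together with the bound of $9$ on the total number of vertices used, this leaves exactly four possibilities: $(3,4)$-, $(3,6)$-, $(4,4)$- and $(4,5)$-links. Moreover every triangle has the form $T = [uvx]$, and a cycle disjoint from it must lie in the copy $H_{uv}$ of $K_{3,3}$ spanned by the remaining six non-$x$ vertices, so every odd $(3,4)$- or $(3,6)$-link consists of $T$ together with a square or hexagon of $H_{uv}$.

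The engine of the argument is Lemma~\ref{L:K33}, applied to each of the sixteen triangles $T = [uvx]$ and its complement $H_{uv}$: either $T$ links every cycle of $H_{uv}$ evenly, or it links exactly eight of them (squares and hexagons of $H_{uv}$, hence all disjoint from $T$) with odd linking number. Calling $T$ \emph{active} in the second case, and noting that distinct triangles contribute disjoint families of links, we obtain the exact identity $(\text{number of odd }(3,4)\text{-links}) + (\text{number of odd }(3,6)\text{-links}) = 8A$, where $A$ is the number of active triangles. Hence it suffices to prove $8A + \ell_{44} + \ell_{45} \ge 74$, where $\ell_{44}, \ell_{45}$ are the numbers of odd $(4,4)$- and $(4,5)$-links. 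The supply of lower bounds for the three terms comes from the subgraph results: $K_{4,4,1}$ contains nine subgraphs isomorphic to $K_{4,4}$ (one ``obvious'' one and eight through $x$), each forcing at least two odd $(4,4)$-links by Proposition~\ref{P:K44}; sixteen subgraphs isomorphic to $K_{3,3,1}$, each forcing an odd $(3,4)$-link by Proposition~\ref{P:K331}, where each such link lies in a \emph{unique} one of the sixteen subgraphs and determines an active triangle; and thirty-six sub-pyramids $K_{2,2,1}$, to each of which Lemma~\ref{L:FMCorrected} applies via the $4$-cycle disjoint from it, controlling how the $(3,4)$-, $(4,4)$- and $(4,5)$-links are distributed.

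The main obstacle --- and the reason this is the hardest result in the paper --- is the combinatorial accounting that converts these local statements into $8A + \ell_{44} + \ell_{45} \ge 74$. One organizes it as a case analysis on $A$, laid out on the $4 \times 4$ array of triangles indexed by $\{a,b,c,d\} \times \{1,2,3,4\}$: when $A$ is large (say $A \ge 9$) the identity above already gives at least $72 + 2 = 74$; when $A$ is small, the sixteen forced $(3,4)$-links, the parity data of Lemma~\ref{L:K33} (which pins down, up to permuting rows and columns, exactly which four or six squares an active triangle links), and the pyramid data of Lemma~\ref{L:FMCorrected} must be combined to force enough odd $(4,4)$- and $(4,5)$-links to make up the deficit $74 - 8A$ --- all while never counting a link twice, the delicate case being the $(4,4)$- and $(4,5)$-links through $x$, which are invisible to the triangle count. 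Matching this lower bound against the count for the embedding in Appendix~\ref{S:9vertexdiagrams} then yields $mnl(K_{4,4,1}) = 74$.
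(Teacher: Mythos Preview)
Your setup is sound and matches the paper's scaffolding: the classification of link types into $(3,4)$-, $(3,6)$-, $(4,4)$- and $(4,5)$-links is correct, the identity $(\text{odd }(3,4)\text{-links})+(\text{odd }(3,6)\text{-links})=8A$ follows cleanly from Lemma~\ref{L:K33}, and the bound $\ell_{44}\ge 18$ from the nine $K_{4,4}$ subgraphs is right. (Your ``$72+2$'' for $A\ge 9$ undersells this: with $\ell_{44}\ge 18$, already $A\ge 7$ gives $56+18=74$, so the genuinely hard cases are $A\in\{4,5,6\}$, and one must separately prove $A\ge 4$ --- from your ingredients alone only $A\ge \lceil 16/6\rceil = 3$ follows, since an active triangle links at most six squares.)

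The proposal has a genuine gap, however: your final paragraph describes what the proof must accomplish rather than carrying it out. The sentence ``must be combined to force enough odd $(4,4)$- and $(4,5)$-links to make up the deficit $74-8A$'' is precisely where all the content lies, and you have not done any of that accounting. In the paper this step occupies five intermediate claims and a six-way case split, and the organizing principle is different from yours: instead of stratifying by $A$, the paper fixes the subgraph $G=(abcd)(1234)$, proves (Claims~\ref{C:4triangles}--\ref{C:2squares}) that $G$ has \emph{exactly two} odd $(4,4)$-links, and then does a case analysis on the Lemma~\ref{L:FMCorrected} types of the four squares in those two links (type~1 versus types~5/6). Each case either contradicts $nl(F)\le 74$ or, in the all-type-1 case, yields $32+18+12+12=74$ on the nose. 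Your $A$-based scheme may be workable, but as written it stops exactly where the real work begins.
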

\begin{proof}
We first observe that the embedding of $K_{4,4,1}$ in Appendix \ref{S:9vertexdiagrams} contains 74 links, so $mnl(K_{4,4,1}) \leq 74$.  Let $F=(abcd)(1234)(x)$ be a minimum link embedding of $K_{4,4,1}$, so $nl(F) \leq 74$.  We need to show that $nl(F) = 74$.

We first observe that $K_{4,4,1}$ contains $\binom{4}{3}\binom{4}{3} = 16$ subgraphs isomorphic to $K_{3,3,1}$, and so $F$ contains at least 16 odd $(3,4)$-links by Proposition \ref{P:K331}.  Also, $K_{4,4,1}$ contains $1 + \binom{4}{3} + \binom{4}{3} = 9$ subgraphs isomorphic to $K_{4,4}$, and so $F$ contains at least $2*9 = 18$ odd $(4,4)$-links by Proposition \ref{P:K44}.  We will be particularly interested in the subgraph $G = (abcd)(1234)$.  Ultimately, we will show that, if $F$ is a minimum link embedding, then $G$ must contain exactly two odd $(4,4)$-links.

\begin{claim} \label{C:4triangles}
$F$ must have at least 4 distinct triangles in odd $(3,4)$-links.
\end{claim}
\begin{proof}
As we saw above, every subgraph of $F$ isomorphic to $K_{3,3,1}$ must contain an odd $(3,4)$-link.  Without loss of generality, consider the subgraph $(abc)(123)(x)$ and let $[xa1]$ be a triangle in an odd $(3,4)$-link.  Now consider the subgraph $(bcd)(234)(x)$.  This subgraph must also contain a triangle in an odd $(3,4)$-link, which is not $[xa1]$.  Without loss of generality let this triangle be $[xb2]$.  Next consider the subgraph $L = (x)(acd)(234)$, which contains neither $[xa1]$ nor $[xb2]$.  There are two cases:

\medskip

\noindent {\sc Case 1:} The linked triangle in $L$ contains either $a$ or $2$, or both.  Now consider the subgraph $(bcd)(134)(x)$.  This subgraph contains a triangle in an odd $(3,4)$-link which contains neither $a$ nor $2$, and so is distinct from the previous three linked triangles.  Hence $F$ has at least 4 distinct triangles in odd $(3,4)$-links.

\medskip

\noindent{\sc Case 2:} The linked triangle in $L$ does not involve either of the vertices $a$ or $2$.  Without loss of generality, let this triangle be $[xc3]$.  Towards contradiction, assume that $[xa1]$, $[xb2]$ and $[xc3]$ are the only three triangles in odd $(3,4)$-links in $F$.  Consider a square in $F$ that links $[xa1]$.  Without loss of generality, let this square be $[b2c3]$.  As $[xa4]$ and $[xd1]$ are not in odd $(3,4)$-links, $[b2c3]$ also must have odd linking with $[x4a1] = [xa1] + [xa4]$ and $[xa1d] = [xa1] + [xd1]$.

By this argument, each of the 16 odd $(3,4)$-links in $F$ induces two $(4,4)$-links.  These links are all distinct, since they involve different sets of vertices, so $F$ contains at least 32 odd $(4,4)$-links.  However, all of these 32 links involve the vertex $x$; in addition, the subgraph $G = (abcd)(1234)$ contains at least two odd $(4,4)$-links, for a total of 34 odd $(4,4)$-links.

Observe that the complement of a triangle in $F$ is a subgraph isomorphic to $K_{3,3}$.  Hence, by Lemma \ref{L:K33}, each triangle involved in an odd $(3,4)$-link is also in at least two odd $(3,6)$-links.  Thus, our three triangles give us at least 6 $(3,6)$-links.

Now we want to count the number of squares involved in odd $(3,4)$-links.  Since we have at least 16 odd $(3,4)$-links, and only 3 different triangles, one of the triangles must link at least $\lceil{\frac{16}{3}}\rceil = 6$ different squares.  Without loss of generality, say that $[xa1]$ links $k \geq 6$ squares.  Since the only square in the complement of $[xa1]$ which does not involve either $b$ or $2$ is $[c3d4]$, at most one of the $k$ squares could also link $[xb2]$; similarly, at most one could also link $[xc3]$.  Without loss of generality, $[xb2]$ is involved in at least half of the remaining odd $(3,4)$-links, and at most one of the squares that link $[xb2]$ also links $[xa1]$.  This means there are at least $k + \lceil{\frac{16-k}{2}}\rceil - 1$ different squares.  Since $k \geq 6$, the total number of squares is at least
$$k + \left\lceil{\frac{16-k}{2}}\right\rceil - 1 \geq k + \frac{16}{2} - \frac{k}{2} - 1 = 7 + \frac{k}{2} \geq 10$$
So there are at least 10 different squares in odd $(3,4)$-links.

The complement of a square in $F$ that does not involve $x$ is a subgraph isomorphic to $K_{2,2,1}$.  By Lemma \ref{L:FMCorrected}, if a square has odd linking with a triangle, it must have odd linking with two pentagons (or odd linking with one pentagon and non-zero even linking with a pentagon or triangle).  So each square in an odd $(3,4)$-link gives us two new links, for a total of at least 20 additional links.  So the total number of links in $F$ is at least $16 + 34 + 6 + 20 = 76$.  But we know that $nl(F) \leq 74$, so this is a contradiction.  Therefore, $F$ must have at least 4 distinct triangles in odd $(3,4)$-links.
\end{proof}

Now we will consider the odd $(4,4)$-links in $G = (abcd)(1234)$ (considered as a subset of $F$).  Since a square in one of these links does not contain the vertex $x$, its complement in $F$ is isomorphic to $K_{2,2,1}$.  Moreover, since it links a square which does not contain $x$, it must be of type 1, 5, or 6, as described in Lemma \ref{L:FMCorrected}.

\begin{claim} \label{C:3square}
If $G$ has at least 3 odd $(4,4)$-links, then it cannot have a $(4,4)$-link with both squares of type 5 or 6.
\end{claim}
\begin{proof}
Assume that $G$ has at least 3 odd $(4,4)$-links and that both squares in one of the links, denoted $S$, are of type 5 or 6.  Each of the squares in $S$ must then have odd linking with at least 3 triangles in $F$, for a total of $6$ odd $(3,4)$-links.  Observe that each linked triangle must contain the vertex $x$ and an edge from the other square in $S$.  Hence, all 6 triangles must be distinct.  By Lemma \ref{L:K33}, each of these 6 triangles must oddly link a total of 8 squares or hexagons in $F$, yielding $48$ odd $(3,4)$- or $(3,6)$-links.

Since each of the 6 squares in the 3 square-square links in $G$ is of type 1, 5 or 6, they must link at least 2 other cycles (one oddly linked pentagon and either a second oddly linked pentagon or an evenly linked pentagon or triangle) for $12$ additional links.  Since $F$ must contain at least 18 odd $(4,4)$-links, we get a total of at least $48 + 12 + 18 = 78$ links in $F$.  But we know that $nl(F) \leq 74$, which gives the desired contradiction.
\end{proof}

\begin{claim} \label{C:5square}
$G$ must have fewer than 5 odd $(4,4)$-links.
\end{claim}
\begin{proof}
Assume to the contrary, that $G$ has at least 5 odd $(4,4)$-links.  By Claim \ref{C:4triangles}, $F$ has at least 4 triangles involved in odd $(3,4)$-links.  Since the complement of each triangle in $F$ is a subgraph isomorphic to $K_{3,3}$, Lemma \ref{L:K33} implies that there are at least $4 * 8 = 32$ odd $(3,4)$- or $(3,6)$-links in $F$.  As we mentioned above, the 10 squares in the 5 $(4,4)$-links in $G$ must be of type 1, 5, or 6.  So each of these squares is in at least 3 odd $(4,4)$-links, one odd $(4,5)$-link, and at least one other link (linking either a pentagon, or a triangle with even linking number).  This gives 30 $(4,4)$-links and 20 other links, but the 5 links in $G$ are counted twice.  Thus, there must be at least $32 + (30-5) + 20 = 77$ links in $F$.  But $nl(F) \leq 74$, so this is a contradiction.
\end{proof}

\begin{claim} \label{C:5triangles}
If $G$ has at least two odd $(4,4)$-links where one square in each link is type 1 and the other is type 5 or 6, then $F$ must have at least 5 distinct triangles in odd $(3,4)$-links.
\end{claim}
\begin{proof}
Assume $G$ has at least two odd $(4,4)$-links where one square in each link is type 1 and the other is type 5 or 6.  Without loss of generality, one of the $(4,4)$-links is $[a1b2]/[c3d4]$ where $[a1b2]$ is type 1 and $[c3d4]$ is type 5 or 6.  Then $[c3d4]$ must link at least 3 triangles.  Without loss of generality, let these three triangles be $[xa1]$, $[xa2]$ and $[xb1]$.  Square $[a1b2]$ must link at least one triangle.  Without loss of generality, let this triangle be $[xc3]$.

If in fact there are only 4 distinct triangles in $(3,4)$-links in $F$ (the minimum required by Claim \ref{C:4triangles}), the other $(4,4)$-link in $G$ with one square of type 1 and the other of type 5 or 6 must induce 4 $(3,4)$-links linking the same 4 triangles.  However, only $[c3d4]$ can link all of the triangles $[xa1]$, $[xa2]$ and $[xb1]$.  Thus, the second square of type 1 must link exactly 1 of $[xa1]$, $[xa2]$ and $[xb1]$, while the second square of type 5 or 6 must link $[xc3]$ and the two remaining triangles.  But, no subgraph isomorphic to $K_{2,2,1}$ can contain $[xc3]$ and two of $[xa1]$, $[xa2]$ and $[xb1]$, as this would require 6 vertices.  Thus $F$ must have at least 5 distinct triangles in odd $(3,4)$-links.
\end{proof}

\begin{claim} \label{C:type234}
If $G$ has $m$ squares of type 1 and $n$ squares of type 5 or 6, and $m + 3n < 16$, then $F$ contains at least $16 - (m+3n)$ odd $(4,5)$-links where the square is type 2, 3, or 4.
\end{claim}
\begin{proof}
We know that $F$ contains at least 16 odd $(3,4)$ links.  Since each square of type 1 is in one odd $(3,4)$-link, and each square of type 5 or 6 is in 3 odd $(3,4)$-links, the total number of odd $(3,4)$-links where the square is type 1, 5 or 6 is $m + 3n$.  If $m + 3n < 16$, then there must be additional odd $(3,4)$-links where the square is type 2, 3, or 4.  If $p$ is the number of squares in $G$ of type 2, $q$ is the number of type 3 and $r$ is the number of type $4$, then Lemma \ref{L:FMCorrected} implies there are $2p+2q+4r$ additional odd $(3,4)$-links, so $2p+2q+4r \geq 16-(m+3n)$.  There are also, by Lemma \ref{L:FMCorrected}, $2p+2q+4r$ additional odd $(4,5)$-links.  So there are at least $16-(m+3n)$ odd $(4,5)$-links where the square is type 2, 3, or 4.
\end{proof}

We have shown that $G$ must have fewer than 5 odd $(4,4)$-links and, if it has 3 or more odd $(4,4)$ links, it cannot have both squares of one of these links of type 5 or 6.  Our next claim is that $G$ in fact must have exactly 2 odd $(4,4)$-links.

\begin{claim} \label{C:2squares}
$G$ must have exactly 2 odd $(4,4)$-links.
\end{claim}
\begin{proof}
By Proposition \ref{P:K44} and Claim \ref{C:5square}, $G$ has either 2, 3, or 4 odd $(4,4)$-links.  We first consider the possibilities when $G$ has 4 odd $(4,4)$-links, and show that each one leads to a contradiction.  By Claim \ref{C:3square}, none of the $(4,4)$-links in $G$ can have both squares of type 5 or 6.  So in each link either both squares are of type 1, or one is type 1 and the other is type 5 or 6.  If two or more of the links have one square of type 1 and the other of type 5 or 6, then, by Claim \ref{C:5triangles}, $F$ has at least 5 distinct triangles involved in odd $(3,4)$-links; by Lemma \ref{L:K33}, this means there are at least $5*8 = 40$ odd $(3,4)$- and $(3,6)$-links.  Otherwise, there are at least 4 distinct triangles (by Claim \ref{C:4triangles}), and hence at least 32 odd $(3,4)$- and $(3,6)$-links.  Since squares of types 1, 5 and 6 all link three other squares, there are $8*3 - 4 = 20$ distinct odd $(4,4)$-links (we need to subtract 4 because the $(4,4)$-links in $G$ are counted twice).  Finally, the squares of type 1 yield 3 additional links with pentagons, and those of type 5 or 6 give two additional links (either with pentagons, or even links with triangles).  In each case, we can add these all up to get a minimum number of links in $F$, as shown in Table \ref{Ta:4links}; in every case we find that this minimum is larger than 74, which contradicts the fact that $nl(F) \leq 74$.

\begin{table}[htpb]
\begin{center}
\begin{tabular}{c|c|c|c|c|c}
	1/1 links & 1/(5, 6) links & $(3,*)$-links & $(4,4)$-links & other links & total \\ \hline
	0 & 4 & 40 & 20 & $4*3 + 4*2 = 20$ & $80 > 74$ \\
	1 & 3 & 40 & 20 & $5*3 + 3*2 = 21$ & $81 > 74$ \\
	2 & 2 & 40 & 20 & $6*3 + 2*2 = 22$ & $82 > 74$ \\
	3 & 1 & 32 & 20 & $7*3 + 1*2 = 23$ & $75 > 74$ \\
	4 & 0 & 32 & 20 & $8*3 = 24$ & $76 > 74$ \\
\end{tabular}
\end{center}
\caption{Cases when $G$ has 4 odd $(4,4)$-links.} \label{Ta:4links}
\end{table}

We now consider the possibilities when $G$ has 3 odd $(4,4)$-links, and again show that each leads to a contradiction.  In addition to the calculations we made for the case of 4 links, and to recalling that $F$ has at least 18 odd $(4,4)$-links, we have one additional observation.  If there are $m$ squares of type 1 and $n$ squares of type 5 or 6, there are at least $16-(m+3n)$ odd $(4,5)$-links where the square is type 2, 3, or 4, by Claim \ref{C:type234}.  Adding these odd $(4,5)$-links to the total forces $F$ to have more than 74 links, giving our contradiction.  The various cases are summarized in Table \ref{Ta:3links}.

\begin{table} [htpb]
\begin{center}
\begin{tabular}{c|c|c|c|c|c|c}
	1/1 links & 1/(5, 6) links & $(3,*)$-links & $(4,4)$-links & other 1,5,6 links & other 2,3,4 links & total \\ \hline
	0 & 3 & 40 & 18 & $3*3 + 3*2 = 15$ & $16 - (3 + 3*3) = 4$ & $77 > 74$ \\
	1 & 2 & 40 & 18 & $4*3 + 2*2 = 16$ & $16 - (4 + 2*3) = 6$ & $80 > 74$ \\
	2 & 1 & 32 & 18 & $5*3 + 1*2 = 17$ & $16 - (5 + 1*3) = 8$ & $75 > 74$ \\
	3 & 0 & 32 & 18 & $6*3 = 18$ & $16 - 6 = 10$ & $78 > 74$ \\
\end{tabular}
\end{center}
\caption{Cases when $G$ has 3 odd $(4,4)$-links.} \label{Ta:3links}
\end{table}

Since we have ruled out any possibility that $G$ has 3 or 4 odd $(4,4)$-links, $G$ must have exactly 2 odd $(4,4)$-links.
\end{proof}

Now that we know that $G$ has exactly 2 odd $(4,4)$-links, we ask whether the four squares in these links are type 1, 5, or 6.  We will find that, for $F$ to be a minimal link embedding, they must all be of type 1.  We need to consider six cases.

\medskip

\noindent {\sc Case 1:} Assume all four squares are type 5 or 6.  By Lemma \ref{L:FMCorrected}, this means each square is in three odd $(3,4)$-links.  Since the two squares in a single $(4,4)$-link must link distinct triangles, there must be at least 6 distinct triangles in odd $(3,4)$-links.  Thus, by Lemma \ref{L:K33}, $F$ has at least $6 * 8 = 48$ odd $(3,4)$- and $(3,6)$-links.  Each square of type 5 or 6 also links two other cycles (either pentagons, or triangles with even linking number), giving an additional $2 * 4 = 8$ links.  Since we know $F$ contains at least $18$ odd $(4,4)$-links we get a total of at least $48 + 8 + 18 = 74$ links.  However, by Claim \ref{C:type234}, there are also at least $16-3*4 = 4$ more odd $(4,5)$-links with squares of type 2, 3, or 4, for a total of at least 78 links.  Since $nl(F) \leq 74$, this is a contradiction.

\medskip

\noindent {\sc Case 2:} Assume one square is type 1, and the other three are type 5 or 6.  As in the previous case, there are at least $48$ odd $(3,4)$- and $(3,6)$-links.  Each square of type 5 or 6 also links two other cycles (either pentagons, or triangles with even linking number), giving an additional $2 * 3 = 6$ links.  From the square linking case 1 we get 3 odd $(4,5)$-links.  Since $F$ must contain at least 18 odd $(4,4)$-links, we get at least $48 + 18 + 6 + 3 = 75$ links.  Since $nl(F) \leq 74$, this is a contradiction.

\medskip

\noindent{\sc Case 3:} Assume one $(4,4)$-link has both squares of type 1, and the other has both squares of type 5 or 6.  As in Case 2, $F$ has at least 48 odd $(3,4)$- and $(3,6)$-links, 18 odd $(4,4)$-links, $2 * 3 = 6$ odd $(4,5)$-links from the squares of type 1, and $2 * 2 = 4$ other links from the squares of type 5 or 6.  This gives a total of $48 + 18 + 6 + 4 = 76$ links.  Since $nl(F) \leq 74$, this is a contradiction.

\medskip

\noindent{\sc Case 4:} Assume both $(4,4)$-links have one square of type 1 and the other of type 5 or case 6.  By Claim \ref{C:5triangles}, $F$ contains at least 5 distinct triangles in odd $(3,4)$-links.  So, by Lemma \ref{L:K33}, $F$ contains at least $5 * 8 = 40$ odd $(3,4)$- and $(3,6)$-links.  As in the previous cases, there are $2*3 = 6$ odd $(4,5)$ links using the squares of type 1, and $2*2 = 4$ other links using the squares of type 5 or 6.  We also know $F$ contains at least 18 odd $(4,4)$-links, giving $40 + 6 + 4 + 18 = 68$ links.  Moreover, by Claim \ref{C:type234} there are also at least $16 - (2 + 3*2) = 8$ odd $(4,5)$-links using squares of type 2, 3, or 4.  This gives a total of at least $68 + 8 = 76$ total links.  Since $nl(F) \leq 74$, this is a contradiction.

\medskip

\noindent {\sc Case 5:} Assume one square is type 5 or 6, and the other three are type 1.  By Lemma \ref{L:FMCorrected}, these squares are involved in at least $3*3 + 1$ odd $(4,5)$-links, plus one even $(4,5)$- or $(3,4)$-link for a total of $11$ links.  By Claim \ref{C:type234}, there must also be at least $16 - (3 + 3) = 10$ odd $(4,5)$-links using squares of type 2, 3, or 4.  Let $p$, $q$, and $r$ be the number of squares of types 2, 3, and 4, respectively.  Then, as in Claim \ref{C:type234}, these squares are in at least $2p + 2q + 4r \geq 10$ odd $(3,4)$-links, and the same number of odd $(4,5)$-links.  They are also in at least $2p + 4q$ odd $(4,4)$-links, by Lemma \ref{L:FMCorrected}. 

We now claim that $r$ must be 0.  Towards contradiction, assume $r \geq 1$.  Then there is a square $S$ of type 4, which links at least 4 triangles.  Without loss of generality, let $[c3d4]$ be the square of type 5 or 6, and $[a1b2]$ the square of type 1 which links it.  Then $[a1b2]$ links one triangle with odd linking number -- without loss of generality, triangle $[xc3]$.  $[c3d4]$ links three triangles -- without loss of generality, $[xa1]$, $[xa2]$ and $[xb1]$.  For $S$ to link the same 4 triangles, the triangles would all need to be in the complement of the square in $F$.  However, the four triangles involve 7 different vertices, while the complement of $S$ has only 5 vertices.  Thus, there must be at least 5 distinct triangles involved in odd $(3,4)$-links.  By Lemma \ref{L:K33}, this gives at least $5*8=40$ $(3,4)$- or $(3,6)$-links.  In addition, $F$ contains at least 18 odd $(4,4)$-links.  Together with the 20 odd $(4,5)$-links, and one even $(4,5)$- or $(3,4)$-link, counted previously, $F$ contains at least $40+18+21=79$ links.  Since $nl(F) \leq 74$, this is a contradiction.

With $r=0$, there must be $2p + 2q \geq 10$ odd $(3,4)$-links, $2p + 2q \geq 10$ odd $(4,5)$-links and $2p + 4q \geq 10 + 2q \geq 10$ odd $(4,4)$ links using squares of types 2, 3 or 4.  By Claim \ref{C:4triangles}, $F$ contains at least 4 triangles involved in odd $(3,4)$-links, and so by Lemma \ref{L:K33}, $F$ contains at least $4*8=32$ odd $(3,4)$- and $(3,6)$-links.  The four squares of types 1, 5 or 6 each link 3 squares with odd linking number, giving $3*4=12$ odd $(4,4)$-links.  So there are a total of at least 22 odd $(4,4)$-links in $F$.  Together with the 20 odd $(4,5)$-links, and one even $(4,5)$- or $(3,4)$-link, counted previously, $F$ contains at least $32+22+21=75$ total links.  Since $nl(F) \leq 74$, this is a contradiction.

\medskip

\noindent{\sc Case 6:} Assume all four squares are type 1.  By Claim \ref{C:4triangles} and Lemma \ref{L:K33}, there are at least $4*8=32$ odd $(3,4)$- and $(3,6)$-links in $F$.  We also know that $F$ contains at least 18 odd $(4,4)$-links.  By Lemma \ref{L:FMCorrected}, the four squares of type 1 each link 3 pentagons, giving $3*4=12$ odd $(4,5)$-links.  Also, by Claim \ref{C:type234}, there are another $16 - 4 = 12$ odd $(4,5)$-links using squares of type 2, 3, or 4.  This means that $F$ contains at least $32+18+12+12=74$ links.

\medskip

Therefore, $nl(F) \geq 74$.  Since we already know that $nl(F) \leq 74$, we conclude that $nl(F) = 74$, and therefore $mnl(K_{4,4,1}) = 74$.  Moreover, the minimal case occurs only when $G$ contains exactly two odd $(4,4)$-links, with all four squares of type 1.
\end{proof}

\section{Counting knots in complete partite graphs on 8 vertices} \label{S:8vertex}

We now turn to counting the minimal number of knots in a graph, rather than links.  Our results are summarized in Table \ref{Ta:8vertex}.  We only list the complete partite graphs on 8 vertices which are intrinsically knotted, as determined by Blain et. al. \cite{bb2}.  Appendix \ref{S:8vertexdiagrams} shows embeddings realizing the upper bounds in Table \ref{Ta:8vertex}.  It is worth observing that these embeddings also realize the known upper bounds for the minimum number of links (see \cite{fm}), which leads us to pose the following question for future investigation:

\begin{question}
Does every graph have an embedding which simultaneously realizes the minimum linking number and the minimum knotting number?
\end{question}

We have identified knots using the second coefficient of the Conway polynomial.  There are no non-trivial knots with fewer than 8 crossings whose Conway polynomial has a non-zero second coefficient (see \cite{knotinfo}), and in the embeddings shown in Appendix \ref{S:8vertexdiagrams}, there are no cycles with more than 7 self-crossings.  So the second coefficient of the Conway polynomial is sufficient to identify all knotted cycles in these embeddings.

As with links, we establish lower bounds on the minimum number of knots by looking for subgraphs with known numbers of knotted cycles.  We begin with two well-known minor-minimal intrinsically knotted graphs:  $K_7$ and $K_{3,3,1,1}$.  The first result is due to Conway and Gordon \cite{cg}.

\begin{prop} \label{P:k7} \cite{cg}
Every embedding of $K_7$ contains at least one knotted 7-cycle.  Moreover, there is an embedding of $K_7$ with exactly one knotted cycle.
\end{prop}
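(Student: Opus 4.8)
The plan is to reproduce the argument of Conway and Gordon \cite{cg}. Recall that for a knot $K$ the reduction $a_2(K)\bmod 2$ is the Arf invariant, an ambient isotopy invariant, and that the Conway skein relation gives, for a single crossing change, $a_2(K_+)-a_2(K_-)=\lk(L_0)$, where $L_0$ is the two-component link obtained from the oriented smoothing of that crossing. The central object is
$$\sigma(F)\ =\ \sum_{\lambda}a_2(F(\lambda))\ \bmod 2,$$
the sum over all $6!/2=360$ Hamiltonian $7$-cycles $\lambda$ of $K_7$. The first and main step is to show that $\sigma(F)$ does not depend on the embedding $F$. Since any two embeddings of $K_7$ in $S^3$ are related by an ambient isotopy together with finitely many crossing changes, it suffices to check that $\sigma$ is unchanged under a single crossing change.

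If the two strands meeting at the crossing belong to edges sharing a vertex, or to one and the same edge, the oriented smoothing only splits off an unknotted, unlinked loop, so no term $a_2(F(\lambda))$ changes. If they belong to the vertex-disjoint edges $e=\overline{v_1v_2}$ and $f=\overline{v_3v_4}$, then the only $\lambda$ affected are those containing both $e$ and $f$; for such a $\lambda$, deleting $e$ and $f$ leaves two arcs with endpoints among $v_1,\dots,v_4$, and the oriented smoothing reconnects them into a split union of a triangle and a square partitioning the seven vertices, so $a_2(F(\lambda))$ changes by the mod-$2$ linking number of that split pair. It remains to see that, summed over all $48$ Hamiltonian cycles through $e$ and $f$, these contributions vanish mod $2$; I would do this by a direct combinatorial analysis, pairing up the Hamiltonian cycles through $e$ and $f$ so that paired cycles yield the \emph{same} split link (hence equal contributions), which can also be phrased in terms of the Conway--Gordon linking invariants of the $K_6$-subgraphs of $K_7$. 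This gives that $\sigma$ is constant over all embeddings of $K_7$.

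To evaluate the constant, compute $\sigma$ on one explicit embedding $F_0$ -- for instance a straight-edge embedding with the seven vertices in general position, or the specific diagram of \cite{cg} -- in which exactly one Hamiltonian $7$-cycle is a trefoil (so has $a_2=1$) and every other Hamiltonian cycle is unknotted; then $\sigma\equiv 1$. Hence $\sigma(F)\equiv 1$ for every embedding $F$, so every embedding of $K_7$ contains a Hamiltonian cycle with $a_2$ odd, in particular a knotted $7$-cycle. For the ``moreover'' statement the same $F_0$ serves as the required example: in a straight-edge embedding every cycle of length at most $5$ is unknotted (a non-trivial knot has stick number at least $6$), so one only needs to position the vertices so that, apart from the single trefoil $7$-cycle, no $6$-cycle and no other $7$-cycle is knotted -- a finite verification.

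The hard part is the cancellation step of the second paragraph: proving that the per-crossing-change contributions sum to zero mod $2$, i.e. that $\sigma$ is genuinely an invariant of the abstract graph $K_7$. Everything else reduces to a skein-relation computation or to inspecting one concrete embedding.
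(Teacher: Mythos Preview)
The paper does not supply its own proof of this proposition; it simply attributes the result to Conway and Gordon \cite{cg}. Your outline is precisely their argument --- define $\sigma(F)=\sum_\lambda a_2(F(\lambda))\bmod 2$ over Hamiltonian cycles, prove crossing-change invariance via the Conway skein relation, and evaluate on a single explicit embedding --- so in that sense you are aligned with the paper's (implicit) approach, and the overall strategy is correct.

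Your sketch of the key step has two slips, however. The two-component link $L_0$ obtained from the oriented smoothing is not a \emph{split} link: if it were, $\lk(L_0)$ would vanish and there would be nothing to cancel; you presumably just mean a two-component link. Nor are its components ``a triangle and a square partitioning the seven vertices'': removing the two vertex-disjoint edges $e,f$ from a Hamiltonian $7$-cycle leaves two arcs whose vertex counts can be $(2,5)$ just as well as $(3,4)$, depending on where $e$ and $f$ sit along the cycle, and in any case the components of $L_0$ are closed up by short arcs near the crossing point, not by graph edges, so they are not cycles of $K_7$ at all. The cancellation of $\sum_\lambda \lk(L_0(\lambda))$ over the $48$ Hamiltonian cycles through $e$ and $f$ is, as you rightly say, the crux; your proposed pairing is only gestured at, and it is worth reading Conway and Gordon's actual argument carefully here rather than assuming such a pairing exists with the stated property. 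Your treatment of the ``moreover'' clause is fine: the stick-number bound disposes of cycles of length at most $5$, and the remaining $6$- and $7$-cycles in one specific embedding are a finite check, exactly as in \cite{cg}.
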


Motwani, Raghunathan and Saran \cite{mrs} showed that $K_7$ is minor-minimal among intrinsically knotted graphs, so no other graph on 6 or 7 vertices is intrinsically knotted.  The only minor-minimal intrinsically knotted complete partite graph on 8 vertices is $K_{3,3,1,1}$, which Foisy \cite{fo} proved was intrinsically knotted.  Kohara and Suzuki \cite{ks} found an embedding of $K_{3,3,1,1}$ with exactly one trefoil knot (another example of such an embedding is shown in Appendix \ref{S:8vertexdiagrams}).  Thus, we obtain:

\begin{prop} \label{P:k3311}
$mnk(K_{3,3,1,1}) = 1$.
\end{prop}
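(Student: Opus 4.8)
The plan is to establish the equality by combining a lower bound, which comes for free from the fact that $K_{3,3,1,1}$ is intrinsically knotted, with an upper bound realized by a single explicit embedding.

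For the lower bound I would simply invoke Foisy's theorem \cite{fo} that $K_{3,3,1,1}$ is intrinsically knotted. By definition, this means that every embedding $F$ of $K_{3,3,1,1}$ in $S^3$ contains at least one cycle whose image is a nontrivial knot, so $nk(F) \geq 1$ for every $F$, and hence $mnk(K_{3,3,1,1}) \geq 1$. No computation is needed here.

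For the upper bound I would exhibit a single embedding $F_0$ with $nk(F_0) = 1$; the one to use is the diagram of $K_{3,3,1,1}$ displayed in Appendix \ref{S:8vertexdiagrams}. The substance of the argument is the verification that $F_0$ has exactly one knotted cycle. Here I would detect knots by the second coefficient $a_2$ of the Conway polynomial: no cycle of $F_0$ has more than $7$ self-crossings in this diagram, and every nontrivial knot with at most $7$ crossings has $a_2 \neq 0$, so a cycle of $F_0$ is knotted precisely when its $a_2$ is nonzero. Running through all cycles of $K_{3,3,1,1}$ — a finite, if tedious, check, carried out with the program \cite{amt} — one finds that exactly one cycle has $a_2 \neq 0$. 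Hence $nk(F_0) = 1$ and $mnk(K_{3,3,1,1}) \leq 1$, which together with the lower bound gives $mnk(K_{3,3,1,1}) = 1$.

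The only real work is the finite verification for the upper bound, and I expect the one point requiring care there to be confirming that $a_2$ genuinely detects \emph{every} knot that can appear, rather than just a convenient subclass; this is exactly what the bound on the number of self-crossings in the chosen diagram guarantees, so once that diagram is fixed the check is routine.
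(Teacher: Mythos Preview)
Your proposal is correct and matches the paper's approach exactly: the lower bound is Foisy's intrinsic knottedness result for $K_{3,3,1,1}$, and the upper bound is the explicit embedding in Appendix~\ref{S:8vertexdiagrams} with a single knotted cycle, verified via $a_2$ together with the observation that no cycle in that diagram has more than $7$ self-crossings. You have in fact spelled out the $a_2$ justification more explicitly than the paper does in the proof itself (the paper makes that remark once, globally, for all of Section~\ref{S:8vertex}).
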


Foisy's proof does not provide much information as to the length of the knotted cycle.  Foisy and Ludwig \cite{folu} have asked whether every embedding of $K_{3,3,1,1}$ contains a knotted Hamiltonian cycle.  Every example we have found supports the conjecture that the answer to this questions is ``Yes", but we are not able to prove it.  As a result, subgraphs isomorphic to $K_{3,3,1,1}$ are not as useful in counting knotted cycles, since it becomes hard to prove that the counted cycles are distinct.  Fortunately, we can make use of another intrinsically knotted graph.

Motwani, Raghunathan and Saran \cite{mrs} also showed that $\triangle$-Y moves preserve intrinsic knottedness, where a $\triangle$-Y move removes the edges of a 3-cycle, and adds a new vertex adjacent to the vertices of the original 3-cycle, as shown in Figure \ref{F:triangleY}.

    \begin{figure} [htpb]
    $$\scalebox{.3}{\includegraphics{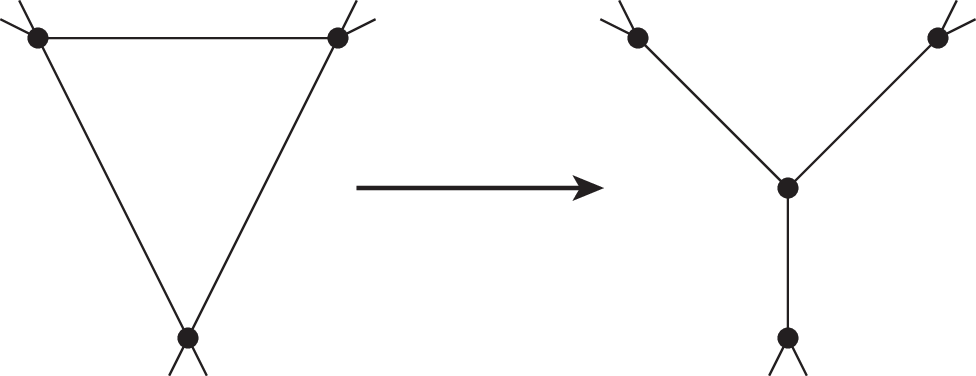}}$$
    \caption{$\triangle$-Y move} \label{F:triangleY}
    \end{figure} 

In particular, the result of performing a $\triangle$-Y move on $K_7$ is the 8-vertex graph $H_8$, shown in Figure \ref{F:H8}.  We will denote the vertices of $H_8$ as $(v)(abc)(1)(2)(3)(4)$, where $v$ has valence 3 (added by the $\triangle$-Y move), $a,b,c$ are three mutually non-adjacent vertices, all adjacent to vertex $v$, with valence 5, and $1,2,3,4$ are four vertices with valence 6 (adjacent to all vertices except $v$).  We will call the vertex of valence 3 the {\em top} vertex, the vertices of valence 5 the {\em middle} vertices, and the vertices of valence 6 the {\em bottom} vertices.  The following lemma has also been proved independently by Nikkuni and Taniyama \cite{nt}, as a corollary to a stronger result about graphs related to $K_7$ by $\triangle$-Y moves.

    \begin{figure} [htpb]
    $$\scalebox{.75}{\includegraphics{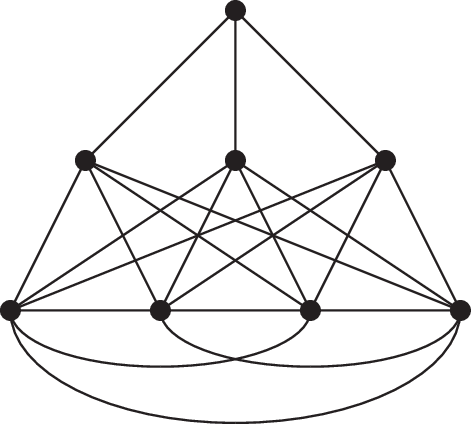}}$$
    \caption{The graph $H_8$} \label{F:H8}
    \end{figure} 
    
\begin{lem} \label{L:h8}
Every embedding of $H_8$ contains either a knotted 8-cycle or a knotted 7-cycle which contains all the bottom vertices.
\end{lem}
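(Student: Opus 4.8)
The plan is to use the fact that $H_8$ arises from $K_7$ by a single $\triangle$-Y move, and to keep careful track of how the cycles correspond under the inverse operation on embeddings. Let $F$ be an embedding of $H_8$. Choose a small ball $B$ about the top vertex $v$ that meets $F$ only in the three edge-ends of $\overline{va}$, $\overline{vb}$, $\overline{vc}$. Delete $v$ together with the portions of those three edges inside $B$, and inside $B$ insert three pairwise disjoint arcs realizing the triangle on $\{a,b,c\}$: the new edge $\overline{ab}$ follows $\overline{va}$ from $a$ into $B$ and then follows $\overline{vb}$ out to $b$, and similarly $\overline{bc}$ follows $\overline{vb}$ then $\overline{vc}$, and $\overline{ca}$ follows $\overline{vc}$ then $\overline{va}$. (For any cyclic order of the three strands at $v$ this reconnection can be done disjointly inside $B$.) Leaving everything outside $B$ unchanged produces an embedding $\widehat{F}$ of $K_7$ on the vertex set $\{a,b,c,1,2,3,4\}$.

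By Proposition \ref{P:k7}, $\widehat{F}$ contains a knotted $7$-cycle $\gamma$; since $\widehat{F}$ is an embedding of $K_7$, $\gamma$ is Hamiltonian and uses all of $a,b,c,1,2,3,4$. I would split into cases by the number of triangle edges $\overline{ab},\overline{bc},\overline{ca}$ appearing in $\gamma$ (a Hamiltonian cycle cannot contain all three, as that forces it to be the triangle $[abc]$). If $\gamma$ contains none of them, then $\gamma$ is already a cycle of $H_8$; it is disjoint from $B$, so its embedded image is literally the same in $F$ and in $\widehat{F}$, and hence $F$ contains $\gamma$ as a knotted $7$-cycle through all four bottom vertices. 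If $\gamma$ contains exactly one of them, say $\overline{ab}$, let $\gamma'$ be the cycle of $H_8$ obtained from $\gamma$ by replacing $\overline{ab}$ with the two edges $\overline{av}$, $\overline{vb}$; then $\gamma'$ is a Hamiltonian $8$-cycle of $H_8$, and since in $\widehat{F}$ the edge $\overline{ab}$ was built to run alongside $\overline{va}$ and $\overline{vb}$, the image of $\gamma'$ in $F$ is ambient isotopic to the image of $\gamma$ in $\widehat{F}$, so $F$ contains $\gamma'$ as a knotted $8$-cycle. If $\gamma$ contains exactly two of them, say $\overline{ab}$ and $\overline{bc}$ (the other subcases are symmetric), so that $\gamma$ passes through $b$ via the path $a$--$b$--$c$, let $\gamma'$ be obtained by replacing that length-two path with $a$--$v$--$c$ (edges $\overline{av},\overline{vc}$); then $\gamma'$ is a $7$-cycle of $H_8$ on the vertex set $\{v,a,c,1,2,3,4\}$, which contains all four bottom vertices. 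The two arcs running alongside $\overline{vb}$ (one inside the old $\overline{ab}$, one inside the old $\overline{bc}$), together with the stub at $b$, form an unknotted finger that can be pushed off, so the image of $\gamma'$ in $F$ is ambient isotopic to the image of $\gamma$ in $\widehat{F}$ and $F$ contains $\gamma'$ as a knotted $7$-cycle through all the bottom vertices. In every case $F$ contains one of the two required knotted cycles.

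The substantive (but standard) point is the last isotopy claim: one must verify that the local surgery inside $B$ -- replacing the Y at $v$ by a triangle, and conversely re-routing a path through the triangle back through $v$ -- is induced by an ambient isotopy of $S^3$ that preserves the knot type of the cycle being tracked. This is the usual lemma that $\triangle$-Y moves preserve the knot types of cycles meeting the move site in the appropriate way (see \cite{mrs}); the only care needed is the bookkeeping above, namely that the three reconnecting arcs inside $B$ can be taken disjoint, and that in the two-edge case the finger to be cancelled lies in $B$ together with a collar neighborhood of the old edge $\overline{vb}$ and is therefore trivially unknotted and split from the rest of $\gamma'$. No information about the linking structure of $H_8$ is needed -- the argument is driven entirely by Proposition \ref{P:k7} plus this local surgery.
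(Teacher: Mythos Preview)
Your proof is correct and follows essentially the same approach as the paper: perform the inverse $\triangle$-Y move locally to obtain an embedding of $K_7$, invoke Proposition~\ref{P:k7} to get a knotted Hamiltonian $7$-cycle, and then split into cases according to how many of the triangle edges $\overline{ab},\overline{bc},\overline{ca}$ that cycle uses. Your treatment is somewhat more explicit about the local construction in the ball $B$ and the isotopy in the two-edge case (the paper phrases the latter as the triangle $\triangle abc$ being null-homotopic), but the argument is the same.
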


\begin{proof} 
Let $\Gamma$ be an embedding of $H_8$, and let $v$ denote the top vertex.  Then there is an embedding $\Gamma'$ of $K_7$ which differs from $\Gamma$ only in a neighborhood of the edges adjacent to $v$, as shown in Figure \ref{F:H8K7}.

    \begin{figure} [htpb]
    $$\scalebox{.75}{\includegraphics{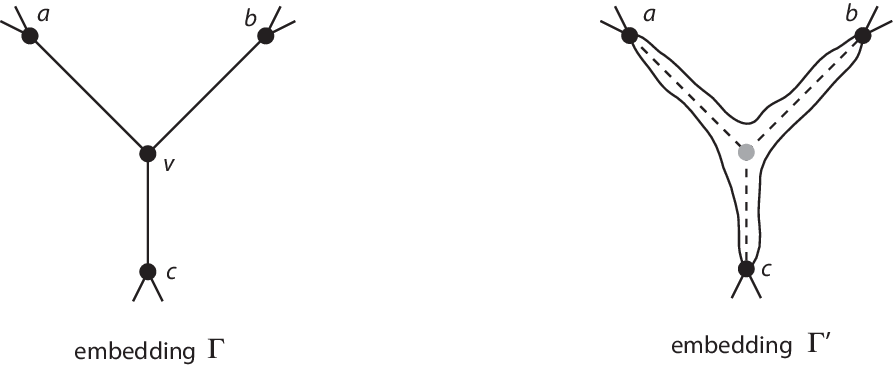}}$$
    \caption{Embeddings $\Gamma$ of $H_8$ and $\Gamma'$ of $K_7$} \label{F:H8K7}
    \end{figure} 

By Proposition \ref{P:k7}, $\Gamma'$ contains a knotted 7-cycle $C$.  If $C$ does not contain the edges $ab$, $ac$ or $bc$ in Figure \ref{F:H8K7}, then it is also a knotted 7-cycle in $\Gamma$ that does not contain $v$, and so contains all the bottom vertices.  If $C$ does contain one of these three edges, say $ab$, then there is a corresponding knotted 8-cycle in $\Gamma$ obtained by replacing $ab$ with $av$ and $vb$.  If $C$ contains two of the three edges, say $ab$ and $bc$, then $C$ is isotopic to the embedded cycle obtained by replacing these two edges with $av$ and $vc$ (since the triangle $\triangle abc$ in $\Gamma'$ is null-homotopic in the complement of the graph).  In this case, we obtain a knotted 7-cycle which contains $v$, but does not contain one of the vertices adjacent to $v$, and so again contains all the bottom vertices.
\end{proof}

So we will count knotted cycles by looking for subgraphs isomorphic to either $K_7$ or $H_8$.  The fact that we know the length of at least one knotted cycle in these subgraphs, by Proposition \ref{P:k7} and Lemma \ref{L:h8}, gives us much more power in counting knotted cycles.  We will call the knotted cycle of length 7 or 8 required by Lemma \ref{L:h8} the {\bf required} knot in a graph isomorphic $H_8$.

\begin{prop} \label{P:k32111}
$mnk(K_{3,2,1,1,1}) = 1$
\end{prop}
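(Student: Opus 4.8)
The plan is to establish the upper bound by exhibiting an embedding, and the lower bound by finding a copy of $H_8$ inside $K_{3,2,1,1,1}$ whose required knot (of length $7$ or $8$) must be a genuine cycle in the ambient graph. For the upper bound, I would point to the explicit embedding in Appendix \ref{S:8vertexdiagrams}, which contains exactly one knotted cycle; since $K_{3,2,1,1,1}$ is intrinsically knotted (by Blain et al. \cite{bb2}), this forces $mnk(K_{3,2,1,1,1}) \geq 1$ and hence equality once the lower bound argument confirms at least one knot by subgraph analysis. (Strictly, intrinsic knottedness already gives the lower bound $\geq 1$, so the real content is producing the embedding realizing $1$; but I will also give the subgraph argument below since it is the model for the harder propositions that follow.)

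First I would set up notation: write $K_{3,2,1,1,1} = (abc)(de)(1)(2)(3)$, so the valences are $5,5,5$ for $a,b,c$; $6,6$ for $d,e$; and $7,7,7$ for $1,2,3$ — wait, let me recount: in $K_{3,2,1,1,1}$ each vertex is adjacent to all vertices not in its own part, so $a,b,c$ have valence $8-3=5$, $d,e$ have valence $8-2=6$, and $1,2,3$ have valence $8-1=7$. Compare this with $H_8 = (v^*)(abc)(1)(2)(3)(4)$, which has one vertex of valence $3$, three of valence $5$, and four of valence $6$. The key observation is that $K_{3,2,1,1,1}$ contains subgraphs isomorphic to $H_8$: to build one, designate one of the valence-$7$ vertices, say $3$, to play the role of the top vertex $v$ of $H_8$ — we must delete edges at $3$ so that it becomes adjacent to exactly three mutually non-adjacent vertices. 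The three non-adjacent vertices $a,b,c$ are available, and in $K_{3,2,1,1,1}$ the remaining four vertices $\{d,e,1,2\}$ should be the bottom vertices (valence $6$ in $H_8$), with $\{a,b,c\}$ the middle vertices. I would check that deleting the edges from $3$ to $d,e,1,2$, together with any superfluous edges, yields exactly $H_8$: the middle vertices $a,b,c$ must be mutually non-adjacent (true in $K_{3,2,1,1,1}$), each adjacent to the top and to all four bottom vertices (true), and the four bottom vertices must be mutually adjacent and each adjacent to all three middle vertices but not to the top. Since $d$ and $e$ are non-adjacent in $K_{3,2,1,1,1}$, I cannot simply take $\{d,e,1,2\}$ as the bottom set; instead I would take $\{1,2,d\}$ together with... this is where care is needed. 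A clean choice: let $1$ be the top vertex, $\{a,b,c\}$ the middle vertices (adjacent to $1$, mutually non-adjacent), and then the bottom four vertices must be mutually adjacent, which forces them to lie in distinct parts — but we only have parts $(de)(2)(3)$ left besides $(abc)$, giving at most the mutually-adjacent set $\{2,3\}$ plus one of $d,e$, only three vertices. So a direct isomorphism onto all of $H_8$ fails, and I must instead find $H_8$ as a subgraph after also using some of $a,b,c$ as bottom vertices — but bottom vertices of $H_8$ are pairwise adjacent, so no two of $a,b,c$ can both be bottom. The honest resolution: $H_8$ is a \emph{subgraph} (not necessarily spanning in the "labels match parts" sense) — I would look for $H_8$ with top vertex $v \in \{1,2,3\}$, bottom vertices a set of four pairwise-adjacent vertices avoiding $v$, and middle vertices three pairwise-non-adjacent vertices all adjacent to $v$ and to all bottoms. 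Four pairwise-adjacent vertices in $K_{3,2,1,1,1} \setminus \{1\}$: take $2,3,d$, and... only three. So in fact $K_{3,2,1,1,1}$ does \emph{not} contain $H_8$, and the lower bound must come purely from intrinsic knottedness of $K_{3,2,1,1,1}$ established in \cite{bb2} (it contains $K_{3,3,1,1}$ as a minor, or is itself on the relevant list).

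Given this, the argument is short: by \cite{bb2}, $K_{3,2,1,1,1}$ is intrinsically knotted, so $mnk(K_{3,2,1,1,1}) \geq 1$; the embedding in Appendix \ref{S:8vertexdiagrams} has exactly one knotted cycle, so $mnk(K_{3,2,1,1,1}) \leq 1$; hence $mnk(K_{3,2,1,1,1}) = 1$. The main obstacle I anticipate is verifying the upper bound — namely, confirming that the exhibited embedding really contains \emph{only one} knotted cycle. This requires checking every cycle in the embedding (there are many) for a nonzero second Conway coefficient, which the authors handle computationally via \emph{Plotter}; I would rely on that computation and on the remark (already in the text) that no cycle in these embeddings has more than $7$ self-crossings, so the second Conway coefficient detects all knotting. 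I would present the proof essentially in that two-line form, perhaps noting explicitly that $K_{3,2,1,1,1}$ is intrinsically knotted since it contains a $K_{3,3,1,1}$ minor (delete appropriate edges / contract), which is intrinsically knotted by Foisy \cite{fo}.
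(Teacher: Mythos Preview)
Your final two-line argument is correct and matches the paper's in structure: intrinsic knottedness gives the lower bound, and the embedding in Appendix~\ref{S:8vertexdiagrams} gives the upper bound. However, the exploratory part of your proposal contains a genuine factual error that is worth correcting, since it contradicts the paper and since $H_8$ is precisely the tool the paper uses here.

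You conclude that $K_{3,2,1,1,1}$ does \emph{not} contain $H_8$ as a subgraph. It does. The mistake is that you only tried taking one of the singleton vertices $1,2,3$ as the top vertex of $H_8$. The paper instead takes one of the vertices in the part of size two (your $d$ or $e$; the paper's $x$ or $y$) as the top vertex. With $e$ as top, the middle vertices are $\{a,b,c\}$ (mutually non-adjacent, each adjacent to $e$), and the bottom vertices are $\{1,2,3,d\}$. These four are pairwise adjacent in $K_{3,2,1,1,1}$ (they lie in four distinct parts), each is adjacent to all of $a,b,c$, and $d$ is already non-adjacent to $e$; deleting the three edges $e1,e2,e3$ makes $e$ adjacent only to $a,b,c$. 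This is exactly $H_8$. Symmetrically, taking $d$ as top gives a second copy, which is the paper's statement that ``there are two subgraphs isomorphic to $H_8$.'' (Incidentally, your search with top vertex $1$ also overlooked four-cliques such as $\{2,3,d,a\}$, though that particular choice does fail for a different reason --- the remaining three vertices $\{b,c,e\}$ do not form an independent triple.)

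So the paper establishes intrinsic knottedness of $K_{3,2,1,1,1}$ directly via $H_8$ and Lemma~\ref{L:h8}, rather than by citing \cite{bb2} or invoking a $K_{3,3,1,1}$ subgraph. Your fallback route through $K_{3,3,1,1}\subset K_{3,2,1,1,1}$ is valid and arguably more self-contained given Proposition~\ref{P:k3311}, but the $H_8$ observation is what the paper actually records, and it sets up the counting arguments in the subsequent propositions.
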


\begin{proof}
Partition the vertices of $K_{3,2,1,1,1}$ as $(abc)(xy)(1)(2)(3)$.  There are two subgraphs isomorphic to $H_8$, formed by taking either $x$ or $y$ as the top vertex.  Since $H_8$ is intrinsically knotted, so is $K_{3,2,1,1,1}$.  The embedding of $K_{3,2,1,1,1}$ in Appendix \ref{S:8vertexdiagrams} has exactly one knotted cycle, so $mnk(K_{3,2,1,1,1}) = 1$.
\end{proof}

\begin{prop} \label{P:k221111}
$mnk(K_{2,2,1,1,1,1}) = 2$
\end{prop}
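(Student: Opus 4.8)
The plan is to establish a matching lower and upper bound. The upper bound $mnk(K_{2,2,1,1,1,1}) \leq 2$ comes for free from the embedding exhibited in Appendix \ref{S:8vertexdiagrams}, which (we claim) contains exactly two knotted cycles. So the real work is the lower bound $mnk(F) \geq 2$ for every embedding $F$. First I would partition the vertices as $(ab)(cd)(1)(2)(3)(4)$ and catalogue the subgraphs isomorphic to $H_8$: choosing any one of the four singleton vertices, say $1$, as the top vertex of an $H_8$ requires three mutually non-adjacent vertices all adjacent to $1$; the candidates for the middle vertices are pulled from $\{a,b\}$, $\{c,d\}$, and the remaining singletons $\{2,3,4\}$, and one checks that such triples exist (e.g. $(1^*)(a c 2)(b)(d)(3)(4)$ gives a valid $H_8$ with bottom vertices $b,d,3,4$). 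This produces a supply of $H_8$-subgraphs; by Lemma \ref{L:h8}, each contains a required knot of length $7$ or $8$, where the length-$7$ knot is missing exactly one bottom vertex and is otherwise forced to contain all four bottom vertices of that copy of $H_8$.

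The heart of the argument is to show that one cannot have a single knotted cycle that serves as the required knot for \emph{all} the relevant $H_8$-subgraphs — forcing at least two distinct knotted cycles. Here I would exploit the fact that, by Lemma \ref{L:h8}, the required knot in a given $H_8$ either is an $8$-cycle (Hamiltonian in that $H_8$, hence containing all of $\{a,b,c,d,1,2,3,4\}$ minus the top vertex — wait, an $8$-cycle in $H_8$ uses all $8$ vertices) or is a $7$-cycle containing all four bottom vertices. I would pick two copies of $H_8$ with \emph{different} top vertices, say top $=1$ and top $=2$, arranged so that their bottom-vertex sets differ; a required length-$7$ knot for the first copy omits the top vertex $1$, but a required knot for the second copy (if length $7$) must contain all of \emph{its} bottom vertices, which includes $1$. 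By choosing the two $H_8$'s carefully so that no single cycle of length $7$ or $8$ can simultaneously satisfy both containment constraints, I force two distinct knots. The combinatorial bookkeeping is to verify that the four (or more) available $H_8$-subgraphs genuinely cannot all be satisfied by one cycle; most likely two well-chosen copies suffice, with the incidence constraints on top vertices versus required bottom vertices doing all the work.

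The main obstacle I anticipate is precisely this last step: ruling out a single "universal" knotted cycle. A length-$8$ cycle uses all eight vertices, so it automatically contains every bottom vertex of every $H_8$; the danger is an embedding whose only knot is one knotted Hamiltonian $8$-cycle which happens to be the required knot for every copy of $H_8$. To exclude this I would need a second, independent source of knots — for instance, observing that $K_{2,2,1,1,1,1}$ contains a subgraph isomorphic to $K_7$ (it does: delete one vertex from each of the two size-two parts, leaving six singletons, which span $K_6$ — not enough; so instead one must use an $H_8$-type or $K_{3,3,1,1}$-type argument). The cleanest route is likely: take two disjoint-in-their-top-vertex copies of $H_8$ and show a Hamiltonian $8$-cycle cannot be the required knot for one of them because Lemma \ref{L:h8}'s construction, when the $K_7$-knot passes through two of the edges $ab,ac,bc$, produces a $7$-cycle omitting a \emph{middle} vertex of that $H_8$ — and by choosing the copies so their middle-vertex roles conflict, we get the needed distinctness. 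I would close by confirming the Appendix embedding has exactly two knots, completing the equality.
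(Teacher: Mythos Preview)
Your overall plan matches the paper's: the upper bound comes from the embedding in Appendix \ref{S:8vertexdiagrams}, and the lower bound comes from finding enough $H_8$-subgraphs that no single knotted cycle can serve as the required knot in all of them. Your handling of the $7$-cycle case is essentially correct: a $7$-cycle omits exactly one vertex of $K_{2,2,1,1,1,1}$, and one can always exhibit an $H_8$-subgraph in which that omitted vertex sits among the four bottom vertices, so by Lemma \ref{L:h8} the $7$-cycle cannot be the required knot there.

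The genuine gap is in your treatment of the $8$-cycle case. You correctly identify the obstacle --- a knotted Hamiltonian cycle contains every vertex, hence every bottom vertex of every $H_8$ --- but your proposed resolution misfires. Appealing to the internal mechanism of the proof of Lemma \ref{L:h8} (``when the $K_7$-knot passes through two of the edges $ab,ac,bc$ \dots'') does not help: the lemma only guarantees that each $H_8$ has \emph{some} required knot, and you have no control over which case of that proof occurs in any particular copy. The real obstruction is about \emph{edges}, not vertices. For a fixed $8$-cycle $C$ to be the required knot of a given $H_8$-subgraph, $C$ must be a cycle \emph{in that subgraph}, i.e.\ every edge of $C$ must already be an edge of that copy of $H_8$. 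Since the three middle vertices of $H_8$ are mutually non-adjacent, any edge of $C$ joining two vertices that are both assigned the middle role is absent from that copy, and $C$ is simply not a cycle there.

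The paper exploits this directly. With vertices partitioned as $(ab)(xy)(1)(2)(3)(4)$, it takes the top vertex from $\{a,b,x,y\}$, the other size-two part as two of the middle vertices, and one of $1,2,3,4$ as the third middle vertex, giving $16$ copies of $H_8$. Any $8$-cycle $C$ must contain an edge from some $u\in\{a,b,x,y\}$ to some $i\in\{1,2,3,4\}$ (otherwise the cycle would split into two pieces). If, say, $u=a$ and $i=1$, then in the copy $(x^*)(ab1)(y)(2)(3)(4)$ both $a$ and $1$ are middle vertices, the edge $a1$ is absent, and $C$ is not a cycle of that $H_8$. That edge-based observation is the missing idea in your argument; once you have it, two knotted cycles are forced and the embedding in the appendix finishes the proof.
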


\begin{proof}
Partition the vertices of $K_{2,2,1,1,1,1}$ as $(ab)(xy)(1)(2)(3)(4)$, where $a$ and $b$ are not adjacent, and $x$ and $y$ are not adjacent.  Then there are 16 subgraphs isomorphic to $H_8$:  the top vertex can be any of $a,b,x,y$, and once that choice is made, we choose one of $1,2,3,4$ to be the third middle vertex.  For example, one such subgraph is $(a)(xy1)(b)(2)(3)(4)$ (where $a$ is the top vertex).

Each of these subgraphs contains a knotted 7-cycle containing all four bottom vertices, or a knotted 8-cycle.  If a 7-cycle $C$ is the required knot in one subgraph, then whichever vertex is missed by $C$ is a bottom vertex in a different subgraph, and must be part of the required knot in that subgraph.  Therefore, a single knotted 7-cycle cannot be the required knot in all 16 subgraphs.  Also, a knotted 8-cycle cannot appear in all 16 subgraphs.  To see this, observe that one of $a,b,x,y$ must be adjacent to one of $1,2,3,4$ in the 8-cycle.  Without loss of generality, assume that $a$ is adjacent to $1$.  But then this cycle does not appear in the subgraph $(x)(ab1)(y)(2)(3)(4)$ (where $x$ is the top vertex).  So no 8-cycle can appear in all 16 subgraphs.

Hence, $K_{2,2,1,1,1,1}$ must contain at least two knotted cycles.  The embedding in Appendix \ref{S:8vertexdiagrams} contains exactly two knotted cycles, so $mnl(K_{2,2,1,1,1,1}) = 2$.
\end{proof}

\begin{prop} \label{P:k311111}
$3 \leq mnk(K_{3,1,1,1,1,1}) \leq 4$
\end{prop}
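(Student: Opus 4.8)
The plan is to prove both bounds by the same subgraph-counting technique used for $K_{2,2,1,1,1,1}$, now applied to the intrinsically knotted graph $K_{3,1,1,1,1,1}$. First I would partition the vertices as $(abc)(1)(2)(3)(4)(5)$, where $a,b,c$ are mutually non-adjacent. The upper bound $mnk \leq 4$ is established simply by exhibiting the embedding in Appendix \ref{S:8vertexdiagrams} and counting that it contains exactly 4 knotted cycles; this requires no argument here beyond the observation that the second coefficient of the Conway polynomial detects all knots in that diagram (no cycle has more than 7 self-crossings). The real work is the lower bound $mnk \geq 3$.

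For the lower bound I would first locate copies of $H_8$ inside $K_{3,1,1,1,1,1}$. Choosing any one of $1,\dots,5$ to be the top vertex of valence 3 and any one of the remaining four singleton vertices to be the third middle vertex (joined to $a,b,c$) yields a subgraph isomorphic to $H_8$: for instance $(1^*)(ab2)(c)(3)(4)(5)$, except one must check the valence conditions carefully, since in $K_{3,1,1,1,1,1}$ the vertex playing the ``third middle'' role must be non-adjacent to the top vertex — and here all of $1,\dots,5$ are mutually adjacent, so this does \emph{not} work directly. Instead the correct copies of $H_8$ use $a,b,c$ as the three middle vertices and one of $1,\dots,5$ as the top vertex: $(i^*)(abc)(j)(k)(\ell)(m)$ where $\{i,j,k,\ell,m\}=\{1,2,3,4,5\}$. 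This gives exactly 5 subgraphs isomorphic to $H_8$, one for each choice of top vertex. By Lemma \ref{L:h8}, each such subgraph contains a \textbf{required} knot: a knotted 7-cycle containing all four of its bottom vertices, or a knotted 8-cycle.

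The combinatorial heart of the argument is then to show that no single knotted cycle, and no pair of knotted cycles, can serve as the required knot for all 5 of these $H_8$-subgraphs. For a knotted 8-cycle in $K_{3,1,1,1,1,1}$: since the 8-cycle uses all 8 vertices and alternates appropriately, at least one of $1,\dots,5$ is adjacent in the cycle to another of $1,\dots,5$; say $i$ is adjacent to $j$ in the cycle. Then this cycle is not a cycle of the $H_8$-subgraph with top vertex $i$ (since $i$ has valence 3 there, adjacent only to $a,b,c$), nor of the one with top vertex $j$; so a single 8-cycle covers at most 3 of the 5 subgraphs. For a knotted 7-cycle: the required 7-cycle in the $H_8$ with top vertex $i$ omits $i$ and contains all of $\{1,\dots,5\}\setminus\{i\}$ together with $a,b,c$ — but a 7-cycle on 7 of the 8 vertices omits exactly one vertex, so a fixed knotted 7-cycle can be the required knot for at most one $H_8$-subgraph (the one whose top vertex is the omitted vertex), unless the omitted vertex lies in $\{a,b,c\}$, in which case it is the required knot for \emph{no} subgraph. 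Combining: each knotted cycle of $F$ accounts for the required knot of at most 3 subgraphs (at most 1 if it is a 7-cycle, at most 3 if it is an 8-cycle), so two knotted cycles cover at most 6 — but I need a sharper count, since $5 \le 6$. The main obstacle, and where I would spend the most care, is tightening this: I expect to argue that two 8-cycles cannot between them cover all 5 subgraphs because of overlap constraints on which adjacent pairs $ij$ they can ``kill,'' or alternatively to bring in a second family of subgraphs (copies of $K_7$ obtained by deleting one of $1,\dots,5$, or copies of $H_8$ with a different vertex partition) to force a third knot. Once the covering bound rules out 1 and 2 knots, we get $mnk(K_{3,1,1,1,1,1}) \geq 3$, and with the Appendix embedding this completes the proof that $3 \leq mnk(K_{3,1,1,1,1,1}) \leq 4$.
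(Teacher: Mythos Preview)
Your setup is correct: the five $H_8$-subgraphs $(i^*)(abc)(\ldots)$ for $i \in \{1,\ldots,5\}$ are exactly the ones the paper uses, and your conclusion that a knotted 7-cycle can serve as the required knot in at most one of them is right (though your side claim that it serves in \emph{none} when the omitted vertex lies in $\{a,b,c\}$ is slightly off --- such a cycle can still be the required knot in the unique $H_8$ whose top vertex sits between the two remaining middle vertices on $C$; this doesn't affect the ``at most one'' bound).

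The genuine gap is your 8-cycle bound. You show only that a single 8-cycle misses at least two of the five $H_8$'s, giving ``at most 3,'' and you correctly diagnose that this is too weak since $2 \cdot 3 = 6 \ge 5$. Your proposed repairs do not work as stated: there are no $K_7$-subgraphs of $K_{3,1,1,1,1,1}$ (any seven vertices include at least two of $a,b,c$, which are non-adjacent), and there are no further $H_8$-subgraphs either, since $\{a,b,c\}$ is the only triple of mutually non-adjacent vertices available to play the middle. The ``overlap constraints'' idea is left too vague to assess.

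The paper closes this gap with one sharper observation: a knotted 8-cycle can lie in at most \emph{two} of the five $H_8$'s. For if it lay in three of them, with top vertices $i_1, i_2, i_3$, then in the 8-cycle each $i_k$ would be adjacent to two of $\{a,b,c\}$; the six edges incident to $i_1, i_2, i_3$ in the cycle therefore all land in $\{a,b,c\}$. But $a,b,c$ together have exactly six incidences in an 8-cycle, so every neighbor of $a,b,c$ along the cycle lies in $\{i_1,i_2,i_3\}$, and the cycle restricted to $\{a,b,c,i_1,i_2,i_3\}$ is already $2$-regular --- a 6-cycle sitting inside an 8-cycle, which is impossible. With ``at most 2'' in hand, $\lceil 5/2 \rceil = 3$ gives the lower bound immediately, no further case analysis needed.
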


\begin{proof}
Partition the vertices of $K_{3,1,1,1,1,1}$ as $(abc)(1)(2)(3)(4)(5)$.  There are 5 subgraphs isomorphic to $H_8$, depending on which of $1,2,3,4,5$ is chosen to be the top vertex (in each case, $\{a,b,c\}$ are the middle vertices).  Each of these subgraphs contains a knotted 7-cycle (containing all four bottom vertices) or a knotted 8-cycle.  

A knotted 7-cycle can be the required knot in only one of the subgraphs.  To show this, we consider two cases.  First, let $C$ be a 7-cycle containing all three of $a, b, c$.  Then $C$ can only be the required knot in the single subgraph where it does not contain the top vertex.  In the second case, assume $C$ contains two of $a, b, c$, say $a$ and $b$.  Then there must be a vertex $v$ adjacent to both $a$ and $b$ in $C$ (or $C$ is not in any of the subgraphs), and $C$ only appears in the subgraph where $V$ is the top vertex.  In either case, $C$ can be the required knot in only one of the subgraphs.

A knotted 8-cycle can appear in at most two of the subgraphs.  To see this, suppose an 8-cycle appears in three of the subgraphs.  In each subgraph, the top vertex is adjacent to two of $a,b,c$.  Since the three subgraphs have different top vertices, this would mean the 8-cycle has three vertices, each adjacent to two of $a,b,c$.  But this forces a 6-cycle, which is a contradiction.

So a given knotted cycle can be the only knotted cycle in at most 2 of the 5 subgraphs, which means there are at least $\left\lceil \frac{5}{2} \right\rceil = 3$ knotted cycles.  The embedding shown in Appendix \ref{S:8vertexdiagrams} has 4 knotted cycles.  Hence $3 \leq mnk(K_{3,1,1,1,1,1}) \leq 4$.
\end{proof}

\begin{prop} \label{P:k2111111}
$8 \leq mnk(K_{2,1,1,1,1,1,1}) \leq 9$
\end{prop}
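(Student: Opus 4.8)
The plan is to establish the lower bound $mnk(K_{2,1,1,1,1,1,1}) \ge 8$ by a counting argument that combines subgraphs isomorphic to $K_7$ with subgraphs isomorphic to $H_8$, much as in the preceding propositions; the upper bound $\le 9$ comes from exhibiting (in Appendix \ref{S:8vertexdiagrams}) an embedding with exactly $9$ knotted cycles.

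Partition the vertices as $(ab)(1)(2)(3)(4)(5)(6)$, so that $a$ and $b$ are the only non-adjacent pair. First I would extract two knots from $K_7$ subgraphs: deleting $a$ leaves a copy of $K_7$ on $\{b,1,\dots,6\}$ and deleting $b$ leaves a copy of $K_7$ on $\{a,1,\dots,6\}$, so by Proposition \ref{P:k7} any embedding $F$ contains a knotted $7$-cycle $D_1$ that misses $a$ and a knotted $7$-cycle $D_2$ that misses $b$, and $D_1\ne D_2$. Second, I would write down a family of $30$ subgraphs isomorphic to $H_8$: for each choice of a vertex $i\in\{1,\dots,6\}$ and a vertex $v\in\{1,\dots,6\}\setminus\{i\}$, take the subgraph with middle vertices $\{a,b,i\}$, top vertex $v$, and the remaining four of $\{1,\dots,6\}$ as bottom vertices, keeping only the top--middle, middle--bottom, and bottom--bottom edges (that is, deleting from $G$ the edges $ai$, $bi$, and the four edges from $v$ to the bottom vertices; the missing edge $ab$ plays the role of the third middle--middle non-edge of $H_8$). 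By Lemma \ref{L:h8}, each of these $30$ subgraphs contains a \emph{required} knot: either a knotted $8$-cycle contained in the subgraph, or a knotted $7$-cycle contained in the subgraph that passes through all four of its bottom vertices.

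The core of the proof is to bound, for a single knotted cycle $C$ of $F$, the number of these $30$ subgraphs for which $C$ can be the required knot. I would show: (i) a knotted $7$-cycle missing $a$ (in particular $D_1$) is the required knot of exactly the (at most two) subgraphs whose top vertex is one of the two $C$-neighbors of $b$, and symmetrically for a $7$-cycle missing $b$; (ii) a knotted $7$-cycle missing one of $1,\dots,6$ is the required knot of at most three of the subgraphs; (iii) a knotted $8$-cycle is the required knot of at most five of the subgraphs. For (iii) one splits on the cyclic distance between $a$ and $b$ along $C$ -- necessarily $2$, $3$, or $4$ since $a\not\sim b$ -- and checks that the number of admissible (third-middle, top) choices is at most $5$, $2$, and $4$ in the three cases. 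Granting these bounds, $D_1$ and $D_2$ account for at most $2+2=4$ of the $30$ subgraphs, so at least $26$ of them have required knots different from $D_1$ and $D_2$, each such knot being required by at most $5$ of them; hence $F$ contains at least $\lceil 26/5\rceil = 6$ further knots and at least $6+2 = 8$ knots in all. Together with the appendix embedding this yields $8\le mnk(K_{2,1,1,1,1,1,1})\le 9$.

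I expect step (iii) to be the main obstacle: one must enumerate, for an arbitrary Hamiltonian $8$-cycle, every way it can sit inside one of the $30$ $H_8$-subgraphs with all of its edges being admissible ($H_8$-)edges, and verify the total never exceeds $5$ -- a count of $6$ would weaken the conclusion to $mnk\ge 7$. This forces careful bookkeeping of which edges of $C$ are allowed to be top--middle versus middle--bottom versus bottom--bottom, together with the casework on the location of $a$ and $b$ along $C$; the corresponding (easier) analysis for $7$-cycles in (i) and (ii) still has to be pushed far enough to make the ``at most two''/``at most three'' bounds precise.
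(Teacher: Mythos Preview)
Your argument is correct, and it follows the same overall architecture as the paper's proof (two knotted $7$-cycles from the $K_7$ subgraphs, then a covering argument over $H_8$ subgraphs), but you make a genuinely different choice in the $H_8$ step.  The paper uses all $70$ copies of $H_8$ inside $K_{2,1,1,1,1,1,1}$: your $30$ ``Type~1'' subgraphs (top vertex in $\{1,\dots,6\}$, middle set $\{a,b,i\}$) together with $40$ ``Type~2'' subgraphs where $a$ or $b$ is the top vertex and three of $\{1,\dots,6\}$ are the middle vertices.  Over this larger family the maximum number of subgraphs in which a single cycle can be the required knot is $11$ (attained by an $8$-cycle with $a,b$ at cyclic distance $2$), and each of $D_1,D_2$ covers at most $6$; the paper then computes $\lceil (70-12)/11\rceil = 6$ additional knots, matching your $\lceil (30-4)/5\rceil = 6$.

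Your restriction to Type~1 subgraphs buys a cleaner case analysis: the per-cycle bounds $2$, $3$, and $5$ (versus $6$, $5$, $11$ in the paper) come from shorter enumerations, and in particular you avoid analyzing the Type~2 subgraphs entirely.  The paper's approach, on the other hand, illustrates that the slack in the inequality is not tight against either family---both routes land on exactly $8$---so neither count is wasted.  Your casework claims $(5,2,4)$ for the three cyclic distances of an $8$-cycle and ``at most $3$'' for a $7$-cycle missing a numeric vertex check out (the latter is $3$ when $a,b$ are at cyclic distance $2$ and $1$ when they are at distance $3$), and your bound of $2$ for $D_1,D_2$ is exact.
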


\begin{proof}
Partition the vertices of $K_{2,1,1,1,1,1,1}$ as $(ab)(1)(2)(3)(4)(5)(6)$.  There are two subgraphs isomorphic to $K_7$ (formed by taking one of $a$ or $b$, with the other 6 vertices), so there are at least two knotted 7-cycles.

There are 70 subgraphs isomorphic to $H_8$, split into two types.  Type 1 subgraphs are formed by taking one of $1,2,3,4,5,6$ as the top vertex, and grouping another of these vertices with $a$ and $b$ as the middle vertices.  There are $6 \cdot 5 = 30$ subgraphs of Type 1.  Type 2 subgraphs are formed by taking one of $a$ or $b$ as the top vertex, and grouping three of $1,2,3,4,5,6$ as the middle vertices.  There are $2 \binom{6}{3} = 2 \cdot 20 = 40$ subgraphs of Type 2.  So there are a total of $30 + 40 = 70$ subgraphs isomorphic to $H_8$.

Each of these subgraphs contains a knotted 7-cycle that contains all four bottom vertices, or a knotted 8-cycle.  We first consider the knotted 7-cycles.  Let $C$ be a knotted 7-cycle; we first consider the case when $C$ contains both $a$ and $b$.  Since $a$ and $b$ cannot be adjacent, they are separated by either 1 or 2 vertices along $C$.  Without loss of generality, $C$ is either $[a2b3456]$ or $[a23b456]$.  If $C = [a2b3456]$, then it appears in 5 of the subgraphs:  two subgraphs where 1 is the top vertex, and 4 or 5 is chosen as the third middle vertex; one subgraph where 1 is the third middle vertex, and 2 is the top vertex (the top vertex must be adjacent to both $a$ and $b$); one where $a$ is the top vertex and 2, 6 and 1 are the middle vertices; and similarly one where $b$ is the top vertex.  If $C = [a23b456]$, then it appears in only three subgraphs:  one where 1 is the top vertex and $a, b, 5$ are the middle vertices, one where $a$ is the top vertex, and one where $b$ is the top vertex.

Now we consider the case when $C$ contains only one of $a$ and $b$.  Without loss of generality, say that $C = [a123456]$.  Then $C$ appears in 2 subgraphs where $a$ and $b$ are middle vertices, and the top vertex is either 1 or 6 (the third middle vertex is either 2 or 5, respectively).  $C$ also appears in 4 subgraphs where $b$ is the top vertex, $a$ is a bottom vertex, and the middle vertices are three of 1, 2, 3, 4, 5, 6 which are non-adjacent in $C$ ($\{1,3,5\}$, $\{1,3,6\}$, $\{1,4,6\}$ or $\{2,4,6\}$).  So in this case $C$ appears in at most 6 subgraphs isomorphic to $H_8$.

Now let $C$ be a knotted 8-cycle in a subgraph isomorphic to $H_8$.  Then $C$ contains both $a$ and $b$, but they are not adjacent in the cycle.  So $a$ and $b$ are separated by one, two or three vertices in $C$.  Without loss of generality, $C = [a1b23456], [a12b3456]$, or $[a123b456]$.  It is not hard to check that in the first case $C$ appears in 11 of the subgraphs isomorphic to $H_8$, and in the latter two cases $C$ appears in 8 of the subgraphs (the details are left as an exercise for the reader).

The two 7-cycles coming from the $K_7$ subgraphs each appear in at most 6 of the $H_8$ subgraphs.  The remaining $70 - 2*6 = 58$ $H_8$ subgraphs each contain a knotted cycle, but each cycle can be the required knot in at most 11 of the subgraphs.  So there are at least $\left\lceil \frac{58}{11} \right\rceil = 6$ different knotted cycles, besides the two arising from the $K_7$ subgraphs.  So there are at least 8 knotted cycles in $K_{2,1,1,1,1,1,1}$.  The embedding in Appendix \ref{S:8vertexdiagrams} has exactly 9 knotted cycles, so $mnk(K_{2,1,1,1,1,1,1}) = 8$ or $9$.
\end{proof}

\begin{prop} \label{P:k8}
$15 \leq mnk(K_8) \leq 29$
\end{prop}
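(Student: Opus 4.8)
The plan is to prove the two bounds by quite different means. The upper bound $mnk(K_8)\le 29$ comes from an explicit embedding: the diagram of $K_8$ in Appendix~\ref{S:8vertexdiagrams} has no cycle with more than seven self-crossings, so the second coefficient of the Conway polynomial detects all of its knotted cycles, and a count (carried out with \emph{Plotter}) gives exactly $29$ of them. So for this half I would simply record that computation.

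For the lower bound I would fix a minimal knot embedding $F$ of $K_8$ and write $a$ for the number of knotted Hamiltonian ($8$-)cycles and $b$ for the number of knotted $7$-cycles in $F$; since distinct knotted cycles are counted separately in $nk(F)$, it suffices to show $a+b\ge 15$. Two families of subgraphs feed the count. First, $K_8$ has $\binom{8}{7}=8$ subgraphs isomorphic to $K_7$, one for each omitted vertex, and each contains a knotted $7$-cycle by Proposition~\ref{P:k7}; a knotted $7$-cycle uses exactly seven vertices, hence lies in a unique such subgraph, so these eight $7$-cycles are distinct and $b\ge 8$. Second, $K_8$ has $\binom{8}{1}\binom{7}{3}=280$ subgraphs isomorphic to $H_8$: pick a top vertex, pick three of the remaining seven as middle vertices, and let the other four be bottom vertices; deleting the four top--bottom edges and the three middle--middle edges of $K_8$ yields precisely $H_8$. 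By Lemma~\ref{L:h8}, each of these $280$ subgraphs contains a required knot --- a knotted $8$-cycle, or a knotted $7$-cycle containing all four of its bottom vertices.

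The core of the argument is a double count of the pairs (copy of $H_8$, its required knot). Every required knot is one of the $a$ knotted $8$-cycles or one of the $b$ knotted $7$-cycles of $F$, so I would bound, for a fixed knotted cycle $C$, the number of copies of $H_8$ in which $C$ can be the required knot. If $C$ is a knotted $8$-cycle: for $C$ to lie in a copy of $H_8$, the two edges of $C$ at the top vertex must go to middle vertices and the three middles must be pairwise non-consecutive along $C$; choosing the top vertex ($8$ choices) forces its two $C$-neighbours to be middles and leaves exactly $3$ admissible choices for the third middle, so $C$ serves in at most $8\cdot 3=24$ copies. If $C$ is a knotted $7$-cycle missing the vertex $w$: either $w$ is the top vertex, and the three middles must form an independent triple in the $7$-cycle ($7$ choices), or some vertex of $C$ is the top, which forces $w$ to be the third middle and the top's two $C$-neighbours to be the other two middles ($7$ choices, one per vertex of $C$; note that if $w$ were a bottom vertex then $C$ would omit a bottom vertex and could not be a required knot). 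Hence $C$ serves in at most $7+7=14$ copies. Therefore $280\le 24a+14b$, and combining with $b\ge 8$,
$$24(a+b) \;=\; (24a+14b)+10b \;\ge\; 280+80 \;=\; 360,$$
so $a+b\ge 15$ and $mnk(K_8)\ge 15$.

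I expect the bookkeeping in the two preimage counts to be the main obstacle: one must keep straight exactly which edges survive when $H_8$ sits inside $K_8$ (the top is joined only to the middles, and the middles are mutually non-adjacent), and must carry out the case analysis showing that a knotted $7$-cycle missing one bottom vertex cannot be a required knot, since this is what keeps the constants $14$ and $24$ small enough for the arithmetic to close. It is worth noting that knotted cycles of length at most $6$ only increase $nk(F)$, so restricting attention to $7$- and $8$-cycles loses nothing.
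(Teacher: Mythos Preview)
Your argument is correct and follows essentially the same approach as the paper: both proofs use the eight $K_7$ subgraphs to get $b\ge 8$, the $280$ copies of $H_8$ together with Lemma~\ref{L:h8}, and the counts $14$ and $24$ for how many $H_8$'s a given $7$- or $8$-cycle can serve. The only cosmetic difference is the final arithmetic---the paper subtracts $8\cdot 14=112$ from $280$ and then divides by $24$, while you combine $280\le 24a+14b$ with $b\ge 8$ via $24(a+b)\ge 280+10b\ge 360$---but the content is identical.
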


\begin{proof}
$K_8$ has 8 subgraphs isomorphic to $K_7$, so any embedding contains at least 8 knotted 7-cycles.  There are also $8\binom{7}{3} = 280$ subgraphs isomorphic to $H_8$.  A given 7-cycle can be the required knot in 14 of the $H_8$ subgraphs:  either the top vertex is the vertex not in the 7-cycle, and there are 7 choices of three vertices which are mutually non-adjacent in the 7-cycle as the middle vertices; or one of middle vertices is the vertex not in the 7-cycle, the top vertex is any of the 7 vertices in the cycle, and the other middle vertices are the vertices in the cycle adjacent to the top vertex.  A given 8-cycle appears in 24 of the $H_8$ subgraphs (8 choices for the top vertex, and 3 choices for the three mutually non-adjacent vertices which are adjacent to the top vertex).  So the 8 knotted 7-cycles from the $K_7$ subgraphs can account for the required knotted cycles in at most $14\cdot 8 = 112$ of the $H_8$ subgraphs, leaving $280 - 112 = 168$ other $H_8$ subgraphs.  A given knotted cycle can account for at most 24 of these subgraphs, so there are at least $\left\lceil \frac{168}{24}\right\rceil = 7$ additional knotted cycles.  So an embedding of $K_8$ contains at least 15 knotted cycles.  The embedding in Appendix \ref{S:8vertexdiagrams} contains 29 knotted cycles (8 knotted 7-cycles and 21 knotted 8-cycles).
\end{proof}


\newpage
\appendix

\section{Minimum linking number embeddings of complete partite graphs with 9 vertices} \label{S:9vertexdiagrams}

    \begin{figure} [htpb]
    $$\scalebox{1}{\includegraphics{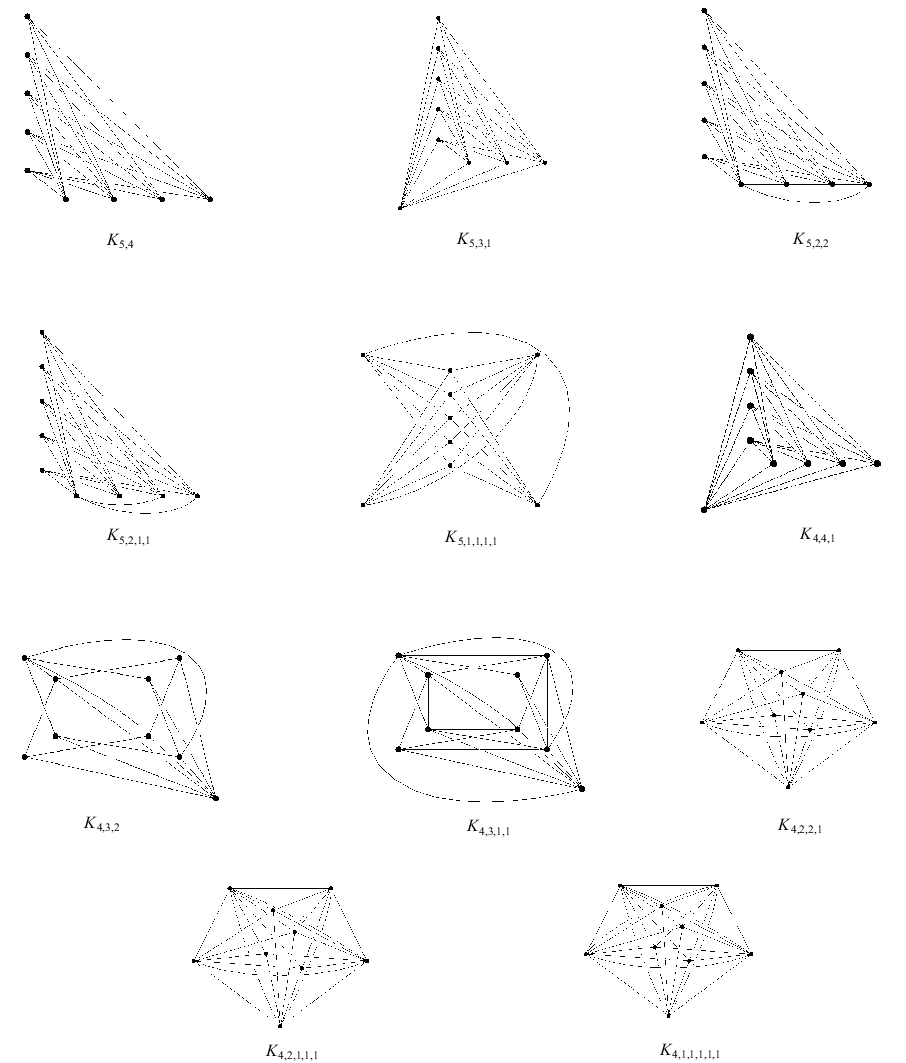}}$$
    \caption{Complete Partite Graphs with 9 Vertices} \label{F:9vertex1}
    \end{figure} 

    \begin{figure} [htpb]
    $$\scalebox{.9}{\includegraphics{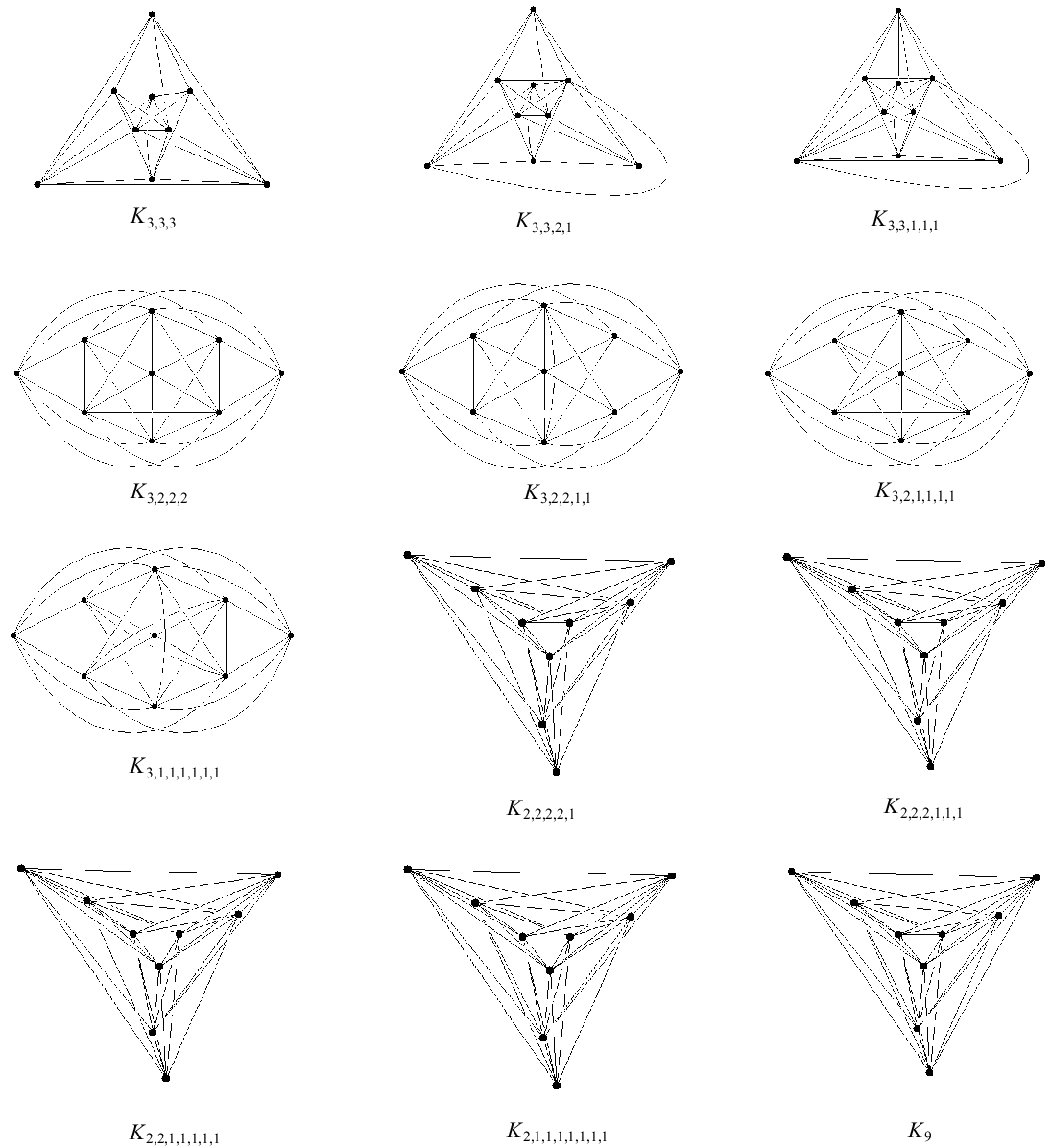}}$$
    \caption{More Complete Partite Graphs with 9 Vertices} \label{F:9vertex2}
    \end{figure} 

\section{Minimum knotting number embeddings of complete partite graphs with 8 vertices} \label{S:8vertexdiagrams}

    \begin{figure} [htpb]
    $$\scalebox{.8}{\includegraphics{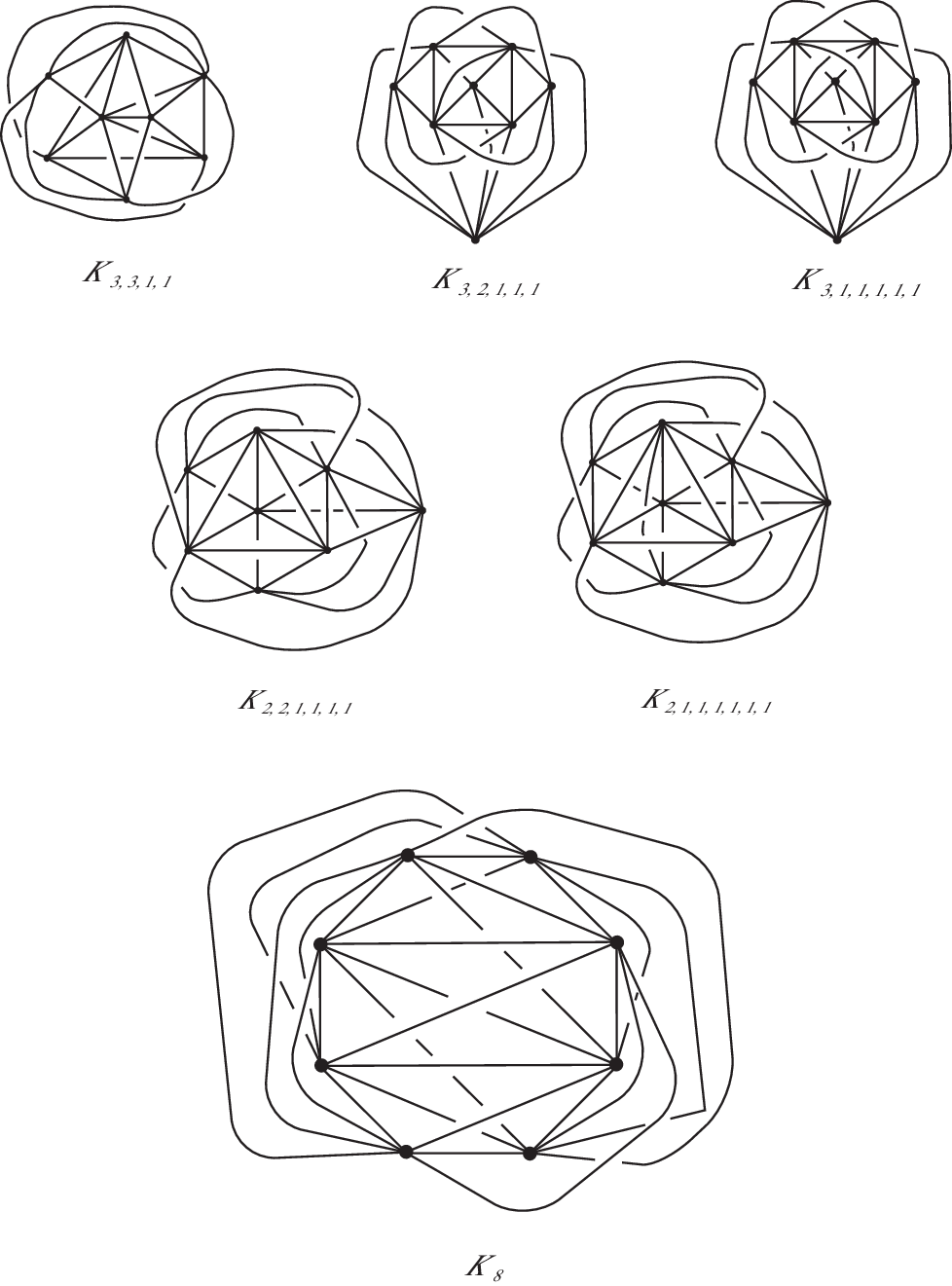}}$$
    \caption{Intrinsically knotted complete partite graphs on 8 vertices} \label{F:8vertex}
    \end{figure} 
    
\section{Procedures for computing the number of links and knots in a spatial graph} \label{S:code}

In this section we will describe the algorithms used to count the number of links and knots in a spatial graph.  The full program is available at {\it http://myweb.lmu.edu/bmellor/research/Gordian}.

\subsection{Representing the graph and finding the cycles} \label{SS:represent}

The first task is to represent a particular graph embedding in a form usable by the program.  The user supplies the number of vertices in the graph, the edges of the graph, and information about each crossing in the graph (the details of the input format are described in the user's manual at {\it http://myweb.lmu.edu/bmellor/research/Gordian}).  The vertices of the graph are numbered from $0$ to $n-1$ (for a graph with $n$ vertices), and the abstract graph is stored as an adjacency matrix (i.e. an array) \texttt{int[n][n] adjacent}, where \texttt{adjacent[i][j]} contains the value 1 if vertices \texttt{i} and \texttt{j} are connected by an edge, and the value 0 otherwise.

We also need to record the details of the particular spatial embedding of the graph; specifically, how the edges are crossing each other.  We assume each edge is oriented from its smaller endpoint to its larger endpoint (i.e. if \texttt{i} and \texttt{j} are adjacent vertices with $i < j$, then the edge is oriented from \texttt{i} to \texttt{j}).  If there are $k$ crossings, then the crossing information is contained in another array \texttt{int[k][7] crossings}.  For crossing \texttt{i} we have: \begin{itemize}
	\item \texttt{crossings[i][0]} and \texttt{crossings[i][1]} are the endpoints of the edge that crosses {\it over} the other edge.
	\item \texttt{crossings[i][2]} and \texttt{crossings[i][3]} are the endpoints of the edge that crosses {\it under} the other edge.
	\item \texttt{crossings[i][4]} gives the order of the crossing among all the crossings along the over-crossing edge (following the orientation of the edge).
	\item \texttt{crossings[i][5]} gives the order of the crossing among all the crossings along the under-crossing edge.
	\item \texttt{crossings[i][6]} gives the sign of the crossing ($+1$ or $-1$).
\end{itemize}

We are interested in counting the knotted and linked cycles in the graph, so the next step is to generate all the cycles in the graph.  This is done by going through all possible cycles (i.e. all cycles in the complete graph on the given vertices), and then removing any cycles which contain an edge not contained in the adjacency matrix for the graph.  Each cycle is given an (arbitrary) orientation determined by the order of the vertices; this may not agree with the default orientation on each edge.

\subsection{Counting linked cycles} \label{SS:link}

To count the linked cycles, the program computes the linking number for each pair of disjoint cycles.  This requires counting the crossings between each pair of edges (with sign).  For efficiency, the program first extracts this data from the \texttt{crossings} array, constructing an array \texttt{int[n][n][n][n] crossingMatrix}, where \texttt{crossingMatrix[a][b][c][d]} is the sign of the crossing between edges $(a,b)$ and $(c,d)$, oriented from $a$ to $b$ and from $c$ to $d$.  So \texttt{crossingMatrix[b][a][c][d]} is the negative of \texttt{crossingMatrix[a][b][c][d]}, and so forth.

The program then compares each cycle to every other cycle, and follows the following procedure for each pair of cycles: \begin{enumerate}
	\item If the cycles share any vertices, then the pair is not disjoint; move on to the next pair.
	\item Compare each edge of the first cycle to each edge of the second cycle, and add up their crossing numbers (from \texttt{crossingMatrix}); use the orientation for each edge induced by the orientation of the cycle.
	\item The sum is the linking number.  If the linking number is nonzero, add this pair to the list of links, and increase the number of links by 1.
	\item Move on to the next pair of cycles.
\end{enumerate}

The result is a list of all pairs of cycles with non-trivial linking number, and the number of such pairs.

\subsection{Counting knotted cycles} \label{SS:knot}

The program identifies knotted cycles using the second coefficient of the Conway polynomial, $a_2$.  To compute $a_2$, we apply the skein relation:
$$a_2(K_+) - a_2(K_-) = \lk(L_0)$$
where $K_+$, $K_-$ and $L_0$ are identical except in a neighbourhood of a single crossing, where they differ as shown in Figure~\ref{F:skein}.  Observe that if $K_+$ and $K_-$ are knots (differing by a single crossing change), then $L_0$ is a link of two components.

\begin{figure} [htpb]
    $$\scalebox{1}{\includegraphics{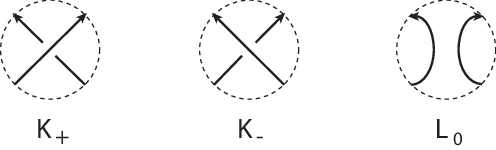}}$$
    \caption{The diagrams $K_+$, $K_-$ and $L_0$.} \label{F:skein}
    \end{figure} 

It is well known that given a diagram of an oriented knot $K$ and a starting point $p$ on the knot, we can change $K$ to a diagram of the unknot by changing crossings so that as we traverse $K$, starting at $p$, the first time we encounter each crossing we cross {\em over} the other strand.  Since $a_2({\rm unknot}) = 0$, this means we can find a sequence of knots $K= K_0, K_1, K_2, \dots, K_m = {\rm unknot}$ such that $K_i$ and $K_{i+1}$ differ by a single crossing change.  So:
$$a_2(K_i) = a_2(K_{i+1}) \pm \lk(L_i) \implies a_2(K) = \sum_{i = 0}^{m-1} \epsilon_i \lk(L_i)$$
where $\epsilon_i = \pm 1$.

This provides a procedure for computing $a_2$ for each cycle in the spatial graph.  For efficiency, we begin by constructing \texttt{boolean[n][n][n][n] overMatrix}, where \texttt{overMatrix[a][b][c][d]} is \texttt{true} if edge $ab$ crosses over edge $cd$, and \texttt{false} otherwise.  Similarly, we construct a \texttt{crossingOrderMatrix} which records for each edge the other edges that cross it, and their order.  Now, for each cycle, we proceed as follows: \begin{enumerate}
	\item Begin traversing the cycle.  At each edge, go through the crossing for that edge (from the \texttt{crossingOrderMatrix}), and determine if any are places where the cycle crosses itself.  If there is a self-crossing, check whether it is an over- or under-crossing, and whether it has been encountered before.  If it is an over-crossing, or has been encountered, move on to the next crossing.
	\item If the crossing is an under-crossing that has not been encountered, change it (temporarily) to an over-crossing.  Then use the order of the crossings along the edges to construct the two links in $L_i$, and compute their linking number as described in Section \ref{SS:link}.  Add the result (with appropriate sign) to the running total for $a_2$.
	\item When all the crossings have been considered, check whether $a_2 = 0$.  If it is non-zero, then the cycle is a knot, and we add it to our list of knotted cycles, and increase the total number of knots by 1.
	\item Undo any changes made to the \texttt{crossingMatrix}, \texttt{overMatrix}, \texttt{crossingOrderMatrix}, etc.  Then move on to the next cycle.
\end{enumerate}

The result is a list of cycles that have non-zero $a_2$, and the number of such cycles.

\small

\normalsize

\end{document}